\newcommand{\QQ}{\mathbb{Q}}
\newcommand{\PP}{\mathbb{P}}
\newcommand{\ZZ}{\mathbb{Z}}
\newcommand{\CA}{\mathcal{A}}
\newcommand{\CB}{\mathcal{B}}
\newcommand{\CO}{\mathcal{O}}
\newcommand{\CX}{\mathcal{X}}
\newcommand{\CY}{\mathcal{Y}}
\newcommand{\CZ}{\mathcal{Z}}
\newcommand{\alb}{{\rm alb}}
\newcommand{\Pic}{{\mathrm{Pic}}}
\newcommand{\rounddown}[1]{{\lfloor #1 \rfloor}}
\begin{document}
	
	\title[Slope for fibered varieties over curves]{Fibered varieties over curves with low slope and sharp bounds in dimension three}
	
	\author{Yong Hu}
	\author{Tong Zhang}
	\date{\today}
	
	\address[Y.H.]{School of Mathematics, Korea Institute for Advanced Study, 85 Hoegiro, Dongdaemun-gu, Seoul 02455, South Korea }
	\email{yonghu11@kias.re.kr}
	
	\address[T.Z.]{Department of Mathematics, Shanghai Key Laboratory of PMMP, East China Normal University, 500 Dongchuan Road, Shanghai 200241, People's Republic of China}
	\email{tzhang@math.ecnu.edu.cn, mathtzhang@gmail.com}
	
	\begin{abstract}
		In this paper, we first construct varieties of any dimension $n>2$ fibered over curves with low slopes. These examples violate the conjectural slope inequality of Barja and Stoppino \cite{Barja_Stoppino_1}. 
		
		Led by their conjecture, we focus on finding the lowest possible slope when $n=3$. Based on a characteristic $p > 0$ method, we prove that the sharp lower bound of the slope of fibered $3$-folds over curves is $4/3$, and it occurs only when the general fiber is a $(1, 2)$-surface. Otherwise, the sharp lower bound is $2$. We also obtain a Cornalba-Harris-Xiao type slope inequality for families of surfaces of general type over curves, and it is sharper than all known results with no extra assumptions.
		
		As an application of the slope bound, we deduce a sharp Noether-Severi type inequality that $K_X^3 \ge 2\chi(X, \omega_X)$ for an irregular minimal $3$-fold $X$ of general type not having a $(1,2)$-surface Albanese fibration. It answers a question in \cite{Zhang} and thus completes the full Severi type inequality for irregular $3$-folds of general type.
	\end{abstract}
	
	\maketitle
	
	\tableofcontents

	
	\theoremstyle{plain}
	\newtheorem{theorem}{Theorem}[section]
	\newtheorem{lemma}[theorem]{Lemma}
	\newtheorem{coro}[theorem]{Corollary}
	\newtheorem{prop}[theorem]{Proposition}
	\newtheorem{defi}[theorem]{Definition}
	\newtheorem{ques}[theorem]{Question}
	\newtheorem{conj}[theorem]{Conjecture}
	
	\newtheorem*{conj*}{Conjecture}
	
	\theoremstyle{remark}
	\newtheorem{remark}[theorem]{Remark}
	\newtheorem{assumption}[theorem]{Assumption}
	\newtheorem{example}[theorem]{Example}
	
	\numberwithin{equation}{section}
	

	\section{Introduction}
	\subsection{Motivation and the first result}
	
	In \cite[Theorem 1.1]{Cornalba_Harris}, Cornalba and Harris proved that given a surjective flat projective morphism $f: V \to T$ of relative dimension $d$ and a relatively very ample line bundle $L$ on $X$,  the divisor
	$$
	f_*(rc_1(L) - f^*c_1(f_*L))^{d+1}
	$$
	is pseudo-effective on $T$, provided that the linear system associated to $L$ on a general fiber yields a semi-stable embedding in the sense of GIT, where the \emph{embedding} assumption was later weakened by Stoppino \cite{Stoppino}. Here $r = \mbox{rank} f_*L$. Motivated by this result, in a series of recent papers \cite{Barja_Stoppino_1,Barja_Stoppino_2,Barja_Stoppino_3}, Barja and Stoppino introduced and investigated a closely related notion called $f$-positivity. The precise definition is as follows:
	
	\begin{defi} \cite[Definition 3]{Barja_Stoppino_1}
		Given a fibered $n$-dimensional variety $f: X \to B$ over a smooth curve $B$ with general fiber $F$ smooth and a line bundle $L$ on $X$, we say that $L$ is \emph{$f$-positive} if the following inequality holds:
		$$
		L^n \ge n \frac{(L|_F)^{n-1}}{h^0(F, L|_F)} \deg f_* L.
		$$
	\end{defi}
	
	In the case when $F$ is of general type and $L = \omega_{X/B}$, the $f$-positivity of $\omega_{X/B}$ attracts more attentions. The corresponding inequality
	\begin{equation} \label{slopeinequality}
	    K_{X/B}^n \ge n \frac{K_F^{n-1}}{p_g(F)} \deg f_* \omega_{X/B}
	\end{equation}
	is usually called the \emph{slope inequality}, and this has been proved by Cornalba-Harris \cite[Theorem 1.3]{Cornalba_Harris} and Xiao \cite[Theorem 2]{Xiao} when $n=2$.
	
	Based on the Cornalba-Harris method in \cite{Cornalba_Harris}, Barja and Stoppino made the following conjecture:
	
	\begin{conj} \label{conjecture} \cite[Conjecture 1]{Barja_Stoppino_1}
		Let $f: X \to B$ be a fibered $n$-dimensional variety whose relative canonical sheaf $\omega_{X/B}$ is relatively nef and ample on  general fibers, and whose general fibers have sufficiently mild singularities. Then $f$ satisfies the slope inequality \eqref{slopeinequality}.
	\end{conj}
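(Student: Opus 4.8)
Because the displayed statement is Barja and Stoppino's \emph{conjecture} rather than a theorem the paper affirms, a proof proposal really splits into an attempt and its anticipated failure. The natural first move is to try to push the two known proofs of the $n=2$ case to all $n$. Following Cornalba and Harris one would take $L=\omega_{X/B}^{\otimes m}$ for $m\gg 0$, try to establish GIT-semistability of the $m$-canonical image of the general fiber $F$, and then feed the resulting pseudo-effectivity of $f_*\bigl(r\,c_1(L)-f^*c_1(f_*L)\bigr)^{n}$ into an intersection-theoretic bookkeeping that, after dividing out the powers of $m$, should reproduce the coefficient $n\,K_F^{n-1}/p_g(F)$. Alternatively, following Xiao, I would base-change to reduce to $B=\PP^1$, form the Harder--Narasimhan filtration of $f_*\omega_{X/B}$, and bound $K_{X/B}^n$ from below by summing the positive contributions of the successive slopes.

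The hard part --- and the reason I expect the conjecture to be \emph{false} --- is that the semistability hypothesis, automatic for canonically embedded curves in the $n=2$ Cornalba--Harris argument, has no free analogue for canonical models of fibers of dimension $\ge 2$, and the positivity of the Hodge-type bundle $f_*\omega_{X/B}$ simply does not control $K_{X/B}^n$ with the predicted coefficient. That coefficient $n\,K_F^{n-1}/p_g(F)$ is sensitive to how far the canonical system of $F$ is from an embedding: for fibers with small $p_g$ whose canonical map is only a pencil, one should be able to undercut it. This is exactly the scenario flagged in the abstract, so the substantive task is to build explicit families realizing the violation.

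Concretely, I would search for fibrations $f:X\to B$ whose general fiber $F$ is an $(n-1)$-fold of general type with the smallest available ratio $K_F^{n-1}/p_g(F)$ and a degenerate (pencil-type) canonical map --- for $n=3$ the $(1,2)$-surfaces, with $K_F^2=1$ and $p_g(F)=2$, are the natural candidates, since the conjecture there demands slope $\ge 3/2$ while these admit a genus-two canonical pencil. Realizing $F$ as a weighted hypersurface or a cyclic cover lets me write $X$ down as a family of such objects over $B$, vary the moduli so that the Hodge bundle $f_*\omega_{X/B}$ acquires large degree, and then compute both $K_{X/B}^n$ (via the projection formula on the ambient projective bundle together with the adjunction and ramification formulas) and $\deg f_*\omega_{X/B}$ in closed form. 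The main obstacle is to do this while genuinely respecting the hypotheses of the conjecture: the total space must be mild enough that $K_{X/B}^n$ is computed by the nef relative canonical class with no singularity corrections that would secretly restore the inequality, yet the family must still force $\deg f_*\omega_{X/B}$ to be large. Balancing these opposing requirements is precisely what locates the extremal slope, which in dimension three should come out to $4/3<3/2$, witnessed by $(1,2)$-surface fibers.
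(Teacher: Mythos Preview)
The statement is a \emph{conjecture}, and the paper does not prove it; it disproves it (Theorem~\ref{main}). You correctly recognize this and frame your ``proof proposal'' as a strategy for producing counterexamples. That strategic orientation is exactly right, and your expectation that the violation in dimension three is witnessed by $(1,2)$-surface fibers with slope $4/3$ is precisely what the paper establishes in Section~\ref{(1,2) example} (Proposition~\ref{example}), via a Kobayashi-style double cover of an iterated $\PP^1$-bundle followed by passage to the relative minimal model.

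However, the paper's \emph{primary} disproof (Section~\ref{proof}) is different from what you propose and is worth noting. Rather than aiming at fibers with minimal $K_F^{n-1}/p_g(F)$ and a pencil-type canonical map, the paper takes $Z=A\times B$ with $A$ an abelian $(n-2)$-fold, forms a $\PP^1$-bundle $Y\to Z$, and passes to a double cover $X\to Y$. The fibers here have \emph{large} invariants and, in the variant of \S\ref{Further remark} with $A=\PP^1$, the canonical map of $F$ is generically finite --- so your heuristic that the violation should require a degenerate (pencil) canonical system is not the mechanism the paper exploits. The point is rather that one can compute $K_{X/B}^n$, $K_F^{n-1}$, $p_g(F)$, and $\deg f_*\omega_{X/B}$ in closed form via the double-cover and projection formulas and then check the inequality by hand; the resulting numerics already fail \eqref{slopeinequality} for every $n>2$. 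What this construction buys over your $(1,2)$-surface approach is uniformity in $n$ and a counterexample even when the canonical map of the fiber is finite onto its image; what your approach buys (and what Section~\ref{(1,2) example} carries out) is the identification of the \emph{sharp} minimal slope in dimension three.
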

	
	In dimension $n>2$, there are some partial results. In \cite[Theorem 3]{Barja_Stoppino_1}, Barja and Stoppino studied this conjecture when $f_* \omega_{X/B}$ is semi-stable. In \cite{Barja_Stoppino_2,Barja_Stoppino_3}, they further studied this conjecture when $X$ is a relative hypersurface or a relative complete intersection. On the other hand, as is known in \cite{Cornalba_Harris} (see also \cite[\S 4.2]{Barja_Stoppino_1}), Conjecture \ref{conjecture} holds provided that the canonical map of the general fiber is Hilbert-Chow semi-stable.
	
	The first result of this paper is the following theorem.
	
	\begin{theorem} \label{main}
		For any given integers $n > 2$ and $g \ge 0$, there exists a fibration $f: X \to B$ such that
		\begin{itemize}
			\item[(1)] $X$ is smooth of dimension $n$, $B$ is a smooth curve of genus $g$, and the general fiber $F$ of $f$ is smooth;
			
			\item[(2)] the relative dualizing sheaf $\omega_{X/B}$ is ample, and thus $\omega_F$ is ample;
			
			\item[(3)] the following inequality holds:
			$$
			K_{X/B}^n < n \frac{K_F^{n-1}}{p_g(F)} \deg f_* \omega_{X/B}.
			$$
		\end{itemize}
		In particular, \eqref{slopeinequality} does not hold for $f$.
	\end{theorem}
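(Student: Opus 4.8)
My plan is to isolate a single base case in relative dimension two and then raise the dimension by repeatedly taking products with a fixed curve, the point being that multiplying by a curve preserves a violation of the slope inequality. Precisely, suppose $f\colon X\to B$ is a fibration as in the theorem in dimension $n$ (so $X$ smooth, $F$ the smooth general fiber, $\omega_{X/B}$ ample) which violates \eqref{slopeinequality}, and let $C$ be any fixed smooth curve of genus $\gamma\ge 2$. Set $X':=X\times C$ and let $f'\colon X'\to B$ be the composition $f\circ\mathrm{pr}_X$, so that $f'$ has smooth connected general fiber $F'=F\times C$ of dimension $n$. I claim $f'$ again satisfies (1)--(3) in dimension $n+1$ and violates \eqref{slopeinequality}.

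To verify this I would use $\omega_{X'/B}=\mathrm{pr}_X^*\omega_{X/B}\otimes\mathrm{pr}_C^*\omega_C$ together with the vanishing $(\mathrm{pr}_C^*\omega_C)^2=0$ in the relevant intersection products. The resulting identities are
\[
K_{X'/B}^{n+1}=(n+1)(2\gamma-2)\,K_{X/B}^n,\qquad \deg f'_*\omega_{X'/B}=\gamma\,\deg f_*\omega_{X/B},
\]
\[
K_{F'}^{n}=n(2\gamma-2)\,K_F^{n-1},\qquad p_g(F')=\gamma\,p_g(F).
\]
Substituting these into the slope inequality for $f'$, the common positive factor $(n+1)(2\gamma-2)$ cancels, and one finds that \eqref{slopeinequality} holds for $f'$ if and only if it holds for $f$; in particular $f'$ violates it exactly when $f$ does. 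Ampleness of $\omega_{X'/B}$ is immediate from the product structure: $\omega_{X'/B}=\omega_{X/B}\boxtimes\omega_C$ is a box product on $X\times C$, hence ample because each factor is ample (as $\gamma\ge 2$ and $\omega_{X/B}$ is ample by hypothesis). Thus the whole theorem reduces to producing a single base case in dimension $n=3$.

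For the base case I would take the general fiber $F$ to be a minimal $(1,2)$-surface, i.e. $K_F^2=1$ and $p_g(F)=2$, so that the conjectural slope bound equals $3K_F^2/p_g(F)=3/2$; it then suffices to build a threefold fibration $f\colon X\to B$ over the given genus-$g$ curve with $\omega_{X/B}$ ample, $\deg f_*\omega_{X/B}>0$, and relative slope $K_{X/B}^3/\deg f_*\omega_{X/B}<3/2$. I would construct $X$ as a relative weighted double cover modelled on the description of a $(1,2)$-surface as the degree-$10$ hypersurface $\{z^2=f_{10}(x_0,x_1,y)\}$ in $\mathbb{P}(1,1,2,5)$: one lets the weighted coordinates $x_0,x_1,y,z$ be sections of suitable line bundles on $B$, forms the associated relative weighted projective bundle, and takes $X$ to be the relative hypersurface cut out by a general $f_{10}$. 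The standard double-cover and adjunction formulas then express $K_{X/B}^3$ and $\deg f_*\omega_{X/B}$ as explicit functions of the chosen twisting degrees, which I would tune to push the slope below $3/2$ (the sharp value being $4/3$, as the later analysis of the optimal bound shows).

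The inductive step is routine, so the entire difficulty sits in this base case. The hard part will be twofold: first, choosing the twisting parameters so that the relative slope genuinely drops below $3/2$ — this is where one must exploit the degeneracy of the canonical system of a $(1,2)$-surface (which is only a pencil) to gain extra positivity in $\deg f_*\omega_{X/B}$ relative to $K_{X/B}^3$; and second, checking that $\omega_{X/B}$ is ample on the total space rather than merely nef, which forces one to control the singularities of the branch divisor and to keep the twists positive. Once one such threefold is in hand, the product-with-a-curve step furnishes examples in every dimension $n>2$ over a base curve of arbitrary genus $g$, completing the proof.
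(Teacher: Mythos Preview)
Your inductive step is correct and is a genuinely different route from the paper. With $X'=X\times C$ and $\gamma\ge 2$, the identities you list are right, and the factor $(n+1)(2\gamma-2)$ does cancel, so a violation of \eqref{slopeinequality} is preserved; ampleness of $\omega_{X'/B}=\omega_{X/B}\boxtimes\omega_C$ is clear. By contrast, the paper builds the example directly in every dimension: one takes $Z=A\times B$ with $A$ an abelian variety of dimension $n-2$, forms the $\PP^1$-bundle $Y=\PP(\CO_Z\oplus\CO_Z(-2D))$, and lets $X$ be a double cover of $Y$ branched along two disjoint smooth members of $|H|$ and $|5H+10p^*D|$. Then $K_{X/B}=\pi^*(H+3p^*D)$ is the pullback of an ample class under a finite map, so $\omega_{X/B}$ is visibly ample, and all invariants are computed by elementary projection formulae (Propositions~\ref{fiberdata}--\ref{degree}). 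The paper's approach is uniform in $n$ and makes the ampleness check immediate; your approach is modular and reduces everything to a single $3$-fold.

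The gap in your proposal is the base case: you have only outlined it, and the specific $(1,2)$-surface model you suggest is more delicate than necessary. A relative degree-$10$ hypersurface in a twisted $\PP(1,1,2,5)$-bundle must avoid the relative singular locus to keep $X$ smooth, and the ampleness of $\omega_{X/B}$ (as opposed to mere relative nefness) has to be checked by hand---the Kobayashi-type construction in \S\ref{(1,2) example}, for instance, starts from a non-nef $K_{X'}$ and only yields $K_X$ ample after a contraction; whether $K_{X/B}$ is ample there requires an extra argument. You have not supplied the numerics either, so the claimed slope $<3/2$ is not yet established. A cleaner fix is to observe that the $n=3$ case of the paper's own construction (with $A$ an elliptic curve) already furnishes a base case over any $B$: there $\omega_{X/B}$ is ample for the reason above, and one computes $K_{X/B}^3/\deg f_*\omega_{X/B}=52/10<6=3K_F^2/p_g(F)$. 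Feeding that into your product-with-a-curve step completes your argument in all dimensions.
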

    
    As a result, Conjecture \ref{conjecture} does not hold true. Actually, we construct two counterexamples to the conjecture. In the second one (given in \S \ref{Further remark}), the canonical map of $F$ is finite onto its image. This shows that Conjecture \ref{conjecture} is not even true under this stronger additional assumption.
	
	\subsection{Slope inequality for fibered $3$-folds over curves}
	The above negative answer to Conjecture \ref{conjecture} naturally prompts the following question:
	
	\begin{ques} \label{question}
		Let $f: X \to B$ be a relatively minimal fibration from an $n$-fold $X$ to a curve $B$ such that the general fiber is of general type and that $\deg f_* \omega_{X/B} > 0$. What is the sharp lower bound of $K_{X/B}^n / \deg f_* \omega_{X/B}$?
	\end{ques}
	
	When $n=2$, the above question is known to have an answer. In fact, \eqref{slopeinequality} shows that 
	$$
	\frac{K_{X/B}^2}{\deg f_* \omega_{X/B}} \ge 2.
	$$ 
	Moreover, it is known that if the equality holds and $f$ is not locally trivial, then $f$ is a fibration of genus $2$ curves. However, Question \ref{question} is widely open for any $n > 2$. As is pointed to us by Zuo, such a sharp bound has a deep connection to the Arakelov (in)equality and the canonical class inequality for families of higher dimensional varieties (see \cite{Viehweg_Zuo,Lu_Tan_Zuo} for details). For families of curves, such a connection has been studied even in positive characteristics (see \cite{Lu_Sheng_Zuo} for example).
	
	There are some partial results towards Question \ref{question} in dimension three. For example, Ohno \cite{Ohno} has systematically studied the lower bound of the slope for fibered $3$-folds over curves using Xiao's method. Better bounds using the same method but with extra assumptions have also been obtained by Barja \cite{Barja} afterwards. However, as has been observed by Ohno himself (see \cite[page 645]{Ohno}), when his lower bound is attained, the corresponding fibration \emph{has to be isotrivial}. This particularly means that for a general fibration, results loc. cit. are by no means sharp. 
	
	Conjecture \ref{conjecture} formulated by Barja and Stoppino also suggests a lower bound. By the Noether inequality $K_F^2 \ge 2p_g(F) - 4$ for the general fiber $F$, we know that ${K_F^2}/{p_g(F)} \ge \frac{1}{2}$. Therefore, Conjecture \ref{conjecture} predicts that the lower bound for $n=3$ is at least $\frac{3}{2}$. 
	
	As the second part in this paper, we give a detailed answer to Question \ref{question} when $n=3$. The following theorems are proved in characteristic zero.
	
	\begin{theorem} \label{main2}
		Let $f: X \to B$ be a relatively minimal fibration from a smooth $3$-fold $X$ to a smooth curve $B$ such that the general fiber $F$ is a surface of general type. Then we have the following sharp inequality:
		\begin{equation} \label{slope1}
		K_{X/B}^3 \ge \frac{4}{3} \deg f_* \omega_{X/B}.
		\end{equation}
		Moreover, if $\deg f_* \omega_{X/B} > 0$ and the equality holds, then $(K_F^2, p_g(F)) = (1, 2)$, i.e., $F$ is a $(1, 2)$-surface.
	\end{theorem}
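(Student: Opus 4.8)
The plan is to prove \eqref{slope1} by reduction to positive characteristic together with a Frobenius-amplification argument, recovering the sharp constant $4/3$ only in the limit. Since both sides are intersection numbers that are preserved under good reduction, after spreading $f\colon X\to B$ out over a finitely generated $\ZZ$-algebra and reducing modulo a large prime $p$, it suffices to establish for the reductions an inequality of the shape
$$
K_{X/B}^3 \;\ge\; \frac{4}{3}\deg f_*\omega_{X/B} \;-\; \frac{C}{p},
$$
with $C$ depending only on the numerical invariants of a general fiber, and then let $p\to\infty$.

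In characteristic $p$ the engine is the $e$-th relative Frobenius $F^e\colon X\to X'$ over $B$, under which the relative dualizing sheaf becomes highly divisible: its restriction to a fiber $F$ is the absolute Frobenius of the surface, finite of degree $p^{2e}$, so that sections of $\omega_F^{\otimes p^e}$ grow like $\tfrac{1}{2}p^{2e}K_F^2$. Using the Cartier operator I would show that the Frobenius-twisted relative canonical sheaves carry on each fiber at least the number of sections this growth predicts, and then feed this fiberwise count into Xiao's method applied to the Harder--Narasimhan filtration of $f_*\omega_{X/B}$. The slopes $\mu_i$ and ranks of that filtration bound $K_{X/B}^3$ from below by a weighted sum of nef self-intersections, and the Frobenius input supplies precisely the control of the moving part of the relative canonical system that the characteristic-$0$ method (as in \cite{Ohno}) lacks, which is what lets one push the constant down to its true value.

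The resulting estimate is governed by a relative Clifford--Noether inequality on the canonical system of $F$; optimizing it over the admissible pairs $(K_F^2,p_g(F))$ of a minimal surface of general type, constrained by the Noether inequality $K_F^2\ge 2p_g(F)-4$ together with $K_F^2\ge 1$, produces the universal constant $4/3$. The minimizer occurs exactly at the corner of the surface geography where the canonical map degenerates from a generically finite map to a pencil, forcing $p_g(F)=2$ and $K_F^2=1$. Tracing back the chain of equalities, namely sharpness of the fiber inequality, extremality of the Harder--Narasimhan slopes, and vanishing of the limit error term, then shows that equality in \eqref{slope1} with $\deg f_*\omega_{X/B}>0$ can hold only for a family of $(1,2)$-surfaces, while every other fiber type yields the strictly larger bound $2$.

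The main obstacle will be to make the characteristic-$p$ estimates simultaneously sharp and uniform in $p$: I must control the defect of Frobenius-semistability of $f_*\omega_{X/B}$ and the failure of Kodaira-type vanishing so that the error is genuinely $O(1/p)$ and disappears in the limit, while at the same time running the equality analysis tightly enough to exclude every general-type surface except the $(1,2)$-surface. Balancing quantitative positivity in positive characteristic against a rigid classification of the equality case is where the real work lies.
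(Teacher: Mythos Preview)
Your instinct to pass to positive characteristic and use Frobenius amplification is correct, but the mechanism you describe is not the one that actually yields the sharp constant, and the proposal as written has a genuine gap.

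First, the architecture: the paper does \emph{not} obtain $4/3$ uniformly by a single Frobenius-enhanced Xiao argument. Instead, Ohno's existing bounds already give $K_{X/B}^3\ge 2\deg f_*\omega_{X/B}$ in every case except (i) $F$ a $(1,1)$-surface, (ii) $F$ a $(1,2)$-surface, (iii) $p_g(F)=3$ with $|K_F|$ not composed with a pencil. Each of these is handled separately; only case (ii) lands at $4/3$, and the equality characterization follows because every other case is strictly above $2$. Your claim that optimizing a single relative Clifford--Noether inequality over the surface geography produces $4/3$ at the $(1,2)$ corner is not substantiated and does not match what happens.

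Second, and more seriously, the Frobenius you invoke is the wrong one. You propose the relative Frobenius $F^e\colon X\to X'$ and the Cartier operator on fibers to boost section counts and then feed these into Xiao's Harder--Narasimhan machine. The paper uses the \emph{Frobenius base change} $X_e = X\times_{B,F^e}B$ along the absolute Frobenius of the base curve; this multiplies $\deg f_*\omega_{X/B}$ by $p^e$ while keeping the fiber $F$ unchanged, and the limit $e\to\infty$ kills additive error terms. No Cartier operator or Frobenius semistability enters. As for combining with Xiao's method: the paper explicitly records (see \textsc{Table}~\ref{table}) that Xiao's method gives only $s\ge 1$ for $(1,2)$-fibers, and your proposal gives no concrete reason why the Frobenius input would lift this to $4/3$.

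The idea you are missing for the hard $(1,2)$ case is geometric, not filtration-theoretic. A $(1,2)$-surface has $|K_F|$ a pencil with a single base point, which produces a distinguished \emph{section} $\Gamma_0$ of $f$. After resolving the base locus of $|K_{X/B}|$ via Lemma~\ref{lem:resolution}, one restricts to a general surface $S$ in the movable part; the genus-$2$ curve fibration $f'|_S$ still sees $\Gamma_0$ as a section, and an adjunction computation on $\Gamma\subset S$ bounds $(\beta^*K_{X/B}|_S\cdot\Gamma)\ge \tfrac{1}{3}t$, which is exactly where the $4/3$ appears. The Frobenius base change is needed only to make this argument uniform when $g(B)\ge 2$: one tracks that $\Gamma_0$ survives unchanged under $X_e\to X$ (because it meets every fiber at a smooth point), applies the adjunction estimate on $X''_e$, and divides by $p^e$. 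Without the section $\Gamma_0$ and this adjunction step your plan has no route to the sharp constant.
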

	
	In Section \ref{(1,2) example}, we provide examples for which \eqref{slope1} becomes an equality. The construction of the examples is a generalization over base of arbitrary genus of the construction made by Kobayashi \cite[(3.2) Proposition]{Kobayashi}.  If we assume that the general fiber is not a $(1, 2)$-surface, the slope is \emph{sharply} bounded from below by $2$, which coincides with the result for surface fibrations.
	\begin{theorem} \label{main3}
		Let $f: X \to B$ be a relatively minimal fibration from a $3$-fold $X$ to a smooth curve $B$ such that the general fiber $F$ is a surface of general type. Suppose that $(K_F^2, p_g(F)) \ne (1, 2)$. Then we have the following sharp inequality:
		\begin{equation} \label{slope2}
		K_{X/B}^3 \ge 2 \deg f_* \omega_{X/B}.
		\end{equation}
	\end{theorem}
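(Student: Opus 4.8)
The plan is to run the same slope machinery that underlies Theorem \ref{main2}, but to track a lower bound for $K_{X/B}^3/\deg f_*\omega_{X/B}$ that depends only on the birational type of the general fiber $F$, and then to show that this type-dependent bound equals $4/3$ exactly for $(1,2)$-surfaces and is at least $2$ in every other case. Since Theorem \ref{main2} already provides the inequality together with the characterization of its equality case, the whole task reduces to improving the constant from $4/3$ to $2$ under the hypothesis $(K_F^2, p_g(F))\neq(1,2)$ (the case $\deg f_*\omega_{X/B}=0$ being trivial, as $K_{X/B}$ is nef by relative minimality). Following the philosophy announced in the abstract, I would carry out the estimate after spreading the fibration out over a finitely generated base and reducing modulo a prime $p\gg 0$, so that the Frobenius morphism is available throughout.

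The core estimate is Xiao's summation adapted to relative dimension two. Working on a relatively minimal model, consider the Harder--Narasimhan filtration
$$0=E_0\subsetneq E_1\subsetneq\cdots\subsetneq E_\ell=f_*\omega_{X/B},\qquad \mu_1>\mu_2>\cdots>\mu_\ell,$$
and, for each step, the nef divisor $N_i$ on $X$ cut out by the moving part of the subsheaf generated by $E_i$. Restricting to a general fiber, $N_i|_F$ spans a subsystem of $|K_F|$, and comparing $K_{X/B}^3$ with the weighted combination of the slopes $\mu_i$ and the fiberwise numbers $N_i^2\cdot F$ and $N_i\cdot K_F\cdot F$ yields an inequality of the form $K_{X/B}^3\ge \beta(F)\,\deg f_*\omega_{X/B}$. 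Here the difficulty specific to positive characteristic is that the usual semipositivity of $f_*\omega_{X/B}$ and of its Frobenius pullbacks need not hold; my approach is to replace it by a Frobenius-amplitude estimate, producing sections of $\omega_{X/B}^{\otimes m}$ directly from iterates of the relative Frobenius, so that the desired inequality holds up to an error that is $O(1/p)$ and disappears in the limit $p\to\infty$.

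It then remains to pin down $\beta(F)$, and this is where Horikawa's study of the canonical map of a surface of general type enters. The analysis splits into two cases. If the canonical image of $F$ is a surface, the positivity of that image bounds the relevant fiberwise intersection numbers from below and forces $\beta(F)\ge 2$. If instead $|K_F|$ composes with a pencil, then $f$ carries a secondary fibration by the members of that pencil, and the estimate degenerates into the two-dimensional slope inequality of Cornalba--Harris and Xiao \cite{Xiao} for the resulting family of curves; the fiber genus is at least $2$, and the value $4/3$ is produced only by the canonical genus-$2$ pencil of a $(1,2)$-surface. Excluding $(1,2)$ therefore eliminates the single configuration with $\beta(F)<2$, giving $K_{X/B}^3\ge 2\deg f_*\omega_{X/B}$, and the inequality is sharp.

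I expect the main obstacle to be the boundary analysis that secures the constant $2$ rather than some intermediate value: one must rule out, among all surfaces other than $(1,2)$, any fiber geometry that would make the weighted Xiao sum dip below $2$, and simultaneously confirm that the Frobenius-amplitude estimates degenerate to equality precisely along the extremal families so that no spuriously larger constant is forced. Controlling these two competing requirements uniformly in $p$, and checking that the limit recovers exactly the sharp bound, is the delicate technical heart of the argument.
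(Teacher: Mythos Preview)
Your outline has two concrete gaps.

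First, the dichotomy ``canonical image of $F$ is a surface'' versus ``$|K_F|$ composes with a pencil'' does not cover the case $p_g(F)\le 1$, in particular the $(1,1)$-surface fibers that are one of the three residual cases left by Ohno's version of Xiao's method (Proposition \ref{prop:general case}). For a $(1,1)$-surface, $|K_F|$ has a single member and neither branch of your argument applies. The paper treats this case by a different mechanism: it proves $2K_{X/B}^3 \ge \deg f_*\omega_{X/B}^{[2]}$ via the \emph{bicanonical} system (Lemma \ref{lem:(1,1) surface1} applied to $L = 2(K_{X/B}+f^*A)$ after mod $p$ reduction and Frobenius base change), and then feeds this into the Riemann--Roch identity
\[
\deg f_*\omega_{X/B}^{[2]} = \tfrac12 K_{X/B}^3 + 3\deg f_*\omega_{X/B} + l(2)
\]
(from \cite[Lemma 2.8]{Ohno}) to extract $K_{X/B}^3 \ge 2\deg f_*\omega_{X/B}$. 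Your plan has no analogue of this step.

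Second, your claim that the Harder--Narasimhan summation already gives $\beta(F)\ge 2$ once $(1,2)$ is excluded is precisely what is known to fail. The paper records (\textsc{Table} \ref{table}, citing \cite[Proposition 2.1, 2.6]{Ohno}) that Xiao's method yields only $s\ge 4/3$ when $p_g(F)=3$ and $K_F^2\in\{2,3\}$, and only $s\ge 1$ when $K_F^2=1$; so the method you propose is the one being improved upon, not the one doing the improving. For the $p_g(F)=3$ case the paper bypasses the HN filtration entirely and proves a direct Noether-type bound $L^3 \ge 2h^0(\Sigma,\lfloor L\rfloor)-6$ (Lemma \ref{lemma:pg32}) by case analysis on $\dim\phi_{|M|}(\Sigma')$, then applies it to $L=K_{X/B}+f^*A$ on Frobenius base changes and lets the error terms vanish as $e\to\infty$. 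Your Frobenius limiting idea is in the right spirit, but it must feed into these volume--section comparisons rather than into Xiao's weighted sum.
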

	
	In Section \ref{(2,3) example}, we provide examples of fibrations of $(2, 3)$-surfaces for which \eqref{slope2} becomes an equality.
	
	In general, we prove the following Cornalba-Harris-Xiao type slope inequality for families of surfaces of general type over curves.
	
	\begin{theorem} \label{main4}
		Let $f: X \to B$ be a relatively minimal fibration from a $3$-fold $X$ to a smooth curve $B$ such that the general fiber $F$ is a surface of general type. Then we have the following inequality:
		\begin{equation} \label{slope3}
		K_{X/B}^3 \ge \left(\frac{4K_F^2}{K_F^2 + 4} \right) \deg f_* \omega_{X/B}.
		\end{equation}
	\end{theorem}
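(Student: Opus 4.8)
The plan is to prove Theorem \ref{main4} via the positive-characteristic method advertised in the abstract, reducing the slope inequality to a statement about a single general fiber $F$ and controlling the degree of the Harder--Narasimhan filtration of $f_*\omega_{X/B}$. I will first reduce to the case where $B$ has a large positive characteristic model: by a spreading-out argument, the fibration $f\colon X\to B$ defined over $\CC$ comes from a family over a finitely generated $\ZZ$-algebra, so it suffices to prove \eqref{slope3} after base change to $\overline{\mathbb{F}}_p$ for all sufficiently large $p$. The advantage of working over $\overline{\mathbb{F}}_p$ is the availability of the Frobenius: iterating the relative Frobenius $F_{X/B}$ lets one amplify positivity, and the key quantities $K_{X/B}^3$ and $\deg f_*\omega_{X/B}$ scale in a controlled way under these Frobenius pullbacks.

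The heart of the argument is a local estimate on a general fiber. I would set up the Harder--Narasimhan filtration $0=E_0\subset E_1\subset\cdots\subset E_\ell=f_*\omega_{X/B}$ of the nef vector bundle $f_*\omega_{X/B}$ on $B$, with slopes $\mu_1>\cdots>\mu_\ell$, and then run a Xiao-type argument: choose for each graded piece a sub-linear-system $L_i$ on $X$ whose restriction to $F$ reflects the corresponding step of the canonical filtration of $\omega_F$. Writing $K_{X/B}$ as a positive combination of these $L_i$ and expanding $K_{X/B}^3$ by multilinearity, one obtains a lower bound of the form $K_{X/B}^3\ge\sum_i a_i(\mu_i-\mu_{i+1})$, where the coefficients $a_i$ are intersection numbers computed on $F$. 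The characteristic-$p$ input enters precisely here: the Frobenius-amplified positivity forces the relevant restricted linear systems on $F$ to be base-point-free enough that the intersection-theoretic coefficients $a_i$ satisfy the sharp bound governed by $K_F^2$, yielding the weight $\frac{4K_F^2}{K_F^2+4}$ after optimizing over the filtration.

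The main obstacle I anticipate is controlling the \emph{fiber-theoretic} multiplication and restriction maps so that the coefficient $\frac{4K_F^2}{K_F^2+4}$ emerges as the extremal value rather than some weaker constant. Concretely, one must understand how the image of $\mathrm{Sym}^2 H^0(F,\omega_F)\to H^0(F,\omega_F^{\otimes 2})$, and more generally the canonical ring of $F$ in low degrees, interacts with the sub-bundles $E_i$; the Noether-type inequalities relating $h^0(F,\omega_F^{\otimes m})$ to $K_F^2$ must be fed in with the correct multiplicities. In characteristic $p$ this requires care because the usual Kodaira-vanishing and Clifford-type bounds can fail, so the reduction to large $p$ is essential precisely to recover enough vanishing and to ensure the general fiber in characteristic $p$ inherits the geometry of the characteristic-zero fiber. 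I expect the function $x\mapsto\frac{4x}{x+4}$ to arise as the solution of the optimization problem balancing the contribution of $K_F^2$ against the number of filtration steps needed to generate $\omega_F$, and verifying that this optimum is actually attained by an admissible filtration will be the delicate point.
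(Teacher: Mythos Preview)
Your reduction modulo $p$ and use of iterated Frobenius base change are exactly right, and the paper does this too. But the heart of your proposal --- running a Xiao-type argument on the Harder--Narasimhan filtration of $f_*\omega_{X/B}$ after Frobenius pullback --- is a wrong turn. The paper in fact records (see Table~\ref{table} and the discussion around it) that Xiao's method applied to $3$-fold fibrations yields only the weaker bound $\frac{4(p_g(F)-2)}{p_g(F)}$, which never exceeds $\frac{4K_F^2}{K_F^2+4}$ and is strictly smaller unless $F$ is an even Horikawa surface. Your hope that Frobenius pullback would sharpen the Xiao coefficients is unfounded: the base change $X_e=X\times_{F^e}B$ does not alter the general fiber $F$, so the restricted linear systems on $F$ and the intersection-theoretic coefficients $a_i$ in Xiao's estimate are exactly the same before and after Frobenius. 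The Frobenius only rescales the slopes $\mu_i$ by $p^e$, and since Xiao's inequality is already homogeneous in these, nothing is gained.

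What the paper does instead is replace the HN filtration entirely by a single Noether-type inequality on the total space (Lemma~\ref{lemma:pg31}): for any nef $\QQ$-divisor $L$ on a smooth $3$-fold fibered over a curve with $L|_F\sim K_F$ and $F$ not a $(1,2)$-surface,
\[
h^0(\Sigma,\lfloor L\rfloor)\;\le\;\Bigl(\tfrac{1}{4}+\tfrac{1}{K_F^2}\Bigr)L^3+\tfrac{K_F^2+4}{2}.
\]
One applies this to (a resolution of) the Frobenius base change with $L=\pi^*K_{X_0/B_0}+\tilde f^*A_0$; under the $e$-th Frobenius, both $L^3$ and $h^0(B,f_{e*}\omega_{X_e/B})\approx\deg f_{e*}\omega_{X_e/B}$ scale like $p^e$, while the additive constant $\tfrac{K_F^2+4}{2}$ and the Riemann--Roch discrepancy $h^0-\deg$ stay bounded. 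Dividing by $p^e$ and letting $e\to\infty$ kills the error terms and yields $K_{X/B}^3\ge\bigl(\tfrac14+\tfrac{1}{K_F^2}\bigr)^{-1}\deg f_*\omega_{X/B}$, which is precisely \eqref{slope3}. So the constant $\tfrac{4K_F^2}{K_F^2+4}$ is not the solution of an optimization over filtration steps; it is simply the reciprocal of the leading coefficient in a $3$-fold Noether inequality, and the role of Frobenius is purely to erase the additive error term. The $(1,2)$-surface case, excluded from Lemma~\ref{lemma:pg31}, is handled separately (Lemma~\ref{lemma:charp (1,2)} or the stronger bound $4/3>4/5$ from Theorem~\ref{main2}).
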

	
	The inequality \eqref{slope3} is a natural generalization of the Cornalba-Harris-Xiao slope inequality \eqref{slopeinequality} for surface fibrations. Recall that \eqref{slopeinequality} for a relatively minimal surface fibration $f: S \to B$ with general fiber $F$ a smooth curve of genus $g \ge 2$ can be reformulated as
	$$
	K_{S/B}^2 \ge \frac{4g-4}{g} \deg f_* \omega_{S/B} = \left( \frac{4 \deg K_F}{\deg K_F + 2} \right) \deg f_* \omega_{S/B}.
	$$
	Actually, it holds in arbitrary characteristic (see \cite{Bost} for example). Thus \eqref{slope3} is of the same type. One notable feature is that \eqref{slope3} also holds in positive characteristics (see Theorem \ref{slopecharp}).
	
	All the above theorems directly follow from Proposition \ref{prop:general case}, \ref{prop:(1,2) surf b0}, \ref{prop:(1,2) surf b1}, \ref{prop:(1,2) surf b2} and \ref{prop:pg3}. In the following, we give an explicit comparison between our results and the previous lower bounds obtained based on Xiao's method.
	
	\begin{table}[!htbp]
		\begin{tabular}{|c|c|c|c|}
			\hline 
			$K_F^2$ & $p_g(F)$ & Previous bound & Our bound \\
			\hline 
			$=1$ & $\le 2$ & $s \ge 1$ & $s \ge \frac{4}{3}$ \\
			\hline
			$=2$ & $\le 3$ & $s \ge \frac{4}{3}$ & $s \ge 2$ \\
			\hline
			$=3$ & $\le 3$ &$s \ge \frac{4}{3}$ & $s \ge 2$ \\
			\hline
			$\ge 4$ & $\le \frac{K^2_F}{2} + 2 $& $s \ge \frac{4(p_g(F)-2)}{p_g(F)}$ & $s \ge \frac{4K_F^2}{K_F^2 + 4}$ \\
			\hline
		\end{tabular}
		\smallskip
		\caption{Lower bound of the slope $s: = \frac{K_{X/B}^3}{\deg f_* \omega_{X/B}}$ } \label{table}
	\end{table}
	
	The third column ``Previous bound" in \textsc{Table} \ref{table} are straightforward computations from \cite[Proposition 2.1, 2.6]{Ohno}. By the Noether inequality, it is easy to check that
	$$
	\frac{4K_F^2}{K_F^2 + 4} \ge \frac{4(p_g(F)-2)}{p_g(F)},
	$$
	and the equality holds if and only if $K_F^2 = 2p_g(F) - 4$. Therefore,  results in this paper have improved all known results that
	have no extra requirements, with only one possible exception when $F$ is an even Horikawa surface with $p_g(F) \ge 4$ for which two results coincide.
	
	The lower bound of the slope can be further improved if extra assumptions on fibers are added. This has been studied in details by Barja in \cite{Barja}. Among others, he showed that for $p_g(F) \gg 0$, the lower bound of the slope is $9 - O(\frac{1}{p_g(F)})$ under the additional assumptions that the canonical map of $F$ is generically finite and that $F$ has no pencils of curves with gonality less than five [loc. cit., Theorem 0.3]. This bound is not implied by Theorem \ref{main4}. Notice that the above extra assumptions are essential and cannot be removed. Otherwise the lower bound of the slope is no bigger than $\frac{26}{5} - O(\frac{1}{p_g(F)})$. See the example in \S \ref{Further remark}.
	
	\subsection{An application to the geography of irregular $3$-folds}
	For an irregular  minimal surface $S$ of general type, it is well known, at least due to Bombieri \cite[Lemma 14]{Bombieri} (see \cite[Proposition 2.3.2]{Lopes_Pardini} also), that we have the following Noether type inequality
	$$
	K_S^2 \ge 2 \chi(S, \omega_S).
	$$
	Moreover, this inequality is sharp. If the equality holds, by a result of Debarre \cite[Th\'eor\`eme 6.1]{Debarre} that $K_S^2 \ge 2h^0(S, K_S)$, we know that $h^1(S, \CO_S) = 1$. This is one of the most fundamental results in the geography of irregular surfaces of general type. We refer the reader to the survey \cite{Lopes_Pardini} for details regarding this topic.
	
	Recently, the geography of $3$-folds of general type draws lots of attentions. For example, for Gorenstein minimal $3$-folds of general type, the sharp Noether inequality between $K^3$ and $p_g$ and the Noether type inequality between $K^3$ and $\chi$ have been established by Chen and Chen \cite{Chen_Chen} and the first named author \cite{Hu}, respectively. For irregular minimal $3$-folds $X$ of general type, let $d$ denote the Albanese dimension of $X$. The Severi type inequality that 
	\begin{equation} \label{severi}
		K_X^3 \ge 2d! \chi(X, \omega_X)
	\end{equation}
	for $d=2$ and $d=3$ has also been established in \cite[Corollary B, Example 2.4]{Barja_Severi} and \cite[Theorem 1.3]{Zhang} (see \cite{Barja_Severi,
		Zhang_small_volume} for more general results). However, the existence of above inequality for $d=1$, posted as a question in \cite[Conjecture 1.4]{Zhang}, is still very unclear. 
	
	As an application of Theorem \ref{main3}, we verify the remaining unknown case of the Severi type inequality \eqref{severi} by setting up the following theorem.
	
	\begin{theorem} \label{main5}
		Let $X$ be an irregular minimal $3$-fold of general type over an algebraically closed field of characteristic zero. Suppose that the Albanese fiber of $X$ is not a $(1, 2)$-surface. Then we have the following sharp inequality:
		$$
		K^3_X \ge 2 \chi(X, \omega_X).
		$$
		Moreover, if the equality holds, then $h^1(X, \CO_X) = 1$. 	
	\end{theorem}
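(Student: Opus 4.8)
The plan is to deduce the inequality from the slope bound of Theorem~\ref{main3}, after reducing to the case where $X$ is fibered over a curve by its Albanese map. First, if $\chi(X,\omega_X)\le 0$ the assertion is trivial, since $K_X^3>0$ for a minimal $3$-fold of general type; so I may assume $\chi(X,\omega_X)>0$. If the Albanese dimension $d$ of $X$ is $\ge 2$, then the already established Severi inequalities \cite{Barja_Severi,Zhang} give $K_X^3\ge 2\cdot d!\,\chi(X,\omega_X)\ge 2\chi(X,\omega_X)$, so the only remaining case is $d=1$. Here the Albanese map factors, via Stein factorization, through a fibration $f\colon X\to B$ onto a smooth curve $B$ whose image generates $\mathrm{Alb}(X)$; in particular $g:=g(B)\ge 1$. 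A general fiber $F$ is the Albanese fiber, and easy addition forces $\kappa(F)=2$, so $F$ is a surface of general type, by hypothesis not a $(1,2)$-surface. Since $K_X$ is nef, $K_{X/B}=K_X-f^*K_B$ is $f$-nef, so $f$ is relatively minimal and Theorem~\ref{main3} yields $K_{X/B}^3\ge 2\deg f_*\omega_{X/B}$. I also record that every global $1$-form on $X$ restricts to zero on the contracted fibers $F$ and hence lies in $f^*\Omega^1_B$, which gives $q(X)\le g$; combined with the general inequality $q(X)\ge g$ this yields $q(X)=g$.

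Next I translate $K_X^3$ and $\chi(X,\omega_X)$ into relative invariants. Since $(f^*K_B)^2=0$ and $K_{X/B}|_F=\omega_F$, expanding $K_X=K_{X/B}+f^*K_B$ gives $K_X^3=K_{X/B}^3+3(2g-2)K_F^2$. For the Euler characteristic I use the Leray spectral sequence together with relative duality for the relative dimension $2$ morphism $f$: one has $R^2f_*\omega_{X/B}\cong\CO_B$ and $R^1f_*\omega_{X/B}\cong(R^1f_*\CO_X)^\vee$, and the ranks of $R^0,R^1,R^2$ are $p_g(F),\,q(F),\,1$. Applying Riemann--Roch on $B$ to each $R^if_*\omega_{X/B}\otimes\omega_B$ and alternating the signs produces $\chi(X,\omega_X)=\deg f_*\omega_{X/B}-\deg R^1f_*\omega_{X/B}+(g-1)\chi(F,\CO_F)$. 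The positivity input I need is that $\deg R^1f_*\omega_{X/B}\ge 0$, which follows from the semipositivity of the higher direct images of the relative dualizing sheaf; this gives $\chi(X,\omega_X)\le\deg f_*\omega_{X/B}+(g-1)\chi(F,\CO_F)$.

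Combining the two, I obtain $K_X^3-2\chi(X,\omega_X)\ge\big(K_{X/B}^3-2\deg f_*\omega_{X/B}\big)+2(g-1)\big(3K_F^2-\chi(F,\CO_F)\big)$. The first bracket is $\ge 0$ by Theorem~\ref{main3}, and in the second, $g\ge 1$ while the Noether inequality $K_F^2\ge 2p_g(F)-4$ forces $\chi(F,\CO_F)\le\tfrac12 K_F^2+3<3K_F^2$ for every surface of general type other than a $(1,2)$-surface (the borderline $K_F^2=1,\ p_g(F)=2,\ q(F)=0$ being exactly the excluded case). Hence both summands are $\ge 0$ and $K_X^3\ge 2\chi(X,\omega_X)$. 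For the moreover part, equality forces both summands to vanish; since $3K_F^2-\chi(F,\CO_F)>0$ strictly, this is possible only if $g=1$, and then $h^1(X,\CO_X)=q(X)=g=1$.

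The step demanding the most care---and the main obstacle---is the semipositivity statement $\deg R^1f_*\omega_{X/B}\ge 0$ in this singular setting: $X$ is only a minimal, hence terminal (so canonical), $3$-fold, and I must invoke a form of Fujita--Kollár semipositivity for higher direct images of relative dualizing sheaves valid under morphisms from varieties with canonical singularities, while checking that relative duality and the identification $R^2f_*\omega_{X/B}\cong\CO_B$ remain available there. A secondary point is to confirm that the fibration arising from the Albanese map genuinely meets the hypotheses of Theorem~\ref{main3}, in particular that it is relatively minimal and that its general fiber is of general type and not a $(1,2)$-surface.
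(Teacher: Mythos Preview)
Your proposal is correct and follows essentially the same route as the paper's proof: reduce to Albanese dimension one, apply Theorem~\ref{main3} to the fibration $f\colon X\to B$, expand $K_X^3$ and $\chi(X,\omega_X)$ in relative invariants, and conclude via $3K_F^2-\chi(F,\omega_F)>0$ for $F$ not a $(1,2)$-surface. The only cosmetic difference is that where you derive the inequality $\chi(X,\omega_X)\le\deg f_*\omega_{X/B}+(g-1)\chi(F,\CO_F)$ by hand from Leray, relative duality, and semipositivity of $R^1f_*\omega_{X/B}$, the paper simply cites \cite[Lemma~2.4,~2.5]{Ohno} for the equivalent statement; your concern about semipositivity in the terminal setting is exactly what those lemmas package.
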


    It is easy to see that Theorem \ref{main5} is almost compatible with the Noether type result that we have mentioned before for surfaces. Thus we call it a \emph{Noether-Severi type inequality}. The only difference from the surface case is that, we have to exclude the case when the Albanese fiber of $X$ is a $(1, 2)$-surface, because there does exist a counterexample in this case for which $K_X^3 = \frac{4}{3} \chi(X, \omega_X) < 2 \chi(X, \omega_X)$. See Remark \ref{remark: (1,2) surface}. It is worth mentioning here that in \cite{Barja_Severi,Barja_Pardini_Stoppino}, a very similar set of inequalities are called Clifford-Severi inequalities.
    
    To finish this subsection, we summarize the full Severi type inequality for irregular $3$-folds of general type as follows.
    
    \begin{coro}  [Full Severi type inequality in dimension three] \label{full_severi}
    	Let $X$ be an irregular and minimal $3$-fold of general type over an algebraically closed field of characteristic zero. Then the following inequality
    	$$
    	K_X^3 \ge 2(\alb\dim(X))! \chi(X, \omega_X)
    	$$
    	holds, with the only exception when $\alb\dim(X) = 1$ and a general Albanese fiber is a $(1, 2)$-surface.
    \end{coro}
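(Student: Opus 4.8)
The plan is to argue by a trichotomy on the Albanese dimension $d := \alb\dim(X)$. Since $X$ is irregular, its Albanese map is non-constant, so $d \ge 1$; and $d \le \dim X = 3$, whence $d \in \{1, 2, 3\}$. In each case the desired inequality $K_X^3 \ge 2d!\,\chi(X, \omega_X)$ specializes, respectively, to $K_X^3 \ge 2\chi(X, \omega_X)$, to $K_X^3 \ge 4\chi(X, \omega_X)$, and to $K_X^3 \ge 12\chi(X, \omega_X)$, and I would establish these one at a time.

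The cases $d = 2$ and $d = 3$ require no new work and are quoted from the literature: for the maximal Albanese dimension $d = 3$ the bound $K_X^3 \ge 12\chi(X, \omega_X)$ is \cite[Theorem 1.3]{Zhang}, while for $d = 2$ the bound $K_X^3 \ge 4\chi(X, \omega_X)$ is \cite[Corollary B, Example 2.4]{Barja_Severi}. Since the hypotheses of the corollary (irregular, minimal, of general type, over an algebraically closed field of characteristic zero) are exactly those under which \eqref{severi} was proved in these two cases, I would simply invoke those results.

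The only genuinely new input is the case $d = 1$. Here the Albanese map factors through a fibration $f : X \to B$ onto a smooth curve $B$ of genus $q(X) = h^1(X, \CO_X) \ge 1$ (taking the Stein factorization of $X \to \mathrm{Alb}(X)$ if the Albanese image is singular), whose general fiber $F$ is a surface. By the easy addition inequality $\kappa(X) \le \kappa(F) + \dim B$ together with $\kappa(X) = 3$ and $\dim F = 2$, one gets $\kappa(F) = 2$, so $F$ is automatically of general type; thus $f$ is precisely a fibration of the type treated in Theorem \ref{main5}. Provided $F$ is not a $(1,2)$-surface, Theorem \ref{main5} yields $K_X^3 \ge 2\chi(X, \omega_X) = 2\cdot 1!\,\chi(X, \omega_X)$, which is the asserted bound. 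The remaining possibility, that the general Albanese fiber is a $(1,2)$-surface, is exactly the exception singled out in the statement, and it is genuinely unavoidable, as the counterexample in Remark \ref{remark: (1,2) surface} demonstrates.

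The main obstacle is concentrated entirely in this last case: the proof of Theorem \ref{main5} rests on the sharp slope inequality of Theorem \ref{main3}, namely $K_{X/B}^3 \ge 2\deg f_* \omega_{X/B}$ for the Albanese fibration, which is where the substantive characteristic $p$ argument is carried out. Once Theorem \ref{main5} is available, assembling the corollary is a routine case analysis, and the three cases glue together exactly into the uniform bound $K_X^3 \ge 2(\alb\dim(X))!\,\chi(X, \omega_X)$ with the single stated exception.
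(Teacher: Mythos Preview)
Your proposal is correct and follows exactly the approach implicit in the paper: the corollary is a direct assembly of Theorem \ref{main5} for $\alb\dim(X)=1$ together with the already-known Severi type inequalities \eqref{severi} for $\alb\dim(X)=2,3$ from \cite{Barja_Severi} and \cite{Zhang}. One small imprecision: after Stein factorization the curve $B$ has genus $g(B) \ge h^1(X,\CO_X)$, not necessarily equality, but this plays no role since you only need $g(B)\ge 1$ to invoke Theorem \ref{main5}.
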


    We would like to mention that the full Severi type inequality \eqref{severi} can be deduced from \cite[Example 2.4]{Barja_Severi} by Barja if $h^0_{\alb_X}(X, K_X) \ge \chi(X, \omega_{X})$ (see the notation loc. cit.). However, this assumption is not always satisfied when $\alb\dim(X) = 1$. See Remark \ref{continuous rank} for example.
    
    It is also worth mentioning that the above full Severi type inequality holds for irregular surfaces of general type in arbitrary characteristic. See \cite{Lopes_Pardini} for characteristic zero and \cite{Yuan_Zhang} for positive characteristics. Thus it might be natural to wonder whether Corollary \ref{full_severi} holds also in positive characteristics.

	\subsection{Proof of sharp bounds: a characteristic $p > 0$ method}
	The rough idea of proofs of all above results can be summarized simply as follows: by reduction mod $p$, it suffices to prove the corresponding slope inequality for $f: X \to B$ over a field of characteristic $p > 0$. Then we study the $e^{\mathrm{th}}$ Frobenius base change $f_e: X_e \to B$ of $f$ and prove a \emph{slope-like inequality} for $f_e$ but with error terms. Finally, the error term is removed by letting $e \to \infty$. Nevertheless, to realize this very rough idea, many other ideas and techniques are needed.
	
	\subsubsection{Proof of Theorem \ref{main2}} To illustrate our method in a concrete way, we start from a sketch of the proof of Theorem \ref{main2}.
	
	In fact, the main difficulty lies in the case when $K_F^2 = 1$. Thus $1 \le p_g(F) \le 2$. The more difficult case is when $F$ is a $(1, 2)$-surface. To prove the sharp bound in \eqref{slope1}, our method in this paper employs a characteristic $p>0$ argument. More precisely, let $k$ be a field of characteristic $p>0$, and let $f: X \to B$ be a $3$-fold fibration defined over $k$ with general fiber $F$ a $(1, 2)$-surface such that the fibration is ``specialized from characteristic zero". We prove that
	\begin{equation} \label{idea}
	(K_{X/B} + f^*A)^3 \ge \frac{4}{3} \deg f_* \omega_{X/B}
	\end{equation}
	holds for an ample $\QQ$-divisor $A$ on $B$ with $\deg A$ arbitrarily small. This just implies \eqref{slope1} by letting $\deg A \to 0$.
	
	To prove \eqref{idea}, we use in an extensive way the ``distinguished" section $\Gamma_0$ of $f$ which comes from the base point of $|K_F|$. By carefully tracing the behavior of $\Gamma_0$ under Frobenius base changes and checking the difference of canonical divisors under normalizations and resolutions of singularities, we establish a \emph{slope-like comparison} between $(K_{X_e/B} + f_e^*A)^3$ and $h^0(X_e, \rounddown{K_{X_e/B}})$ similar to \eqref{idea}, and \eqref{idea} is a limit version of this comparison after taking $e \to \infty$. 
	
	To deal with the $(1, 1)$-surface fiber case, we still apply the above characteristic $p > 0$ argument but to a different setting. Via a relation between $\deg f_* \omega_{X/B}$ and $\deg f_* \omega_{X/B}^{[2]}$, we discover that to prove \eqref{slope2} for $(1, 1)$-surface fibrations, it suffices to prove that
	$$
	K_{X/B}^3 \ge \frac{1}{2} \deg f_* \omega_{X/B}^{[2]},
	$$
	which follows from the inequality
	\begin{equation} \label{idea2}
	(K_{X/B} + f^*A)^3 \ge \frac{1}{2} \deg f_* \omega_{X/B}^{[2]}
	\end{equation}
	over a field $k$ of characteristic $p > 0$. This is the key observation in this case. With this observation, the \emph{slope-like inequality} corresponding to \eqref{idea2} can be obtained by studying the map induced by $|\rounddown{2K_{X_e/B} + 2f_e^*A}|$. Notice that some delicate geometry of $(1, 1)$-surfaces specialized from characteristic zero plays an essential role here.
	
	\subsubsection{Proof of Theorem \ref{main3} and \ref{main4}} Now we move to the proof of Theorem \ref{main3}. Here we also need to consider the case when $p_g(F) = 3$ and $K_F^2=2$, or $3$. However, similarly to the $(1, 1)$-surface fiber case, we study the map induced by $|\rounddown{K_{X_e/B}+f^*A}|$, and this directly yields
	a \emph{slope-like} inequality which helps to complete the proof of Theorem \ref{main3}. Moreover, the proof of Theorem \ref{main4} is roughly the same.
	
	\subsubsection{Comparison with Xiao's method}
	In fact, it would be very interesting, at least to us, to find characteristic zero proofs of Theorem \ref{main2}, \ref{main3} and \ref{main4}. 
	
	There are some known candidates which have been used throughout the past decades to study the slope problem: one is the GIT method \cite{Cornalba_Harris} or the Chow stability method by Bost \cite{Bost}\footnote{Bost's method works in any characteristic.}, which works when the canonical map of the general fiber is finite onto its image and semi-stable (the semi-stability assumption cannot be dropped); one is the method by Moriwaki \cite{Moriwaki} using the Bogomolov instability theorem over surfaces, which seems very difficult to be generalized to higher dimensions (see \cite[\S 3.4]{Barja_Stoppino_1} for a detailed discussion); the other is Xiao's method \cite{Xiao}, which has been widely applied further to various slope problems as well as other problems. See \cite{Ohno,Konno,Barja,Barja_Stoppino_1} for example.
	
	From \textsc{Table} \ref{table}, we have seen that for $3$-fold fibrations (with no extra assumptions on fibers), our characteristic $p$ method yields better bounds than those obtained via Xiao's method. This is somewhat surprising, because for surface fibrations, it is Xiao's method that leads to the sharp lower bound. It looks to us that starting from $n=3$, results via the two methods bifurcate from each other. One possible reason is when the fiber is no longer a curve, its geometry becomes much more complicated so that the estimate via the \emph{original} Xiao's method is not that accurate as for curve fibers.

	\subsection*{Notation and conventions} In this paper, if not specially mentioned, we always work over an algebraically closed field of characteristic zero. The characteristic $p > 0$ arguments appear only in Section \ref{12proof}--\ref{remainingproof}. All varieties are assumed to be projective. We use $\sim$ to denote the linear equivalence and use $\equiv$ to denote the numerical equivalence.
	
	\subsubsection*{Fibration} A fibration always means a surjective morphism with connected fibers. Let $f: X \to B$ be a fibration from $X$ to a curve $B$. We say that $f$ is relatively minimal, if $X$ is projective and normal, with at worst terminal singularities, and the divisor $K_X$ is $f$-nef. Notice that Ohno \cite[Theorem 1.4]{Ohno} has proved that this implies that $K_{X/B}$ is nef in characteristic zero.
	
	\subsubsection*{Abbreviation} In this paper, we always use the notion of \emph{$(a, b)$-surfaces}. By an  $(a, b)$-surface, we mean a minimal surface of general type with $K^2=a$ and $p_g=b$.
	
	\subsection*{Acknowledgement} Y.H. would like to thank Professors JongHae Keum and Jun-Muk Hwang for their generous support during  his stay at KIAS. He also would like to thank Professor Jungkai Chen for his interest in this paper. T.Z. would like to thank Professor Meng Chen for communications since 2013 about the full Severi type inequality in dimension three. Both of the authors would like to thank Professors Miguel \'Angle Barja and Lidia Stoppino for their interest in this paper, as well as Professor Kang Zuo for his very enlightening comments and suggestions regarding this paper. Last but not least, they would like to thank sincerely the anonymous referee for his(her) valuable comments and insights which help to improve the presentation of the paper dramatically.
	
	The work of Y.H. is supported by National Researcher Program of National Research Foundation of Korea (Grant
	No.~2010-0020413). The work of T.Z. is supported by a Science and Technology Commission of Shanghai Municipality (STCSM) Grant No. 18dz2271000. T.Z. was also funded by a Leverhulme Trust Research Project Grant ECF-2016-269 when he started working on this problem at Durham University.
	
	\section{Proof of Theorem \ref{main}} \label{proof}
	
	In this section, we will prove Theorem \ref{main} by a very explicit construction.
	
	\subsection{The construction} Let $g \ge 0$ and $n > 2$ be the two given integers. We start from the following basic data:
	\begin{itemize}
		\item [(i)] Let $Z = A \times B$ be the product of $A$ and $B$, where $B$ is a smooth curve of genus $g$ and $A$ is an abelian variety of dimension $n-2$.
		
		\item [(ii)] Let $p_A$ and $p_B$ be the two natural projections of $Z$. We choose an ample divisor $D_A$ on $A$ and an ample divisor $D_B$ on $B$. Then the divisor
		$$
		D := p_A^*D_A + p_B^*D_B
		$$
		is ample on $Z$. We may further assume that the linear system $|2D|$ is base point free.
		
		\item [(iii)] Let
		$$
		p: Y=\PP(\CO_Z \oplus \CO_Z(-2D)) \to Z
		$$
		be the $\PP^1$-bundle over $Z$. Let $H$ be the section associated to $\CO_Y(1)$ which corresponds to the natural projection $\CO_Z \oplus \CO_Z(-2D) \to \CO_Z(-2D)$.
	\end{itemize}

	Here the first observation is that the linear system $|H + 2p^*D|$ is base point free, and $\CO_H(H + 2p^*D) = \CO_H$. Therefore, we can find a smooth divisor $H' \in |5H + 10p^*D|$ such that $H'$ and $H$ do not intersect with each other. Notice that $H+H'$ is linear equivalent to an even divisor $6H + 10p^*D$. This induces a double cover $\pi: X \to Y$ branched along $H$ and $H'$. In particular, $X$ is smooth. To sum up, we have the following commutative diagram:
	$$
	\xymatrix{
		X \ar[r]^{\pi} \ar[dd]_{f} & Y \ar[d]^p \ar[ldd]_{\sigma} & \\
		& Z \ar[ld]^{p_B} \ar[rd]_{p_A} & \\
		B & & A
	}
	$$
	In the above diagram, we denote by $f: X \to B$ (resp. $\sigma: Y \to B$) the induced fibration from $X$ (resp. $Y$) to $B$.
	
	\subsection{Calculations} \label{calculation} Denote by $F$ a general fiber of $f$ and by $\Sigma$ a general fiber of $\sigma$. Then we have the following proposition.
	
	\begin{prop} \label{fiberdata}
		The variety $F$ is minimal of general type with $K_F^{n-1} = (3^{n-1}-1)D_A^{n-2}$ and $p_g(F) = \frac{ (3^{n-2} + 1)}{(n-2)!} D_A^{n-2}$.
	\end{prop}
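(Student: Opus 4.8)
The plan is to reduce the computation of $K_F^{n-1}$ and $p_g(F)$ to intersection theory and cohomology on the ruled variety carrying $F$, exploiting that $F$ is the double cover cut out by the construction. First I would identify the general fiber $\Sigma$ of $\sigma$: since $D = p_A^*D_A + p_B^*D_B$ and the summand $p_B^*D_B$ restricts trivially to a general fiber $A\times\{b\}$, one gets $\Sigma \cong \PP(\CO_A\oplus\CO_A(-2D_A))$ with projection $\rho:\Sigma\to A$. Write $\zeta$ for the tautological class $\xi|_\Sigma$ and $\eta = \rho^*D_A$. From $c_1 = -2D_A$, $c_2 = 0$ of the rank-two bundle, the Grothendieck relation gives $\zeta(\zeta+2\eta)=0$, and since $\dim A = n-2$ one has $\eta^{\,n-1}=0$, $\eta^{\,n-2}=D_A^{n-2}\,[\text{fiber}]$, hence $\zeta\cdot\eta^{\,n-2}=D_A^{n-2}$. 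The identity $\CO_H(H+2p^*D)=\CO_H$ forces $[H]=\xi$, so the branch divisor $H+H'$ restricts to $\Sigma$ with class $6\zeta+10\eta$, half of which is $L := 3\zeta+5\eta$; thus $F\to\Sigma$ is the double cover with this half-branch class $L$.

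Next I would compute the canonical class. Because $K_A=0$ one has $K_\Sigma = \omega_{\Sigma/A} = -2\zeta-2\eta$, so for the double cover $\pi_F:F\to\Sigma$ the standard formula yields
$$
K_F = \pi_F^*\!\left(K_\Sigma + L\right) = \pi_F^*\!\left(\zeta+3\eta\right).
$$
Setting $T := \zeta+3\eta$, the degree follows from $K_F^{n-1} = 2\,T^{\,n-1}$. Expanding $T^{\,n-1} = \sum_{k=0}^{n-1}\binom{n-1}{k}3^{\,n-1-k}\zeta^k\eta^{\,n-1-k}$, the $k=0$ term drops out, while the relation $\zeta(\zeta+2\eta)=0$ gives $\zeta^k = (-2)^{k-1}\zeta\eta^{k-1}$ for $k\ge 1$, so each surviving term equals $(-2)^{k-1}D_A^{n-2}$. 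The remaining binomial sum collapses via $(3-2)^{n-1}=1$ to $\tfrac{1}{2}(3^{n-1}-1)D_A^{n-2}$, whence $K_F^{n-1} = (3^{n-1}-1)D_A^{n-2}$.

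For $p_g$ I would use $\pi_{F*}\omega_F = \omega_\Sigma \oplus (\omega_\Sigma\otimes\CO_\Sigma(L))$, so $p_g(F) = h^0(\omega_\Sigma) + h^0(\omega_\Sigma\otimes\CO_\Sigma(L))$. The first summand vanishes since $\Sigma$ is ruled, and $\omega_\Sigma\otimes\CO_\Sigma(L)=\CO_\Sigma(T)$. Pushing forward along $\rho$ and using $\rho_*\CO_\Sigma(\zeta)=\CO_A\oplus\CO_A(-2D_A)$ together with $R^1\rho_*\CO_\Sigma(\zeta)=0$, the projection formula gives $\rho_*\CO_\Sigma(T) = \CO_A(3D_A)\oplus\CO_A(D_A)$, so $p_g(F) = h^0(A,\CO_A(3D_A)) + h^0(A,\CO_A(D_A))$. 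Finally, on the abelian variety $A$ the higher cohomology of an ample line bundle vanishes and Riemann–Roch reads $\chi(\CO_A(mD_A)) = m^{\,n-2}D_A^{n-2}/(n-2)!$, yielding $p_g(F) = \tfrac{3^{n-2}+1}{(n-2)!}D_A^{n-2}$.

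It remains to see that $F$ is minimal of general type. Since $X$ is smooth and we are in characteristic zero, $F$ is smooth by generic smoothness, so it suffices to show $T=\zeta+3\eta$ is nef and big. For bigness, $T^{\,n-1}>0$ by the computation above. For nefness, note $\zeta+2\eta$ is the tautological class of $\PP(\CO_A(2D_A)\oplus\CO_A)$, whose defining bundle is nef, so $\zeta+2\eta$ is nef; then $T=(\zeta+2\eta)+\eta$ is a sum of nef classes and hence nef. Thus $K_F=\pi_F^*T$ is nef and big and $F$ is minimal of general type. I expect the only genuinely delicate point to be the bookkeeping of the $\PP$-bundle conventions — in particular fixing the sign of $\omega_{\Sigma/A}$, the identification $[H]=\zeta$ on the fiber, and the nefness of $\zeta+2\eta$; once these and the base-change identity $D|_{A\times\{b\}}=D_A$ are pinned down, the numerics are routine.
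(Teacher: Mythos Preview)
Your proof is correct and follows essentially the same approach as the paper: identify $\Sigma$ as $\PP(\CO_A\oplus\CO_A(-2D_A))$, apply the double cover formula to get $K_F=\pi_F^*(\zeta+3\eta)$, compute $K_F^{n-1}$ by intersection theory on $\Sigma$, and compute $p_g(F)$ by pushing $\CO_\Sigma(\zeta+3\eta)$ down to $A$ and invoking Riemann--Roch plus vanishing on the abelian variety. The only minor difference is that the paper asserts $\zeta+3\eta$ is \emph{ample} on $\Sigma$ (which follows since $\CO_A(3D_A)\oplus\CO_A(D_A)$ is an ample bundle), whereas you establish only nef and big; both suffice for the stated conclusion that $F$ is minimal of general type.
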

	
	\begin{proof}
		It follows from the previous construction that
		$$
		\Sigma \cong \PP(\CO_A \oplus \CO_A(-2D_A)).
		$$
		By abuse of notation, we still denote by $p: \Sigma \to A$ the $\PP^1$-bundle structure on $\Sigma$ and denote by $H_A$ the only effective divisor associated to $\CO_{\Sigma}(1)$ which corresponds to the natural projection $\CO_\Sigma \oplus \CO_\Sigma(-2D_A) \to \CO_\Sigma(-2D_A)$. Then the restriction of $\pi$ on $F$ is just a double cover over $\Sigma$ branched along $(H+H')|_{\Sigma} \sim 6H_A + 10p^*D_A$.
		
		By the double cover formula and the fact that $K_A \sim 0$, we have
		$$
		K_F \sim \pi^*(K_\Sigma + 3H_A + 5p^*D_A) = \pi^*(H_A + 3p^*D_A).
		$$
		Notice that $H_A + 3p^*D_A$ is ample on $\Sigma$. We conclude that $K_F$ is ample. Thus $F$ is minimal of general type. Also from the above formula, we have
		$$
		p_g(F) = h^0(F, K_F) = h^0(\Sigma, H_A + 3p^*D_A).
		$$
		Notice that $p_*\CO_{\Sigma}(H_A + 3p^*D_A) = \CO_A(D_A) \oplus \CO_A(3D_A)$. Therefore, from the Riemann-Roch formula on abelian varieties as well as the Kodaira vanishing theorem, we obtain that
		$$
		p_g(F) = h^0(A, D_A) + h^0(A, 3D_A) = \frac{ (3^{n-2} + 1)}{(n-2)!} D_A^{n-2}.
		$$
		In the meantime, we can calculate that
		$$
		K_F^{n-1} = 2(H_A + 3p^*D_A)^{n-1} = (3^{n-1}-1)D_A^{n-2}.
		$$
		Thus the proof is completed.
	\end{proof}
	
	\begin{prop} \label{Kn}
		We have $K_{X/B}^n = (3^{n}-1)(n-1)(\deg D_B) D_A^{n-2}$.
	\end{prop}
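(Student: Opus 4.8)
The plan is to express $K_{X/B}$ as a pullback under $\pi$, reduce the $n$-fold self-intersection to an intersection computation on the $\PP^1$-bundle $Y$, and finally to a single top self-intersection number on $Z=A\times B$. First I would determine the class of $K_{X/B}$, repeating globally over $B$ the fiberwise computation of Proposition \ref{fiberdata}. The double cover formula applied to the branch divisor $H+H'\sim 6H+10p^*D$ gives $K_X=\pi^*(K_Y+3H+5p^*D)$. For the $\PP^1$-bundle $p\colon Y=\PP(\CO_Z\oplus\CO_Z(-2D))\to Z$ the relative canonical formula reads $K_{Y/Z}=-2H+p^*\det(\CO_Z\oplus\CO_Z(-2D))=-2H-2p^*D$, and since $A$ is abelian we have $K_Z=p_B^*K_B$, whence $p^*K_Z=\sigma^*K_B$. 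Combining these yields $K_Y=-2H-2p^*D+\sigma^*K_B$, so $K_X=\pi^*(H+3p^*D+\sigma^*K_B)$. Because $f=\sigma\circ\pi$, subtracting $f^*K_B=\pi^*\sigma^*K_B$ produces the clean expression $K_{X/B}=\pi^*(H+3p^*D)$.

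Next, since $\pi$ is finite of degree $2$, I would use $K_{X/B}^n=2\,(H+3p^*D)^n$ and compute the right-hand side on $Y$. The essential input is the self-intersection relation coming from the stated identity $\CO_H(H+2p^*D)=\CO_H$, namely $H\cdot(H+2p^*D)=0$, that is $H^2=-2(p^*D)\cdot H$; iterating gives $H^k=(-2)^{k-1}(p^*D)^{k-1}H$ for $k\ge 1$. Writing $\delta=p^*D$ and expanding the binomial, every term of $(H+3\delta)^n$ with $k\ge 1$ reduces to $\binom{n}{k}(-2)^{k-1}3^{n-k}\,\delta^{n-1}H$, while the $k=0$ term is $3^n\delta^n=0$ since $\dim Z=n-1$ forces $D^n=0$. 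Using $p_*H=[Z]$ together with the projection formula, $\delta^{n-1}H$ pushes down to $D^{n-1}$, so that
$$
(H+3p^*D)^n=\left(\sum_{k=1}^{n}\binom{n}{k}(-2)^{k-1}3^{n-k}\right)D^{n-1}=\frac{3^n-1}{2}\,D^{n-1},
$$
where the collapse of the sum is the binomial theorem applied to $(3+(-2))^n-3^n=1-3^n$. Hence $K_{X/B}^n=(3^n-1)\,D^{n-1}$.

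Finally I would evaluate $D^{n-1}$ on $Z=A\times B$. Expanding $D=p_A^*D_A+p_B^*D_B$ and using that $B$ is a curve, so $(p_B^*D_B)^2=0$, while $\dim A=n-2$, so $(p_A^*D_A)^{n-1}=0$, the only surviving summand is the cross term with $j=n-2$, giving $D^{n-1}=(n-1)(\deg D_B)\,D_A^{n-2}$. Substituting yields $K_{X/B}^n=(3^n-1)(n-1)(\deg D_B)D_A^{n-2}$, as claimed. As a consistency check, restricting $K_{X/B}=\pi^*(H+3p^*D)$ to a general fiber recovers $K_F=\pi^*(H_A+3p^*D_A)$ and the same binomial identity with $n$ replaced by $n-1$ reproduces the value $K_F^{n-1}=(3^{n-1}-1)D_A^{n-2}$ of Proposition \ref{fiberdata}.

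I expect no single deep obstacle: the statement is a self-intersection computation, and the real work is careful bookkeeping. The one place where sign and convention errors can creep in is the first step, where one must fix the Grothendieck convention for $\CO_Y(1)$ consistently with the paper's description of $H$ as the section attached to the quotient $\CO_Z\oplus\CO_Z(-2D)\to\CO_Z(-2D)$, so that both the relative canonical formula and the relation $H^2=-2(p^*D)H$ carry the correct signs. The stated normalization $\CO_H(H+2p^*D)=\CO_H$ is precisely the consistency check that pins this down (it is the self-intersection of the section whose kernel is $\CO_Z$). After that, the only mild subtlety is tracking which self-intersections vanish for dimension reasons on $Z$, $A$, and $B$.
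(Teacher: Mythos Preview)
Your proposal is correct and follows essentially the same route as the paper: compute $K_{X/B}=\pi^*(H+3p^*D)$ via the double cover and relative canonical formulas, reduce to $(3^n-1)D^{n-1}$ on $Y$, and then evaluate $D^{n-1}$ on $Z$. The paper's proof is terser (it simply says ``similarly to Proposition~\ref{fiberdata}'' and writes down the two endpoints $K_{X/B}^n=(3^n-1)D^{n-1}$ and $D^{n-1}=(n-1)(\deg D_B)D_A^{n-2}$), but your explicit binomial manipulation using $H^2=-2(p^*D)H$ is exactly the computation it is suppressing.
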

	
	\begin{proof}
		This is similar to the proof of Proposition \ref{fiberdata}. Notice that we have
		$$
		K_{X/B} \sim \pi^*(K_{Y/B} + 3H + 5p^*D) = \pi^*(H+3p^*D).
		$$
		We can similarly deduce that
		$$
		K_{X/B}^n = (3^n-1)D^{n-1}.
		$$
		Thus the result follows from the fact that $D^{n-1}=(n-1)(\deg D_B) D_A^{n-2}$. Therefore, the proof is completed.
	\end{proof}
	
	\begin{prop} \label{degree}
		We have
		$$
		f_* \omega_{X/B} = \CO_B(D_B)^{\oplus h^0(A, D_A)} \oplus \CO_B(3D_B)^{\oplus h^0(A, 3D_A)}.
		$$
		In particular, $\deg f_*\omega_{X/B} = \frac{(3^{n-1} + 1)}{(n-2)!}(\deg D_B)D_A^{n-2}$.
	\end{prop}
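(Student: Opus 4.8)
The plan is to exploit the three structural maps in sequence---the double cover $\pi\colon X\to Y$, the $\PP^1$-bundle $p\colon Y\to Z$, and the product projection on $Z=A\times B$---computing the pushforward in stages. First I would record the double cover formula: since the branch divisor $H+H'\sim 6H+10p^*D=2(3H+5p^*D)$, the cover is determined by the line bundle $L=\CO_Y(3H+5p^*D)$, whence $\pi_*\CO_X=\CO_Y\oplus L^{-1}$ and, by relative duality for the finite flat map $\pi$,
\[
\pi_*\omega_X=\omega_Y\oplus(\omega_Y\otimes L).
\]
Writing $\omega_{X/B}=\omega_X\otimes f^*\omega_B^{-1}$ and using $f=\sigma\circ\pi$ with $\pi$ finite, the projection formula gives $f_*\omega_{X/B}=(\sigma_*\pi_*\omega_X)\otimes\omega_B^{-1}$, so it suffices to compute $\sigma_*\omega_Y$ and $\sigma_*(\omega_Y\otimes L)$.

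Next I would make $\omega_Y$ explicit. From the relative canonical bundle formula for $Y=\PP(\CO_Z\oplus\CO_Z(-2D))$ one has $\omega_{Y/Z}=\CO_Y(-2H-2p^*D)$, and since $A$ is abelian we have $\omega_Z=p_B^*\omega_B$, hence $\omega_Y=\CO_Y(-2H-2p^*D)\otimes\sigma^*\omega_B$. The first summand $\omega_Y$ restricts to $\omega_{\PP^1}$ of degree $-2$ on every fiber of $p$, so $p_*\omega_Y=0$ and therefore $\sigma_*\omega_Y=0$. For the second summand,
\[
\omega_Y\otimes L=\CO_Y(H+3p^*D)\otimes\sigma^*\omega_B,
\]
and pushing forward along $p$ via the projection formula together with $p_*\CO_Y(1)=\CO_Z\oplus\CO_Z(-2D)$ yields
\[
p_*(\omega_Y\otimes L)=\bigl(\CO_Z(3D)\oplus\CO_Z(D)\bigr)\otimes p_B^*\omega_B.
\]

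Finally I would push forward along $p_B\colon Z=A\times B\to B$. Writing $\CO_Z(mD)=p_A^*\CO_A(mD_A)\otimes p_B^*\CO_B(mD_B)$ and applying flat base change (Künneth) for the product, $(p_B)_*\bigl(p_A^*\CO_A(mD_A)\otimes p_B^*\mathcal{G}\bigr)=\mathcal{G}^{\oplus h^0(A,mD_A)}$. Taking $\mathcal{G}=\CO_B(mD_B)\otimes\omega_B$ for $m=1,3$ and tensoring by $\omega_B^{-1}$ gives exactly $f_*\omega_{X/B}=\CO_B(3D_B)^{\oplus h^0(A,3D_A)}\oplus\CO_B(D_B)^{\oplus h^0(A,D_A)}$. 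The degree then follows from $h^0(A,mD_A)=\chi(A,mD_A)=\tfrac{m^{n-2}}{(n-2)!}D_A^{n-2}$ (Riemann--Roch on the abelian variety, higher cohomology killed by Kodaira vanishing since $\omega_A$ is trivial), giving $\deg f_*\omega_{X/B}=\tfrac{1}{(n-2)!}(3^{n-1}+1)(\deg D_B)D_A^{n-2}$.

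The argument is essentially bookkeeping, and the only points requiring care---which I regard as the main obstacles---are fixing the $\PP$-bundle convention so that $p_*\CO_Y(1)=\CO_Z\oplus\CO_Z(-2D)$, consistently with the normalization $\CO_H(H+2p^*D)=\CO_H$ used in the construction, pinning down the sign in $\omega_{Y/Z}$, and verifying that only the $R^0$ term contributes to $\sigma_*=(p_B)_*p_*$, so that the vanishing of the $\omega_Y$-summand and the absence of spurious higher direct images are genuine.
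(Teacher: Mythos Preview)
Your proposal is correct and follows essentially the same route as the paper: both factor $f=p_B\circ p\circ\pi$ and compute the pushforward in three stages, using the double cover formula, the identity $p_*\CO_Y(1)=\CO_Z\oplus\CO_Z(-2D)$, and K\"unneth/Riemann--Roch on the abelian factor. The only cosmetic difference is that the paper works with relative dualizing sheaves throughout (writing $\pi_*\omega_{X/B}=\omega_{Y/B}\oplus\CO_Y(H+3p^*D)$ directly), whereas you compute $\pi_*\omega_X$ first and twist by $\omega_B^{-1}$ at the end; the two are equivalent since $\omega_{Z/B}=\CO_Z$ on the abelian-times-curve base.
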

	
	\begin{proof}
		To calculate $f_* \omega_{X/B}$, we just use the fact that $f = p_B \circ p \circ \pi$. First, from the double cover formula above, we know that
		$$
		\pi_* \omega_{X/B} = \omega_{Y/B} \oplus  \CO_Y(H + 3p^*D).
		$$
		Thus by the projection formula and by the fact that $Y$ is a $\PP^1$-bundle, we obtain
		$$
		p_* (\pi_* \omega_{X/B} ) = \CO_Z(D) \oplus \CO_Z(3D).
		$$
		Using the construction of $D$, we deduce that
		\begin{eqnarray*}
			f_*\omega_{X/B} & = & {p_B}_*\CO_Z(D) \oplus {p_B}_* \CO_Z(3D) \\
			& = & \CO_B(D_B)^{\oplus h^0(A, D_A)} \oplus \CO_B(3D_B)^{\oplus h^0(A, 3D_A)}.
		\end{eqnarray*}
		The degree of $f_*\omega_{X/B}$ is just a direct calculation from the Riemann-Roch formula again. Thus the proof is completed.
	\end{proof}
	
	\subsection{Conclusion} From the propositions in the previous subsection, we simply have
	$$
	\frac{K_F^{n-1}}{p_g(F)} = \frac{(3^{n-1}-1)(n-2)!}{3^{n-2} + 1}
	$$
	and
	$$
	\frac{K_{X/B}^n}{\deg f_* \omega_{X/B}} = \frac{(3^n-1)(n-1)!}{3^{n-1} + 1}.
	$$
	It is straightforward to check that
	$$
	n \frac{K_F^{n-1}}{p_g(F)} > \frac{K_{X/B}^n}{\deg f_* \omega_{X/B}} \Longleftrightarrow (n-1)(3^n-3^{n-2}) - 3^{2n-2} + 1 < 0,
	$$
	and the inequality on the right hand side holds for any $n > 2$. Thus the proof of Theorem \ref{main} is completed.
	
	\subsection{Further remark} \label{Further remark} In fact, in the previous construction, the variety $A$ does not have to be an abelian variety. The reason we choose an abelian variety is just to simplify the calculation a bit. Here we give another example by assuming that $A = \PP^1$. Thus $n=3$. Since the calculation here is very similar to the above, we just give a sketch.
	
	We apply the same notation as before, and further assume that $\deg D_A > 2$. Then
	$$
	K_F = \pi^*(K_\Sigma + 3H_A + 5p^*D_A) = \pi^*(H_A + 3p^*D_A + p^*K_A).
	$$
	Since $\deg D_A > 2$, the divisor $H_A + 3p^*D_A + p^*K_A$ is still ample. So is $K_F$. Moreover, we have
	$$
	K_F^2 = 2(H_A + 3p^*D_A + p^*K_A)^2 = 8 (\deg D_A - 1).
	$$
	We also deduce that
	$$
	p_g(F) = h^0(A, D_A + K_A) + h^0(A, 3D_A + K_A) = 4 \deg D_A - 2.
	$$
	Notice that $F$ is a Horikawa surface. Thus its canonical map is finite onto its image of degree two. 
	
	Now we turn to the invariant of $K_{X/B}$. We can similarly deduce that
	$$
	K_{X/B} = \pi^*(H + 3p^*D + p^*K_{Z/B}).
	$$
	Thus we can calculate that
	$$
	K_{X/B}^3 = 2 (H+3p^*D + p^*K_{Z/B})^3 = 4(\deg D_B) (13 \deg D_A - 12).
	$$
	Furthermore, we know that
	\begin{eqnarray*}
		f_* \omega_{X/B} & = & {p_B}_*\CO_Z(D + K_{Z/B}) \oplus {p_B}_*\CO_Z(3D + K_{Z/B}) \\
		& = & \CO_B(D_B)^{\oplus h^0(A, D_A+K_A)} \oplus \CO_B(3D_B)^{\oplus h^0(A, 3D_A+K_A)}.
	\end{eqnarray*}
	Therefore, we deduce that
	$$
	\deg f_* \omega_{X/B} = (\deg D_B)(10 \deg D_A - 4).
	$$
	As a result, we can easily verify that in this case, the inequality \eqref{slopeinequality} fails by checking that
	$$
	\frac{K_{X/B}^3}{\deg f_* \omega_{X/B}} = \frac{4(13\deg D_A - 12)}{10\deg D_A - 4} < \frac{24(\deg D_A - 1)}{4\deg D_A-2} = \frac{3K_F^2}{p_g(F)}.
	$$
	In fact, \eqref{slopeinequality} also fails when $\deg D_A = 2$. However, in this case, $K_F$ is nef and big, but not ample.
	
	For general $n > 3$, if we take $A = \PP^{n-2}$, similar counterexamples to \eqref{slopeinequality} can also be produced. We leave the calculation to the interested reader.
	
	\begin{remark} \label{continuous rank}
		Go back to the above example when $n=3$. Notice that $h^1(F, \CO_F) = 0$. Since $R^1 f_*\omega_X$ is torsion free, we deduce that $R^1 f_*\omega_X = 0$. From the Leray spectral sequence, we deduce that
		\begin{eqnarray*}
			h^1(X, \omega_X) & = & h^1(B, f_* \omega_X) = h^0(B, (f_* \omega_{X/B})^\vee) = 0; \\
			h^2(X, \omega_X) & = & h^0(B, R^2 f_* \omega_X) = h^0(B, \omega_B).
		\end{eqnarray*}
		Thus it follows that
		$$
		h^0(X, \omega_X) = \chi(X, \omega_X) - \chi(B, \omega_B).
		$$
		In particular, if $g(B) \ge 2$, for general $\alpha \in \Pic^0(X)$, we have 
		$$
		h^0(X, \omega_X \otimes \alpha) < \chi(X, \omega_X).
		$$
	\end{remark}
	
	\section{A $3$-fold example with the sharp slope bound} \label{(1,2) example}
	
	In this section, we mainly focus on $3$-fold fibrations over curves.
	
	Recall that the Noether inequality asserts that $K_F^2 \ge 2p_g(F) - 4$ if $F$ is a minimal surface of general type. In particular, one can easily check that \eqref{slopeinequality} plus the above Noether inequality would imply that
	$$
	K_{X/B}^3 \ge \frac{3}{2} \deg f_* \omega_{X/B}
	$$
	when the general fiber $F$ of $f$ is smooth and $p_g(F)>0$. In particular, the smallest slope should be achieved when $K^2_F=1$ and $p_g(F)=2$. However, in this section we prove that
	\begin{prop} \label{example}
		For any $g \ge 0$, there is a $3$-fold fibration $f: X \to B$ to a curve $B$ of genus $g$ which satisfies the assumption in Conjecture \ref{conjecture} but
		$$
		K_{X/B}^3 = \frac{4}{3} \deg f_* \omega_{X/B}.
		$$
	\end{prop}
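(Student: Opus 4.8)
My plan is to realize the family as a relative double cover modelled on the classical description of a single $(1,2)$-surface. Recall that a $(1,2)$-surface $S$ can be presented as the hypersurface $\{z^2=f_{10}(x_0,x_1,y)\}$ in the weighted projective space $\PP(1,1,2,5)$, equivalently as the double cover $\pi\colon S\to W$ of the cone $W=\PP(1,1,2)$ branched along the degree-$10$ curve $\{f_{10}=0\}$, where the two degree-one coordinates $x_0,x_1$ are precisely the canonical sections. For this model one has $K_S=\pi^{*}\CO_{W}(1)$, hence $p_g(S)=h^0(\CO_W(1))=2$ and $K_S^2=2\,\CO_W(1)^2=1$, while the single base point of $|K_S|$ is the point of $S$ lying over the vertex of $W$. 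The first step is to spread this out over $B$: I would choose line bundles on $B$ to twist the homogeneous coordinates --- a rank-two bundle $\mathcal{E}$ carrying $(x_0,x_1)$, which is destined to be $f_*\omega_{X/B}$, together with line bundles carrying $y$ and $z$ --- and form the associated relative weighted projective bundle $\mathcal{W}\to B$ with tautological class $\xi=\CO_{\mathcal{W}}(1)$. A general relative equation $f_{10}\in H^0(\mathcal{W},\CO(10)\otimes f^{*}\mathcal{T})$ restricting to the standard branch curve on each fibre then defines the double cover $\pi\colon X\to\mathcal{W}$, and $f\colon X\to B$ is the composite with $\mathcal{W}\to B$. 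This is exactly Kobayashi's single-surface model carried over a base of arbitrary genus, and I would keep the twisting degrees free and pin them down at the end.

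With the construction in place I would verify the fibrewise and global geometry. A general fibre $F$ is the standard $(1,2)$-surface, so $K_F=\pi^{*}\CO_{W}(1)$ is ample, $K_F^2=1$, and $p_g(F)=2$. The key local point is that the branch curve misses the vertex of $W$ while $\CO_W(5)$ represents the nontrivial class of the local $A_1$-singularity; consequently the double cover is connected over the vertex and automatically resolves it, so $F$ --- and, globally, $X$ --- is smooth there. As the base point of $|K_F|$ varies it sweeps out a distinguished section $\Gamma_0\cong B$ of $f$ over the vertex-section of $\mathcal{W}\to B$, exactly the section featuring in the introduction. Since $K_F$ is ample on every general fibre, $\omega_{X/B}$ is $f$-nef and $f$ is relatively minimal, so $f$ satisfies the hypotheses of Conjecture \ref{conjecture}.

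The two invariants are then computed as in Section \ref{proof}. From the double-cover decomposition $\pi_*\omega_{X/B}=\omega_{\mathcal{W}/B}\oplus\bigl(\omega_{\mathcal{W}/B}\otimes L\bigr)$, where $L=\CO(5)$ is the branch line bundle up to a twist from $B$, together with $\omega_{\mathcal{W}/B}\cong\CO(-4)$ up to a twist from $B$ (recall $K_W=\CO_W(-4)$), one finds $\pi_*\omega_{X/B}\cong\CO(-4)\oplus\CO(1)$ up to twists from $B$; pushing down to $B$ annihilates the first summand and recovers $f_*\omega_{X/B}=\mathcal{E}$, so $\deg f_*\omega_{X/B}=\deg\mathcal{E}$. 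For the top self-intersection I would use $K_{X/B}=\pi^{*}\xi$ and $\deg\pi=2$ to get $K_{X/B}^3=2\,\xi^3$, the right-hand side being a $\QQ$-valued intersection number on the weighted bundle $\mathcal{W}$ evaluated by the weighted Grothendieck relation. Carrying out these two degree computations and imposing the twists dictated by the construction should yield the equality $K_{X/B}^3=\tfrac43\deg f_*\omega_{X/B}$; note that this already lies strictly below the value $\tfrac32\deg f_*\omega_{X/B}$ predicted by the slope inequality together with Noether's inequality, so the example simultaneously refutes Conjecture \ref{conjecture} and realizes the sharp bound of Theorem \ref{main2}.

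The delicate point is the vertex-section $\Gamma_0$. Along it both $\mathcal{W}\to B$ and $\xi$ are singular, respectively non-Cartier, so $\xi^3$ is only a rational number and must be handled with care; it is precisely the fractional contribution concentrated along $\Gamma_0$ that pulls the naive value down to $\tfrac43$. Concretely I expect to resolve the transverse $A_1$-singularity of $\mathcal{W}$ by passing to the corresponding $\mathbb{F}_2$-bundle, on whose fibres $\CO_W(1)$ pulls back to the $\QQ$-divisor $\tfrac12 C_0+\Gamma$, and to propagate this correction through to $K_{X/B}^3$; equivalently one may work on the smooth total space $X$ directly and compute $K_{X/B}^3$ from the local geometry around $\Gamma_0$. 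Getting this local contribution numerically exact, and confirming that the constant is precisely $\tfrac43$ rather than merely an inequality, is the main obstacle --- everything else is the bookkeeping of double covers and projective bundles already illustrated in Section \ref{proof}.
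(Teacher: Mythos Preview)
Your plan is correct and builds the same threefold as the paper, approached from the opposite end of a birational contraction. The paper works on the resolved side: it forms an iterated $\PP^1$-bundle $Y\to S\to B$ whose fibres over $B$ are Hirzebruch surfaces $\mathbb{F}_2$, takes the smooth double cover $X'\to Y$ branched in $|6H+10p^*D|$, observes that $K_{X'}$ is not nef (each fibre $F'$ carries a $(-1)$-curve lying over the negative section of $\mathbb{F}_2$), and then contracts the ruled surface $\Sigma\cong S$ to a section $\Gamma_0\cong B$ to obtain the minimal model $X$; the numbers $K_{X/B}^3=4e$ and $\deg f_*\omega_{X/B}=3e$ are computed entirely on the smooth $X'$ via the identity $\phi^*K_X=K_{X'}-\Sigma$. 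You instead start from the contracted model $\mathcal{W}=\PP_B(1,1,2)$ and its double cover, which is $X$ directly, at the cost of a tautological class $\xi$ that is only $\QQ$-Cartier along the vertex-section; as you yourself note, the honest way to evaluate $\xi^3$ is to pass to the $\mathbb{F}_2$-bundle resolution, and at that moment you are doing exactly the paper's calculation. The two presentations therefore converge; the paper's has the practical advantage that every intersection is between Cartier divisors on a smooth variety, while yours makes the minimality of $X$ and the role of the base-point section $\Gamma_0$ transparent from the outset. One point to tighten: the sentence ``$K_F$ ample on every general fibre, hence $\omega_{X/B}$ is $f$-nef'' does not by itself establish the hypothesis of Conjecture~\ref{conjecture} --- the paper actually proves the stronger fact that $K_X$ is globally ample by analysing the nef and big divisor $M=K_{X'}-\Sigma$ and showing its only zero-curves are the rulings of $\Sigma$, and you would need the analogous argument on your side.
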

	As a result, this proposition disproves Conjecture \ref{conjecture} when $n=3$. The whole section is devoted to the proof of Proposition \ref{example}. The idea here comes from Kobayashi \cite{Kobayashi}.
	
	\subsection{Construction and calculation}
	Fix a curve $B$ of genus $g \ge 0$ and an effective divisor $D_B$ of degree $e \ge 3$ on $B$. Let 
	$$
	p_B: S = \PP(\CO_B \oplus \CO_B(-D_B)) \to B
	$$ 
	be the $\PP^1$-bundle over $B$, Let $B'$ be the negative section associated to $\CO_S(1)$ corresponding to the projection $\CO_B \oplus \CO_B(-D_B) \to \CO_B(-D_B)$, and let $C$ be a fiber of $p_B$. Denote $D = B' + eC$, and let 
	$$
	p: Y = \PP(\CO_S \oplus O_S(-2D)) \to S
	$$ 
	be the $\PP^1$-bundle over $S$. Let $H$ be the section associated to $\CO_Y(1)$ which corresponds to the projection $\CO_S \oplus \CO_S(-2D) \to \CO_S(-2D)$. Similarly to the previous section, we can find a smooth divisor $H' \in |5H + 10p^*D|$ such that $H$ and $H'$ are disjoint. Let 
	$$
	\pi: X' \to Y
	$$ 
	be the double cover branched along $H$ and $H'$. The same as in \S \ref{calculation}, we have
	\begin{eqnarray*}
		K_{X'} & = & \pi^*(K_Y + 3H + 5 p^*D) = \pi^*(p^*(K_S + 3D) + H) \\
		& = & \pi^*(p^*(B'+(2g-2 + 2e)C) + H).
	\end{eqnarray*}
	For simplicity, we denote $N = B'+(2g-2 + 2e)C$. Notice that $N$ is always ample for any $g$. We also write $\pi^*H = 2\Sigma$. However, the difference from the previous construction is that: $K_{X'}$ is not nef. In the following, the variety $X$ we will consider is the minimal model of $X'$. This is in fact the key point in \cite{Kobayashi}.
	
	Write $f': X' \to B$ and $p': X' \to S$ to be the induced fibration from $X'$ to $B$ and $S$, respectively. Let $M = K_{X'} - \Sigma = \Sigma + p'^*N$.
	\begin{lemma} \label{lemma1}
		The divisors $M$ and $M - f'^*K_B$ are both nef and big. Moreover, we have
		$$
		(M - f'^*K_B)^3 = 4e
		$$
	\end{lemma}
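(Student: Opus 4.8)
The plan is to push the whole computation onto the $\PP^1$-bundle $Y$, using that both $M$ and $M-f'^*K_B$ are pullbacks under the double cover $\pi$ of explicit $\QQ$-divisors on $Y$. Since $\pi^*H=2\Sigma$ and $p'=p\circ\pi$, the identity $M=\Sigma+p'^*N$ reads $M=\pi^*L$ with $L:=\tfrac12 H+p^*N$. Likewise $f'=p_B\circ p'$ gives $f'^*K_B=(2g-2)\,p'^*C$ (as $p_B^*K_B\equiv(2g-2)C$ on $S$), whence $M-f'^*K_B=\pi^*L'$ with $L':=\tfrac12 H+p^*N'$ and $N':=N-(2g-2)C=B'+2eC$. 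Because $\pi$ is finite and surjective, $M$ and $M-f'^*K_B$ are nef, resp. big, if and only if $L$ and $L'$ are; so it is enough to study $L$ and $L'$ on $Y$, and the degree-$2$ relation $(\pi^*\alpha)^3=2\,\alpha^3$ will give the final intersection number.

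First I would prove nefness. The key is to trade the \emph{negative} section $H$ (which has $H|_H=-2D$) for the \emph{positive} section $H_+:=H+2p^*D$. Twisting $\CO_S\oplus\CO_S(-2D)$ by $\CO_S(2D)$ identifies $Y$ with $\PP(\CO_S(2D)\oplus\CO_S)$ carrying $H_+$ as its tautological class; since $D=B'+eC$ is nef on the ruled surface $S$ (indeed $D\cdot B'=0$, $D\cdot C=1$, $D^2=e>0$, so $D$ lies on the boundary of the nef cone of $S$), the bundle $\CO_S(2D)\oplus\CO_S$ is nef and therefore $H_+$ is nef on $Y$. Rewriting, $L=\tfrac12 H_++(2g-2+e)\,p^*C$ and $L'=\tfrac12 H_++e\,p^*C$. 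As $p^*C$ is nef and, for all $g\ge 0$ and $e\ge 3$, the coefficients $2g-2+e$ and $e$ are strictly positive, both $L$ and $L'$ are nonnegative combinations of nef classes, hence nef.

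Next I would compute top self-intersections on $Y$ from the bundle relation $H(H+2p^*D)=0$, which yields $H^3=4D^2=4e$, $H^2\cdot p^*\gamma=-2(D\cdot\gamma)$, $H\cdot(p^*\gamma)^2=\gamma^2$ and $(p^*\gamma)^3=0$ for a divisor class $\gamma$ on $S$. Expanding and using $N'^2=3e$ and $D\cdot N'=2e$ gives $L'^3=2e$, and the analogous expansion gives $L^3=2e+3g-3$; both are positive for every $g\ge 0$ and $e\ge 3$. Combined with nefness this shows that $L$, $L'$, and hence $M$ and $M-f'^*K_B$, are big. Finally $(M-f'^*K_B)^3=2\,L'^3=4e$, as claimed.

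The intersection-number bookkeeping is routine; the only real issue is nefness, and the crux is the passage to $H_+$. Working directly with $H$ would fail because of its negativity, so one must absorb that negativity into $p^*D$ and then verify that the leftover coefficient of $p^*C$ remains positive in every admissible case. This is exactly where the hypothesis $e\ge 3$ is used: it is forced in the genus-zero case, where the relevant coefficient for $L$ degenerates to $e-2$.
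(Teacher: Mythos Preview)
Your proof is correct, but it takes a different route from the paper's. The paper argues nefness directly on $X'$: it computes $M|_\Sigma \sim (N-D) = (2g-2+e)C$ on $\Sigma \simeq S$ (equation \eqref{restriction} in the paper), which handles all curves contained in $\Sigma$; for a curve $\Gamma \not\subset \Sigma$ one has $(\Sigma\cdot\Gamma)\ge 0$, and then the \emph{ampleness} of $N$ (resp.\ of $N'=B'+2eC$) on $S$ gives $(p'^*N\cdot\Gamma)\ge 0$. The top self-intersections are then computed on $X'$ from $M=\Sigma+p'^*N$.

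You instead push everything down along the finite double cover to the $\PP^1$-bundle $Y$, write $L=\tfrac12 H_+ + (2g-2+e)\,p^*C$ and $L'=\tfrac12 H_+ + e\,p^*C$, and invoke the nefness of the tautological class $H_+$ of $\PP(\CO_S(2D)\oplus\CO_S)$ (which follows from $D$ being nef). This is a cleaner, more structural argument and the Grothendieck relation $H(H+2p^*D)=0$ makes the intersection bookkeeping transparent. The trade-off is that the paper's hands-on computation of $M|_\Sigma$ is not a detour: the identity \eqref{restriction} is reused immediately afterwards to characterize the $M$-trivial curves as precisely the rulings of $\Sigma$, which is the input for constructing the contraction $\phi:X'\to X$. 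Your approach establishes Lemma~\ref{lemma1} just as well, but if you continue with the paper you will still need to go back and compute $M|_\Sigma$ separately.
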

	
	\begin{proof}
		Notice that $\Sigma \simeq S$. Under this isomorphism, we have
		\begin{equation} \label{restriction}
		M|_\Sigma = \Sigma|_\Sigma + (p'^*N)|_\Sigma \sim (N - D)|_S = (2g-2+e) C.
		\end{equation}
		Thus for any curve $\Gamma$ on $X$, if $\Gamma \subset \Sigma$, then $(M \cdot \Gamma) \ge 0$. If not, then $(M\cdot \Gamma)\ge 0$. Using the fact that $N$ is ample, we conclude that $(M \cdot \Gamma) \ge (p'^*N \cdot \Gamma) \ge 0$. A similar argument gives the nefness of $M-f'^*K_B$.
		
		On the other hand, we have
		\begin{eqnarray*}
			M^3 & = & \Sigma^3 + 3 (\Sigma^2 \cdot (p'^*N)) + 3 (\Sigma\cdot (p'^*N)^2) \\
			& = & D^2 - 3(D\cdot N) + 3N^2 \\
			& = & 4e + (6g-6).
		\end{eqnarray*}
		Thus $M$ is also big.
		
		Finally, since $M -f'^*K_B = \Sigma + p'^*(B' + 2eC)$, a similar calculation shows that $(M -f'^*K_B)^3 = 4e$.
	\end{proof}
	
	\begin{lemma}
		The divisor $3M-K_{X'}$ is nef and big.
	\end{lemma}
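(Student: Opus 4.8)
The plan is to follow exactly the template used in the proof of Lemma \ref{lemma1}, since $3M - K_{X'}$ can be put into the same shape as $M$ and $M - f'^* K_B$. First I would rewrite the divisor in terms of $\Sigma$ and the pullback $p'^* N$: from $K_{X'} = M + \Sigma$ together with $M = \Sigma + p'^* N$ one gets $3M - K_{X'} = 2M - \Sigma = \Sigma + 2 p'^* N$. This is the crucial normal form, because it is again a sum of $\Sigma$ and a nonnegative multiple of an ample pullback, so the same case analysis applies verbatim.

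For nefness I would split the curves $\Gamma \subset X'$ into two cases as in Lemma \ref{lemma1}. If $\Gamma \not\subset \Sigma$, then $\Sigma \cdot \Gamma \ge 0$ and $(p'^* N)\cdot \Gamma \ge 0$ because $N$ is ample on $S$, hence $(3M - K_{X'}) \cdot \Gamma \ge 0$. If $\Gamma \subset \Sigma$, I would restrict to $\Sigma \cong S$. From the identity $M|_\Sigma = \Sigma|_\Sigma + N \sim (N - D)$ appearing in Lemma \ref{lemma1} one extracts $\Sigma|_\Sigma \sim -D$, so $(3M - K_{X'})|_\Sigma \sim -D + 2N$. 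Substituting $D = B' + eC$ and $N = B' + (2g - 2 + 2e)C$ gives the class $B' + (4g - 4 + 3e)C$ on $S$. I then invoke the standard nefness criterion on the ruled surface $S = \PP(\CO_B \oplus \CO_B(-D_B))$, where $B'^2 = -e$, namely that $B' + bC$ is nef precisely when $b \ge e$. Since $4g - 4 + 3e \ge e$ is equivalent to $2g - 2 + e \ge 0$, which holds for all $g \ge 0$ and $e \ge 3$, the restriction is nef, and the nefness of $3M - K_{X'}$ follows.

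For bigness, since $3M - K_{X'}$ is already nef it suffices to verify that its top self-intersection is positive. I would expand $(\Sigma + 2 p'^* N)^3$ using the intersection numbers already available from Lemma \ref{lemma1}, namely $\Sigma^3 = D^2$, $\Sigma^2 \cdot (p'^* N) = -D \cdot N$, $\Sigma \cdot (p'^* N)^2 = N^2$, and $(p'^* N)^3 = 0$ (the last because $N$ is pulled back from the surface $S$). This yields $(3M - K_{X'})^3 = D^2 - 6 D\cdot N + 12 N^2 = 25e + 36g - 36$, which is at least $39$ when $e \ge 3$ and $g \ge 0$. A nef divisor with positive top self-intersection is big, so $3M - K_{X'}$ is nef and big.

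All computations are routine once the normal form $\Sigma + 2p'^* N$ is in hand, so I do not expect a serious obstacle; the only point requiring care is the correct identification $\Sigma|_\Sigma \sim -D$ and the ensuing verification of the ruled-surface nef criterion, since the whole argument reduces to the single numerical inequality $2g - 2 + e \ge 0$. It is reassuring that this is exactly the coefficient $2g - 2 + e$ that already surfaced in $M|_\Sigma$ in Lemma \ref{lemma1}.
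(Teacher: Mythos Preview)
Your argument is correct but takes a longer route than the paper. You write $3M - K_{X'} = \Sigma + 2p'^*N$ and then redo the full case analysis from Lemma \ref{lemma1} plus an explicit cube computation. The paper instead uses the decomposition $3M - K_{X'} = M + p'^*N$ (this is the same identity: $2M - \Sigma = M + (M - \Sigma) = M + p'^*N$). Since $M$ was already shown to be nef and big in Lemma \ref{lemma1} and $p'^*N$ is nef (being the pullback of an ample divisor on $S$), the sum is nef; and a nef plus a nef-and-big divisor is automatically big. So the paper's proof is one line, reusing the work already done. Your approach is self-contained and gives the exact value $(3M-K_{X'})^3 = 25e + 36g - 36$ as a bonus, but the recomputation of nefness on $\Sigma$ and the cube expansion are unnecessary once you spot the decomposition $M + p'^*N$.
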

	
	\begin{proof}
		Since $3M-K_{X'} = p'^*N + M$ and both $N$ and $M$ are nef and big, the conclusion follows.
	\end{proof}
	
	\begin{lemma}
		For any irreducible curve $\Gamma$ on $X$, $(M \cdot \Gamma) = 0$ if and only if $\Gamma$ is a ruling on $\Sigma$.
	\end{lemma}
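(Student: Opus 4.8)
The plan is to exploit the two expressions for $M$, namely $M = \Sigma + p'^*N$ together with the restriction formula \eqref{restriction} giving $M|_\Sigma = (2g-2+e)C$ under the isomorphism $\Sigma \cong S$. Note first that $2g-2+e \ge e-2 \ge 1 > 0$ since $e \ge 3$, so this restriction is a strictly positive multiple of the ruling class $C$ of $S \to B$. For the ``if'' direction, if $\Gamma$ is a ruling on $\Sigma$, then $\Gamma$ is numerically a fiber $C$ inside $\Sigma$, and hence $(M \cdot \Gamma) = (M|_\Sigma \cdot \Gamma)_\Sigma = (2g-2+e)(C \cdot C)_S = 0$, because distinct fibers of $S \to B$ are disjoint so that $(C\cdot C)_S = 0$.

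For the ``only if'' direction I would take an irreducible curve $\Gamma$ with $(M \cdot \Gamma) = 0$ and split into two cases according to whether $\Gamma \subset \Sigma$. If $\Gamma \subset \Sigma$, then as above $(M \cdot \Gamma) = (2g-2+e)(C \cdot \Gamma)_S$, so the vanishing forces $(C \cdot \Gamma)_S = 0$; since $C$ is the ruling of the geometrically ruled surface $S$, an irreducible curve meeting $C$ trivially must be a fiber of $S \to B$, i.e. a ruling on $\Sigma$, as desired.

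The substantial case is $\Gamma \not\subset \Sigma$, where I expect to derive a contradiction. Here both $(\Sigma \cdot \Gamma) \ge 0$ and $(p'^*N \cdot \Gamma) \ge 0$, so $(M\cdot\Gamma)=0$ forces each summand to vanish. Since $N$ is ample on $S$, the relation $(p'^*N \cdot \Gamma) = (N \cdot p'_*\Gamma) = 0$ gives $p'_*\Gamma = 0$, i.e. $\Gamma$ is contracted by $p'$ and therefore lies in a fiber of $p': X' \to S$. I would then record two geometric facts: first, that $\Sigma$ is a \emph{section} of $p'$ — indeed $\Sigma = \tfrac{1}{2}\pi^*H$ is the reduced ramification divisor of $\pi$ over the section $H$ of $Y \to S$, and $p'|_\Sigma$ factors as the composite of the isomorphisms $\Sigma \xrightarrow{\sim} H \xrightarrow{\sim} S$; second, that every fiber of $p'$ is irreducible. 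Granting these, for a fiber $\Phi$ of $p'$ one has $\pi_*[\Phi] = 2[\PP^1_s]$ (with $\PP^1_s = p^{-1}(s)$), whence $(\Sigma \cdot \Phi) = \tfrac{1}{2}(\pi^*H \cdot \Phi) = \tfrac{1}{2}(H \cdot 2[\PP^1_s]) = 1$. Thus an irreducible $\Gamma$ contracted by $p'$ coincides with such a $\Phi$ and satisfies $(\Sigma \cdot \Gamma) = 1 > 0$, contradicting $(\Sigma \cdot \Gamma) = 0$. Hence no curve in this case is $M$-trivial, and every $M$-trivial curve is a ruling on $\Sigma$.

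The main obstacle is the irreducibility of \emph{all} fibers of $p'$. A fiber over $s \in S$ is the double cover of $\PP^1_s$ branched along $(H+H')|_{\PP^1_s}$, a divisor of degree $6$; such a cover is disconnected, and so reducible, precisely when its branch divisor is twice an effective divisor. But $H$ is a section disjoint from $H'$, so $H|_{\PP^1_s}$ is a single reduced point appearing with odd multiplicity $1$ in the branch divisor and not cancelled by any point of $H'|_{\PP^1_s}$; consequently the branch divisor is never $2$-divisible and the double cover stays irreducible for every $s$. Here one uses that the general $H' \in |5H + 10p^*D| = |5(H+2p^*D)|$, the latter being base point free, contains no fiber of $p$, so that each fiber of $p'$ is an honest reduced double cover. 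This is exactly the step where the specific numerology of the construction — the odd contribution of the section $H$ to the branch locus — is essential.
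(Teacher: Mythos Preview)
Your argument is correct, but it takes a longer route than the paper's, and the detour introduces a step (irreducibility of \emph{all} fibers of $p'$) that, while true, is not needed and whose justification you give is slightly imprecise. Two remarks on that step: first, ``reducible $\Leftrightarrow$ branch divisor is twice an effective divisor'' is really a criterion for the cover to \emph{split} (equivalently, for the defining line bundle to be trivial), not for reducibility in general; the cleaner way to see connectedness of every fiber is that the double-cover line bundle $\CO_Y(3H+5p^*D)$ restricts to $\CO_{\PP^1}(3)$ on each $\PP^1_s$, which is nontrivial. Second, you still need to exclude non-reduced fibers (those with $\PP^1_s \subset H'$); your genericity argument for $H'$ handles this, but it is an extra verification.

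The paper sidesteps all of this by reversing your case split: rather than dividing into $\Gamma\subset\Sigma$ versus $\Gamma\not\subset\Sigma$, it divides into $\Gamma$ vertical versus horizontal for $p'$. If $\Gamma$ is vertical, then $(p'^*N\cdot\Gamma)=0$ and hence $(M\cdot\Gamma)=(\Sigma\cdot\Gamma)=\tfrac12(\pi^*H\cdot\Gamma)=\tfrac12(H\cdot\pi_*\Gamma)>0$, simply because $\pi_*\Gamma$ is a positive multiple of a fiber $\PP^1_s$ of $p$ and $H$ is a section, so $(H\cdot\PP^1_s)=1$. No information about the components of the fiber of $p'$ is used. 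If instead $\Gamma$ is horizontal, ampleness of $N$ gives $(p'^*N\cdot\Gamma)>0$, so $(M\cdot\Gamma)=0$ forces $(\Sigma\cdot\Gamma)<0$, hence $\Gamma\subset\Sigma$, and then your own Case~1 (via the restriction formula \eqref{restriction}) finishes. In effect, the paper's one-line treatment of vertical curves already contains your computation $(\Sigma\cdot\Phi)=1$ for a full fiber $\Phi$, but applied directly to any vertical component, which makes the irreducibility discussion unnecessary.
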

	
	\begin{proof}
		The ``only if" part follows from \eqref{restriction}. Now suppose $(M \cdot \Gamma) = 0$. Then $\Gamma$ can not be vertical with respect to $p'$, otherwise $(M \cdot \Gamma) = \frac{1}{2}((\pi^*H) \cdot \Gamma) > 0$. Thus $\Gamma$ is horizontal and it follows that $((p'^*N) \cdot \Gamma) > 0$. Therefore, $(\Sigma \cdot \Gamma) < 0$ and $\Gamma \subset \Sigma$. By \eqref{restriction} again, we conclude that $\Gamma$ has to be a ruling.
	\end{proof}
	
	Now we follow the argument in \cite[(3.4)]{Kobayashi} to get the minimal model $\phi: X' \to X$ of $X'$ induced by the linear system $|mM|$ for $m \gg 0$. Here $\phi$ is just a contraction of all rulings on $\Sigma$. Notice that $X$ is smooth, $K_X$ is ample and $\phi^*K_X = M$. Moreover, $\phi$ induces a fibration
	$$
	f: X \to B
	$$
	such that $f' = \phi \circ f$. Notice that by Lemma \ref{lemma1},
	$$
	K_{X/B}^3 = (\phi^*K_{X/B})^3 = (M - f'^*K_B)^3 = 4e.
	$$
	
	\begin{lemma}
		We have $\deg f_* \omega_{X/B} = 3e$.
	\end{lemma}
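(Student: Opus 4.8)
The plan is to compute $\deg f_* \omega_{X/B}$ by reducing everything to the birational model $X'$, where all the geometry has already been set up, and then transferring the computation through the contraction $\phi: X' \to X$. Since $\phi$ contracts only the rulings on $\Sigma$, which are rational curves, I expect that $\phi$ does not affect the pushforward of the relative dualizing sheaf, so that $f_* \omega_{X/B} = f'_* \omega_{X'/B}$. More precisely, I would argue that $\phi_* \omega_{X'} = \omega_X$ and that $R^i\phi_* \omega_{X'} = 0$ for $i>0$ (this is a standard consequence of $\phi$ being a contraction to a smooth variety, e.g.\ via Grauert-Riemenschneider or the fact that $\phi$ is crepant in the sense that $\phi^* K_X = M$ differs from $K_{X'}$ by an effective $\phi$-exceptional divisor $\Sigma$). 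Consequently the Leray spectral sequence for $f' = f \circ \phi$ collapses and yields $f_* \omega_{X/B} = f'_* \omega_{X'/B}$.

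Once the reduction to $X'$ is in place, the remaining task is identical in spirit to Proposition \ref{degree}: I would use the factorization $f' = p_B \circ p \circ \pi$ and compute the pushforward stage by stage. From the double cover formula $\pi_* \omega_{X'/B} = \omega_{Y/B} \oplus \CO_Y(H + 3p^*D)$, applying $p_*$ and using that $Y \to S$ is a $\PP^1$-bundle with $p_* \CO_Y(H + 3p^*D) = \CO_S(D) \oplus \CO_S(3D)$ (together with $p_* \omega_{Y/B}$ contributing nothing positive on the base, since the $\PP^1$-bundle fibers are rational), I obtain a direct sum of line bundles on $S$ built from $\CO_S(D)$ and $\CO_S(3D)$. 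The final step is to push forward along $p_B: S \to B$, where $S = \PP(\CO_B \oplus \CO_B(-D_B))$ is itself a $\PP^1$-bundle and $D = B' + eC$, and then take degrees.

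The concrete degree calculation is the routine part. Since $D = B' + eC$, I would compute ${p_B}_* \CO_S(D)$ and ${p_B}_* \CO_S(3D)$ explicitly via $\Pic(S)$ and the projection formula, track the resulting twists on $B$, and sum the degrees. The expected outcome, matching the slope $K_{X/B}^3 / \deg f_* \omega_{X/B} = 4e/(3e) = 4/3$ claimed in Proposition \ref{example}, is that the positive-degree contributions on $B$ total $3e$; the $\CO_S(3D)$ term should supply the bulk of this via the coefficient $3$ of the fiber class $C$. I would cross-check the final number against the expression in Proposition \ref{fiberdata} and Proposition \ref{degree} specialized to $n=3$ with the base data $D_A \rightsquigarrow D$ on $S$.

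The main obstacle, I anticipate, is the justification that $\phi$ preserves the pushforward of the dualizing sheaf, i.e.\ that contracting the rulings on $\Sigma$ introduces no error term in $\deg f_* \omega_{X/B}$. The subtlety is that $\omega_{X'/B}$ and $\phi^* \omega_{X/B}$ differ by the class of $\Sigma$ (indeed $M = K_{X'} - \Sigma$), so one must verify carefully that this discrepancy, being supported on the $\phi$-exceptional locus, contributes nothing to the direct image over $B$. I would handle this by noting that $f_* \omega_{X/B} = f_*(\phi_* \omega_{X'/B})$ follows from $\phi_* \omega_{X'} = \omega_X$ (smoothness of $X$ and rationality of the contracted rulings guaranteeing no higher direct images), so that the line-bundle computation on $X'$ legitimately computes the invariant on $X$. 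Everything downstream is then a clean $\PP^1$-bundle bookkeeping exercise.
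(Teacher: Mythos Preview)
Your overall strategy is exactly the paper's: reduce to $X'$ via $\phi_*\omega_{X'}=\omega_X$ (the paper phrases this simply as $K_{X'}=\phi^*K_X+\Sigma$ with $\Sigma$ effective $\phi$-exceptional, hence $f_*\omega_{X/B}=f'_*\omega_{X'/B}$), and then push $\omega_{X'/B}$ down through $f'=p_B\circ p\circ \pi$.

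There is, however, a concrete error in your intermediate formulas. You write $\pi_*\omega_{X'/B}=\omega_{Y/B}\oplus\CO_Y(H+3p^*D)$ and expect the $p_*$ to produce $\CO_S(D)\oplus\CO_S(3D)$. Those are the Section~\ref{calculation} formulas, valid there because $K_{Z/B}=K_{A\times B/B}=0$. In the present construction the base of the $\PP^1$-bundle is the ruled surface $S$ with $K_{S/B}\equiv -2B'-eC\neq 0$, so
\[
\omega_{Y/B}(3H+5p^*D)=\CO_Y\bigl(H+p^*(3D+K_{S/B})\bigr)=\CO_Y\bigl(H+p^*(B'+2eC)\bigr),
\]
not $\CO_Y(H+3p^*D)$. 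With your formula, pushing to $B$ would give a bundle of rank $h^0(\PP^1,\CO(1))+h^0(\PP^1,\CO(3))=6$, contradicting $\rank f_*\omega_{X/B}=p_g(F)=2$; this is an immediate sanity check. The correct computation (as in the paper) yields
\[
p_*\pi_*\omega_{X'/B}=\CO_S(B'+2eC)\oplus\CO_S(-B'),\qquad
{p_B}_*\bigl(p_*\pi_*\omega_{X'/B}\bigr)=\CO_B(2eP)\oplus\CO_B(2eP-D_B),
\]
since ${p_B}_*\CO_S(-B')=0$, and the degrees $2e+(2e-e)=3e$ give the claim. So your plan is right, but the cross-check you propose against Proposition~\ref{degree} must be done with the extra $K_{S/B}$ twist inserted; otherwise the numbers will not come out.
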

	
	\begin{proof}
		Recall that $K_{X'} = \phi^*K_X + \Sigma$, where $\Sigma$ is $\phi$-exceptional. It follows that
		$$
		f_* \omega_{X/B} = f'_*\omega_{X'/B}.
		$$
		First, by the projection formula, we have
		$$
		\pi_*\omega_{X'/B} = \CO_Y(p^*(B'+2eC) + H) \oplus \CO_Y(p^*(B'+2eC-5D) - 2H).
		$$
		Then projecting via $p$, we obtain
		$$
		p_* (\pi_* \omega_{X'/B}) = \CO_S(B' + 2eC) \oplus \CO_S(-B').
		$$
		Finally, we deduce that
		$$
		{p_B}_* \left(p_* (\pi_* \omega_{X'/B}) \right) = \CO_B(2eP) \oplus \CO_B(2eP - D_B),
		$$
		where $P = p_B(C)$. Thus the proof is completed since $f' = p_B \circ p \circ \pi$.
	\end{proof}
	
	\subsection{Proof of Proposition \ref{example}}
	Now the proof is obvious, since $K^3_{X/B} = 4e$ and $\deg f_*\omega_{X/B} = 3e$ by the previous subsection. Notice that in this case, the general fiber $F$ of $f$ is a $(1, 2)$-surface.
	
	\begin{remark} \label{remark: (1,2) surface}
		In the above example, if we assume $g(B) = 1$, then we have $K_X^3 = 4e$ and $\deg f_*\omega_X = 3e$. Since $h^1(F, \CO_F) = 0$ and $R^1f_* \omega_X$ is torsion free, we deduce that $R^1f_* \omega_X = 0$. By \cite[Lemma 2.4]{Ohno}, $\deg f_* \omega_X = \chi(X, \omega_X)$. Therefore, we obtain an irregular $3$-fold $X$ of general type with $K_X^3 = \frac{4}{3}\chi(X, \omega_X)$.
	\end{remark}

	\section{Set up and some known cases}
	
	\subsection{Setting} \label{setting}
	From this section till the end of this paper, we always assume that $f: X \to B$ is a relatively minimal $3$-fold fibration over a curve with general fiber $F$ a surface of general type. Since terminal singularities in dimension three are isolated, we may assume that $F$ is actually  (smooth) minimal.
	
	\subsection{Known cases}
	In this subsection, we apply the result of Ohno \cite{Ohno} to prove Theorem \ref{main} when numerical invariants of $F$ are not small.
	
	\begin{prop}\label{prop:general case}
		Let $f: X \to B$ be as above. Then we have
		$$
		K_{X/B}^3\ge 2\mathrm{deg}f_{*}\omega_{X/B}
		$$
		unless $F$ belongs to one of the following three types:
		\begin{itemize}
			\item[(i)] $(K_F^2, p_g(F))=(1,1)$;
			\item[(ii)] $(K_F^2, p_g(F))=(1,2)$;
			\item[(iii)] $|K_F|$ is not composed with a pencil and $p_g(F) = 3$.
		\end{itemize}
	\end{prop}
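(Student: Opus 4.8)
The plan is to feed the canonical geometry of the general fiber $F$ into the slope lower bounds that Ohno derived from Xiao's method [Ohno, Propositions 2.1 and 2.6], and to check by a short case analysis that, outside the three listed configurations, these bounds already force the slope $s := K_{X/B}^3/\deg f_*\omega_{X/B}$ to be at least $2$. No characteristic $p$ input is needed here; this is the classical part of the argument.

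First I would clear away the degenerate situations. If $\deg f_*\omega_{X/B}=0$ -- in particular whenever $p_g(F)=0$, since then the torsion-free sheaf $f_*\omega_{X/B}$ has generic rank $0$ and hence vanishes -- the assertion reduces to $K_{X/B}^3\ge 0$, which holds because $K_{X/B}$ is nef. So from now on $p_g(F)\ge 1$ and $\deg f_*\omega_{X/B}>0$. The case $p_g(F)=1$ can then be settled directly: writing $L=f_*\omega_{X/B}$, the evaluation $f^{*}L\to\omega_{X/B}$ yields an effective divisor $D\sim K_{X/B}-f^{*}L$ restricting to the unique canonical divisor on a general fiber, and since $K_{X/B}$ is nef and $(f^{*}L)^2=0$ we get
\[
K_{X/B}^3=K_{X/B}^2\cdot(f^{*}L+D)\ \ge\ K_{X/B}^2\cdot f^{*}L=(\deg L)\,K_F^2=K_F^2\cdot\deg f_*\omega_{X/B}.
\]
Thus $s\ge K_F^2\ge 2$ unless $K_F^2=1$, i.e.\ unless $F$ is of type (i).

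For $p_g(F)\ge 2$ I would split according to whether $|K_F|$ is composed with a pencil. If it is not, the canonical image is a surface, which forces $p_g(F)\ge 3$; in this regime Ohno's estimate reads $s\ge \frac{4(p_g(F)-2)}{p_g(F)}$, which is $\ge 2$ precisely when $p_g(F)\ge 4$, while the remaining value $p_g(F)=3$ is exactly the excluded type (iii). If instead $|K_F|$ is composed with a pencil, I would invoke Ohno's companion estimate, in which the slope is controlled by $p_g(F)$ together with the genus of the pencil (note that $p_g(F)=2$ always lands here, the pencil being $|K_F|$ itself, with fiber genus $K_F^2+1$). A direct substitution shows this bound exceeds $2$ as soon as $K_F^2\ge 2$, so the only surviving exception is $K_F^2=1$, necessarily with $p_g(F)=2$, i.e.\ type (ii).

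The step I expect to be the real work is the boundary bookkeeping in the composed-with-a-pencil case: one must confirm that Ohno's estimate crosses the threshold $2$ exactly at $K_F^2=1$, so that the leftover configurations coincide with (i)--(iii) and no further small surface slips through, and one must likewise verify that the surface-image bound crosses $2$ exactly at $p_g(F)=4$. Once these two thresholds are matched against the exceptional list, the remaining cases follow by direct substitution into the cited inequalities, leaving precisely types (i)--(iii) to be handled later by the characteristic $p$ machinery.
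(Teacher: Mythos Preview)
Your approach is essentially the same as the paper's: a case split on $p_g(F)$ feeding into Ohno's slope bounds, with the excluded configurations (i)--(iii) emerging as the cases where those bounds fall below $2$. Two remarks on the execution.

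For $p_g(F)=1$, your direct computation $K_{X/B}^3 \ge K_{X/B}^2\cdot f^*L = K_F^2\,\deg f_*\omega_{X/B}$ is exactly the content of \cite[Proposition~2.6]{Ohno}, so here you have re-derived what the paper simply cites. This is a pleasant self-contained substitute.

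In the pencil case you make things harder than necessary. The paper does not bring in the genus of the pencil or $K_F^2$ at all: Ohno's bound in the composed-with-a-pencil situation is simply $s \ge \frac{4(p_g(F)-1)}{p_g(F)}$, valid whenever $(K_F^2,p_g(F))\ne(1,2)$, and this is already $\ge 2$ for every $p_g(F)\ge 2$. So there is no ``threshold at $K_F^2=1$'' to locate; the single exclusion $(1,2)$ is built into the hypothesis of Ohno's proposition, and everything else with $p_g\ge 2$ and a pencil is handled in one line. Your route via the fiber genus would also work, but it replaces a one-step citation with a computation whose details you have not written out (and the claim ``fiber genus $K_F^2+1$'' is not generally correct once the pencil has base points or a fixed part).
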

	
	\begin{proof}
		We first consider the case when $p_g(F)=0$. In this case, $f_{*}{\omega_{X/B}}=0$. Thus the result holds trivially.
		
		When $p_g(F) = 1$ and $K_F^2 > 1$, we have $K_{X/B}^3\ge 2\mathrm{deg}f_{*}\omega_{X/B}$ by \cite[Proposition 2.6]{Ohno}.
		
		In the following, we assume that $p_g(F) \ge 2$. If $|K_F|$ is composed with a pencil and $(K_F^2, p_g(F)) \ne (1, 2)$, by \cite[Proposition 2.1(1)]{Ohno}, we have
		$$
		K_{X/B}^3\ge\frac{4(p_g(F)-1)}{p_g(F)}\mathrm{deg}f_{*}\omega_{X/B}\ge 2\mathrm{deg}f_{*}\omega_{X/B}.
		$$
		If $|K_F|$ is not composed with a pencil (a priori, $p_g(F) \ge 3$) and $p_g(F) \ge 4$, by \cite[Proposition 2.1(1)]{Ohno}, we have
		$$
		K_{X/B}^3\ge\frac{4(p_g(F)-2)}{p_g(F)}\mathrm{deg}f_{*}\omega_{X/B}\ge 2\mathrm{deg}f_{*}\omega_{X/B}.
		$$
		
		As a result, there are three exceptions in total as described in the proposition, and the proof is completed.
	\end{proof}
	
	\subsection{Integral model} \label{interal model}
	
	Let $f: X \to B$ be as in \S \ref{setting}. Let $\pi: Y \to X$ be a resolution of singularities of $X$, and let $\tilde{f}: Y\to B$ be the induced fibration. We will set up the integral model of $f$ in this section, and this integral model will be used extensively in this paper.
	
	By the Lefschetz principle, we may assume that $f$ is defined over a finite extension $k$ over $\QQ$. Let $\mathcal{Z}$ be a scheme of finite type over $\ZZ$ with  function field $k$. Let $\mathcal{Y}\to\mathcal{X} \to \mathcal{B} \to \mathcal{Z}$ be a projective and flat morphism extending $\tilde{f}$. Replacing $\mathcal{Z}$ by a Zariski open set and taking a finite \'etale cover if necessary, we may assume that the generic fiber $\tilde{f}: Y \to B$ has an integral model $\mathcal{Y}\to \mathcal{X} \to \mathcal{B} \to \mathcal{Z}$ such that
	\begin{itemize}
		\item[(i)] the morphisms $\mathcal{Y} \to \mathcal{Z}$ and $\mathcal{B}\rightarrow\mathcal{Z}$ are projective and smooth;
		\item[(ii)] $\tilde{f}: \mathcal{Y} \to \mathcal{B}$ and $f: \mathcal{X}\to \mathcal{B}$ are projective, flat and Cohen-Macaulay morphisms of pure relative dimension two;
		\item[(iii)] for any closed point $z \in \mathcal{Z}$, the general fiber of $f_z: \mathcal{X}_z \to \mathcal{B}_z$ is a $(K_F^2, p_g(F))$ surface.
	\end{itemize}
	
	In the following context, we usually fix an ample and effective $\QQ$-divisor $A$ on $B$. By the base-point-freeness theorem (see \cite[Theorem 3.3]{Kollar_Mori} for instance), the $\QQ$-divisor $K_{X/B} + f^*A$ is semi-ample.\footnote{In fact, we may choose an integer $a \gg 0$ such that $aA$ is integral with $\deg(aA - K_B) > 0$. Then the two divisors $a(K_{X/B} +f^*A) - K_X$ and $K_{X/B} + f^*A$ are both nef and big. Thus the semi-ampleness of $K_{X/B} + f^*A$ follows from the base-point-freeness theorem.} Similarly to \cite[Section 4B]{Patakfalvi}, we may assume that
	\begin{itemize}
		\item [(iv)] the divisor $A$ extends to a relatively ample and effective $\QQ$-divisor $\mathcal{A}$ on $\mathcal{B}$, and $K_{\mathcal{X}_z/\mathcal{B}_z} + f_z^*\mathcal{A}_z$ is semi-ample. In particular, $\pi_z^*(K_{\mathcal{X}_z/\mathcal{B}_z} + f_z^*\mathcal{A}_z)$ is a semi-ample $\QQ$-divisor on $\mathcal{Y}_z$.
	\end{itemize}
	
	The above setting (i)--(iv) will be used throughout the rest of the paper.

	\section{Proof for $(1, 2)$-surface fibers} \label{12proof}
	
	Let $f: X \to B$ and $F$ be as in \S \ref{setting}. In this section, we mainly focus on the case when $F$ is a $(1, 2)$-surface. This is the first exceptional case as shown in Proposition \ref{prop:general case}, and it turns out that this is the most difficult case, at least to us.
	
	\subsection{Resolving base loci of linear systems} We need the following lemma.
	\begin{lemma} {\cite[Lemma 4.2]{Chen}} \label{lem:resolution}
		Suppose that $Z$ is a smooth projective $3$-fold. Let $M$ be a divisor on $Z$ such that $h^0(Z, M) \ge 2$ and that the base locus of $|M|$ has  codimension at least two. Then
		there are successive blow-ups
		$$
		\beta: Y = X_{n+1} \stackrel{\pi_n} \to X_{n} \to \cdots \to X_{i+1} \stackrel{\pi_{i}} \to X_{i} \to \cdots \to X_1 \stackrel{\pi_0} \to Z
		$$
		such that $\pi_i$ is a blow-up along a smooth irreducible center $W_i$, where $W_i$ is contained in the base locus of the movable part of $(\pi_0\circ \pi_1 \circ \cdots \circ \pi_{i-1})^*|M|$.
		Moreover, the morphism $\beta=\pi_0 \circ \cdots \circ \pi_n$ satisfies the following properties:
		\begin{enumerate}
			\item Denote by $N$ the movable part of $\beta^*|M|$. Then $|N|$ is base point free.
			\item The following formulae
			\begin{equation} \label{eq:blowupadjunction}
			K_{Y}=\beta^*K_Z+\sum_{i=0}^{n}a_i E_i,\quad  \beta^*M = N+\sum_{i=0}^{n}b_i E_i
			\end{equation}
			hold. Here for any $0 \le i \le n$, $E_i$ is the strict transform of the exceptional divisor of $\pi_i$, and
			$a_i$ and $b_i$ are positive integers satisfying $a_i \le 2b_i$.
		\end{enumerate}
		
	\end{lemma}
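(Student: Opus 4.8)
The plan is to produce the sequence $\beta$ from Hironaka's principalization of the base ideal of $|M|$, and then to verify the numerical relation $a_i \le 2b_i$ by an induction that tracks how the discrepancy and multiplicity coefficients accumulate across the successive blow-ups. The existence part is essentially a citation; the arithmetic of the coefficients is where the dimension hypothesis is used.

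First, for the existence of $\beta$, I would let $\mathfrak{b} \subset \CO_Z$ be the base ideal of $|M|$, so that its cosupport $V(\mathfrak{b})$ is the base locus, which by hypothesis has codimension $\ge 2$. Applying Hironaka's principalization of ideal sheaves in the smooth $3$-fold $Z$ (equivalently, resolving the indeterminacy of the rational map defined by $|M|$) produces a sequence of blow-ups along smooth centers after which the total transform of $\mathfrak{b}$ becomes locally principal, i.e.\ the movable part $|N|$ of $\beta^*|M|$ is base point free. At each stage the center is chosen inside the cosupport of the non-principal part of the transformed ideal, which is exactly the base locus of the movable part of the current pullback of $|M|$; since the irreducible components of a smooth center are pairwise disjoint, I may blow them up one at a time and thus take each $W_i$ irreducible. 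As the movable part never carries a fixed divisorial component, every base locus encountered has codimension $\ge 2$, so each $W_i$ has codimension $c_i$ with $2 \le c_i \le 3$. All intermediate $X_i$ are smooth, being blow-ups of smooth varieties along smooth centers. This yields assertion (1) together with the requirement that each $W_i$ lie in the base locus of the movable part.

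Next I would set up the bookkeeping for (2). Write $g_i = \pi_0 \circ \cdots \circ \pi_{i-1} \colon X_i \to Z$, let $\tilde E_i$ be the full exceptional divisor of $\pi_i$, and by slight abuse let $E_j$ denote the strict transform of $\tilde E_j$ on whichever stage is under discussion. Let $m_i = \mathrm{mult}_{W_i}|N_i|$ be the multiplicity of the base scheme of the current movable part $|N_i|$ along $W_i$, and set $e_{j,i} = \mathrm{mult}_{W_i} E_j$ for $j<i$. Since blowing up a smooth center of codimension $c_i$ in a smooth variety contributes discrepancy $c_i - 1$, and since $\pi_i^* E_j = E_j + e_{j,i}\tilde E_i$, comparing canonical classes and movable parts before and after $\pi_i$ (and noting that the coefficient of a strict transform is unchanged by a later blow-up) gives the recursions $a_i = (c_i - 1) + \sum_{j<i} a_j\, e_{j,i}$ and $b_i = m_i + \sum_{j<i} b_j\, e_{j,i}$. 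Both are positive integers, since $c_i - 1 \ge 1$, $m_i \ge 1$, and the $e_{j,i}$ are non-negative integers.

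Finally, the inequality $a_i \le 2 b_i$ follows by induction on $i$. In dimension three the raw discrepancy obeys $c_i - 1 \le 2 \le 2 m_i$, where the second inequality uses that $W_i$ lies in the base locus of $|N_i|$ and hence $m_i \ge 1$; meanwhile the multiplicities $e_{j,i}$ are non-negative, so the inductive hypotheses $a_j \le 2 b_j$ for $j<i$ propagate through the transfer terms to give $\sum_{j<i} a_j\, e_{j,i} \le 2 \sum_{j<i} b_j\, e_{j,i}$. Adding the two estimates yields $a_i \le 2 b_i$. I expect the accumulation to be the one genuinely delicate point, and the real content beyond invoking Hironaka: later centers $W_i$ may sit inside earlier exceptional divisors, so the strict transforms pick up the correction terms $e_{j,i}$, and one must check that the factor-of-two inequality is preserved under this accumulation rather than merely for a single isolated blow-up. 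The hypothesis $\dim Z = 3$ enters precisely to guarantee $c_i - 1 \le 2 m_i$ at every stage, which is what closes the induction.
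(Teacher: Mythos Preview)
The paper does not supply its own proof of this lemma; it is quoted directly from Chen \cite[Lemma 4.2]{Chen} and used as a black box, so there is nothing to compare your argument against line by line. That said, your sketch is correct and captures exactly the mechanism behind the result: principalize the base ideal by a sequence of smooth-center blow-ups to get (1), and then track the discrepancy and fixed-part coefficients through the recursions $a_i = (c_i-1) + \sum_{j<i} a_j e_{j,i}$ and $b_i = m_i + \sum_{j<i} b_j e_{j,i}$ to get (2). The inductive step $c_i - 1 \le 2 \le 2m_i$ together with $a_j \le 2b_j$ for $j<i$ is precisely where $\dim Z = 3$ is used, and you have identified this correctly.

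One small caveat worth flagging: you invoke Hironaka for the existence of the principalizing sequence, which is of course fine in characteristic zero. The paper, however, immediately follows the statement with a remark that the lemma also holds over fields of positive characteristic, and this positive-characteristic version is what is actually applied later (to the Frobenius base changes $X''_e$). Your argument does not cover that case as written. To extend it, one would replace Hironaka by the known principalization/resolution results for smooth $3$-folds in characteristic $p>0$ (Abhyankar, Cossart--Piltant, Cutkosky); once the existence of the blow-up sequence with smooth centers in the base locus is granted, your coefficient recursion and induction go through verbatim, since they are purely numerical and characteristic-free.
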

	
	\begin{remark} \label{remarkcharp}
		Lemma \ref{lem:resolution} holds also when the base field has positive characteristics, and this is crucial to us.
	\end{remark}
	
	\subsection{The case when $g(B) \le 1$}
	We first consider the case when $g(B)=0$.
	\begin{prop}\label{prop:(1,2) surf b0}
		Let $f: X \to B$ and $F$ be as in \S \ref{setting}. Suppose that $X$ is smooth, $F$ is a $(1, 2)$-surface, and $g(B) = 0$. Then
		$$
		K_{X/B}^3 \ge \frac{4}{3} \deg f_{*}\omega_{X/B}.
		$$
	\end{prop}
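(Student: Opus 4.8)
The plan is to prove the stated inequality by reducing to positive characteristic and running the Frobenius base-change machinery set up for the rest of the paper. Concretely, using the integral model of \S\ref{interal model}, the two numbers $K_{X/B}^3$ and $\deg f_*\omega_{X/B}$ are read off from the fibers $f_z\colon \CX_z\to\CB_z$ over closed points $z\in\CZ$, so it suffices to establish the bound for a relatively minimal fibration $f\colon X\to B$ with $(1,2)$-surface general fiber defined over a field $k$ of characteristic $p>0$ and specialized from characteristic zero. Fixing a small ample effective $\QQ$-divisor $A$ on $B=\PP^1$, I would instead prove $(K_{X/B}+f^*A)^3\ge \tfrac43\deg f_*\omega_{X/B}$ and then let $\deg A\to 0$. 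The whole argument is built on the $e$-th Frobenius base change $f_e\colon X_e\to B$ (normalizing the fiber product of $f$ along the $e$-th power Frobenius of $B$), together with the finite purely inseparable morphism $\psi_e\colon X_e\to X$ of degree $p^e$; the quantities I must control are $(K_{X_e/B}+f_e^*A)^3$, the number $h^0(X_e,\rounddown{K_{X_e/B}+f_e^*A})$, and the behavior of the distinguished section $\Gamma_0\subset X$ cut out by the base point of $|K_F|$ on each fiber.

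The feature that makes $g(B)=0$ the base case is that all relevant sheaves on $B=\PP^1$ split. Since $f_*\omega_{X/B}$ is a nef (Fujita-semipositive) rank-two bundle, it is a sum of line bundles of nonnegative degree, so $h^0(B,f_*\omega_{X/B})=\deg f_*\omega_{X/B}+2$, and the same accounting applies after Frobenius pullback and after tensoring with $\CO_B(A)$. This lets me translate the target into a comparison between the triple self-intersection of $K_{X_e/B}+f_e^*A$ and the dimension $h^0(X_e,\rounddown{K_{X_e/B}+f_e^*A})$, which on $B$ grows like $p^e\deg f_*\omega_{X/B}+O(1)$ because pullback along the $e$-th Frobenius of $\PP^1$ multiplies degrees by $p^e$ and normalization changes global sections only by a bounded amount. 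Geometrically, the relative canonical map realizes $X$ as a genus-two fibration over the ruled surface $P=\PP(f_*\omega_{X/B})\to B$, with $\Gamma_0$ mapping to the relevant section; the sharp constant $\tfrac43$ emerges from combining the slope $2$ of the genus-two fibration direction with the $\PP^1$-bundle structure $P\to B$.

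The technical heart is the slope-like inequality in characteristic $p$: I would resolve the base locus of $|\rounddown{K_{X_e/B}+f_e^*A}|$ by Lemma \ref{lem:resolution} (valid in characteristic $p$ by Remark \ref{remarkcharp}), write $\beta^*\rounddown{K_{X_e/B}+f_e^*A}=N+\sum b_iE_i$ with $N$ base-point-free and $h^0(N)=h^0(\rounddown{K_{X_e/B}+f_e^*A})$, and intersect $N$ against the nef divisor $\beta^*(K_{X_e/B}+f_e^*A)$ using the fibration structure together with the discrepancy bounds $a_i\le 2b_i$. The $(1,2)$-geometry enters precisely here: the canonical pencil on each fiber has a single base point, traced by $\Gamma_0$, and this forces the leading coefficient $\tfrac43$ rather than the weaker bound coming from the general estimate of Theorem \ref{main4}. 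Simultaneously I must compare $K_{X_e/B}$ with $\psi_e^*K_{X/B}$: since $\omega_{X_e/B}$ differs from the pullback of $\omega_{X/B}$ only by the conductor of the normalization and the discrepancies of the base-locus resolution, I obtain $(K_{X_e/B}+f_e^*A)^3=p^e(K_{X/B}+f^*A)^3+\mathrm{error}_e$, with the error concentrated along the Frobenius transform of $\Gamma_0$.

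The main obstacle is exactly the bookkeeping of these two error terms and the proof that they are $o(p^e)$. The correction from the normalization conductor and from the exceptional divisors $E_i$ is supported on a locus of codimension $\ge 2$ lying over $\Gamma_0$, and the whole point of tracking $\Gamma_0$ carefully under Frobenius base change is to bound its contribution to the triple product by a quantity growing strictly slower than $p^e$. Granting this, I divide the combined inequality $p^e(K_{X/B}+f^*A)^3+\mathrm{error}_e\ge \tfrac43\bigl(p^e\deg f_*\omega_{X/B}+O(1)\bigr)$ by $p^e$ and let $e\to\infty$ to kill the errors, obtaining $(K_{X/B}+f^*A)^3\ge \tfrac43\deg f_*\omega_{X/B}$; finally letting $\deg A\to 0$ yields $K_{X/B}^3\ge\tfrac43\deg f_*\omega_{X/B}$. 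The delicate points I expect to fight with are the uniform (in $e$) control of the conductor and of the resolution discrepancies, which is where the special structure of $(1,2)$-surfaces specialized from characteristic zero is indispensable.
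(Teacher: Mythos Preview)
Your plan imports the Frobenius/mod-$p$ limiting machinery that the paper reserves for $g(B)\ge 2$; for $g(B)=0$ the paper gives a direct characteristic-zero argument with no Frobenius at all. The reason the direct argument works here is precisely the feature you yourself flagged: on $B=\PP^1$ one has $f_*\omega_{X/B}=\CO_B(a)\oplus\CO_B(b)$ with $a,b\ge 0$, hence $h^0(X,K_{X/B})=a+b+2$, and Koll\'ar vanishing makes $H^0(X,K_{X/B})\to H^0(F,K_F)$ surjective, so $|K_{X/B}|$ already has two-dimensional image. Resolving the base locus via Lemma~\ref{lem:resolution}, a general $S\in|N|$ is a smooth surface fibered over $B$ in genus-two curves $C$, the horizontal parts of $E|_S$ and $E'|_S$ coincide in a single section $\Gamma$, and---this is the crucial point---$g(\Gamma)=g(B)=0$. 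Adjunction on $\Gamma$ then reads $-2=((K_Y+S+\Gamma)|_S\cdot\Gamma)$, and the bound $E\le 2E'$ converts this into $(\beta^*K_{X/B}|_S\cdot\Gamma)\ge t/3$ with $t\ge a+b$. Together with $(\beta^*K_{X/B}|_S\cdot C)=1$ this gives $K_{X/B}^3\ge\frac{4}{3}t\ge\frac{4}{3}\deg f_*\omega_{X/B}$ in one stroke.

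The Frobenius machinery you describe is exactly what the paper needs when $g(B)\ge 2$: there adjunction on $\Gamma$ contributes $2b-2>0$ instead of $-2$, which ruins the direct inequality, and the iterated Frobenius base change is what makes the ``good'' terms scale like $p^e$ while the genus contribution stays bounded. Running that scheme for $g(B)=0$ is not conceptually wrong, but it is overkill, and your sketch leaves the actual hard step---uniform $o(p^e)$ control of the normalization conductor and resolution discrepancies---as a bare assertion. For $\PP^1$ you can and should bypass all of it: the adjunction computation on the genus-zero section $\Gamma$ already closes the argument without any limiting process.
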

	
	\begin{proof}
		Since $f_{*}\omega_{X/B}$ is a semi-positive vector bundle of rank $2$ and $B \cong \PP^1$, we may assume that
		$$
		f_{*}\omega_{X/B}=\CO_B (a)\oplus \CO_B (b),
		$$
		where $a$ and $b$ are two non-negative integers. We may further assume that $\deg f_* \omega_{X/B} = a+b > 0$. Thus $h^0(X, K_{X/B})= a+b+2 \ge 3$.
		
		Since $h^1(F, \CO_F)=0$, we have $R^1 f_{*}\omega_{X}=0$. Together with Koll\'ar's vanishing theorem, we know that
		$$
		h^1(X, K_X + F) = h^1(B, f_*\omega_X \otimes \CO_B(P))= 0,
		$$
		where $P=f(F)$ is a general point on $B$. Thus we have a surjective map
		$$
		H^0(X, K_{X/B}) \twoheadrightarrow H^0(F, K_F).
		$$
		Since $h^0(X, K_{X/B}) \ge 3$, we know from the above surjectivity that $h^0(K_X+F) \ge 1$. This implies that $|K_{X/B}|$ separates two distinct fibers of $f$. Thus the map given by the complete linear system $|K_{X/B}|$ has the image of dimension two.
		
		Write
		$$
		|K_{X/B}|=|M|+Z,
		$$
		where $|M|$ and $Z$ are respectively the movable and fixed part of $|K_{X/B}|$.
		Since $|K_{X/B}||_F=|K_F|$ and $|K_F|$ has only one isolated base point, we see that the base locus of $|M|$ contains a section of $f$. It also implies that $Z$ is vertical with respect to $f$. Take a birational modification $\beta: Y \rightarrow X$ as in Lemma \ref{lem:resolution}, and write
		$$
		K_Y = \beta^*K_X + E, \quad \beta^*M = N + E'.
		$$
		Here $|N|$ is base point free, and $E$ and $E'$ are both linear combinations of exceptional divisors as in Lemma \ref{lem:resolution} so that $E\le 2E'$.
		
		Take a general member $S \in |N|$. By Bertini's theorem, we may assume that $S$ is a smooth surface. Write $f'= f \circ \beta$ and denote by $F'$ the general fiber of $f'$. Let $\sigma: F' \rightarrow F$ be the natural contraction. Notice that the morphism induced by $|N|$, when restricting on $S$, coincides with the restricted fibration $f'|_S$ over $B$. Thus we can write $N|_S \equiv tC$, where $C$ is a general fiber of $f'|_S$ and $t \ge h^0(Y, N) - 2 = h^0(X, K_{X/B}) -2$. Since $N|_{F'}$ is just the movable part of $|K_{F'}|$ (which is the same as the movable part of $|K_F|$), we deduce that $C$ is actually a smooth curve of genus $2$. By the adjunction formula, we have $(K_Y|_S \cdot C) = ((K_Y + S)|_S \cdot C) = (K_S\cdot C) = 2$ and
		\begin{equation} \label{(1,2)b=0a}
		((\beta^*K_X)|_S \cdot C) = ((\beta^*K_{X/B})|_S\cdot C) =((\sigma^*K_F) \cdot S|_{F'}) = 1.
		\end{equation}
		The above two equalities imply that
		$$
		(E|_S \cdot C) = (E'|_S \cdot C) = 1.
		$$
		Using the fact that $E|_S\le 2E'|_S$ by Lemma \ref{lem:resolution}, we conclude that the horizontal part of $E|_S$ and that of $E'|_S$ have to be the same, which we will denote by $\Gamma$. In particular, $\Gamma$ is a section of $f'|_S$, i.e., $g(\Gamma) = 0$. Write
		$$
		E|_S=\Gamma+E_V, \quad E'|_S=\Gamma+E'_V,
		$$
		where $E_V$ and $E'_V$ are vertical divisors on $S$ with respect to $f'|_S$. Then $E_V \le 2E'_V$. Thus it follows by the adjunction formula that
		\begin{eqnarray*}
			-2 & = & ((K_Y|_S+S|_S+\Gamma)\cdot \Gamma) \\
			& = & (((\beta^*K_X)|_S+S|_S + 2 \Gamma + E_V) \cdot \Gamma) \\
			& \le & (((\beta^*K_X)|_S + S|_S + 2\Gamma + 2E'_V + 2(\beta^*Z)|_S) \cdot \Gamma) \\
			& \le & ((2(\beta^*K_{X/B})|_S + (\beta^*K_X)|_S - S|_S) \cdot \Gamma) \\
			& = & 3((\beta^*K_{X/B})|_S \cdot \Gamma) - t - 2.
		\end{eqnarray*}
		That is,
		\begin{equation} \label{(1,2)b=0b}
		((\beta^*K_{X/B})|_S \cdot \Gamma) \ge \frac{1}{3} t.
		\end{equation}
		As a result, we deduce from \eqref{(1,2)b=0a} and \eqref{(1,2)b=0b} that
		\begin{eqnarray*}
			K_{X/B}^3 & \ge & ((\beta^*K_{X/B})|_S)^2 \\
			& \ge & ((\beta^*K_{X/B})|_S \cdot S|_S) + ((\beta^*K_{X/B})|_S \cdot E'|_S) \\
			& \ge & \frac{4}{3}t.
		\end{eqnarray*}
		Recall that we have known that $t \ge h^0(X, K_{X/B}) - 2 = a + b = \deg f_* \omega_{X/B}$. Thus the proof is completed.
	\end{proof}
	
	\begin{remark}
		Some ideas in the above proof are inspired by the work of Chen. In \cite[Theorem 4.7]{Chen}, he used them to study the canonical linear system of $3$-folds and deduced a Noether inequality. We realized that they also apply to the current setting.
	\end{remark}
	
	\begin{remark}
		In fact, Proposition \ref{prop:(1,2) surf b0} holds when $X$ has Gorenstein singularities, and the corresponding proof is exactly the same, just replacing the use of Lemma \ref{lem:resolution} by \cite[Theorem 2.5]{Chen_Chen} for the Gorenstein case.
	\end{remark}
	
	Our next goal is to study the case when $g(B) = 1$.
	
	\begin{prop}\label{prop:(1,2) surf b1}
		Let $f: X \to B$ and $F$ be as in \S \ref{setting}. Suppose that $X$ is Gorenstein, $F$ is a $(1, 2)$-surface, and $g(B) = 1$. Then
		$$
		K_{X/B}^3 \ge \frac{4}{3} \deg f_{*}\omega_{X/B}.
		$$
	\end{prop}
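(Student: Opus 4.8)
The plan is to reduce the statement to a \emph{slope-like} inequality over a field of characteristic $p>0$ and then kill the error by Frobenius base change. Using the integral model of \S\ref{interal model}, it suffices to prove, for the reduction mod $p$ of $f$ (for infinitely many $p$) and for an arbitrarily small ample $\QQ$-divisor $A$ on $B$, the inequality
\[
(K_{X/B}+f^*A)^3 \ge \frac{4}{3}\deg f_*\omega_{X/B},
\]
after which letting $\deg A\to 0$ yields the claim. Since $\omega_X$ is invertible ($X$ being Gorenstein) and $g(B)=1$ forces $K_B\sim 0$, we have $\omega_{X/B}=\omega_X$ and $K_{X/B}=K_X$. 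I would then pass to the $e$-th Frobenius base change $f_e:X_e\to B$ along $F^e:B\to B$, a purely inseparable cover of degree $p^e$; by flat base change for the relative dualizing sheaf of the flat Cohen--Macaulay family $f$, one has $f_{e*}\omega_{X_e/B}=F^{e*}f_*\omega_{X/B}$, so $\deg f_{e*}\omega_{X_e/B}=p^e\deg f_*\omega_{X/B}$ and $K_{X_e/B}^3=p^eK_{X/B}^3$, while $f_e^*A$ contributes only the bounded term $3\deg A$ to $(K_{X_e/B}+f_e^*A)^3$.

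The point of this base change --- and the essential difference from the genus-zero case of Proposition \ref{prop:(1,2) surf b0} --- is the size of $h^0(X_e,K_{X_e/B})$. On $\PP^1$ the semipositivity of $f_*\omega_{X/B}$ already forced $h^0\ge\deg+2\ge3$, so that $|K_{X/B}|$ mapped to a surface; over an elliptic base $h^0(X,K_{X/B})$ may be far too small to even start the argument. However, $f_*\omega_{X/B}$ is a nef bundle of positive degree on the elliptic curve $B$, so by Serre duality $h^1(B,f_*\omega_{X/B})=h^0(B,(f_*\omega_{X/B})^\vee)$ is bounded by its rank; hence $h^0(X_e,K_{X_e/B})=h^0(B,f_{e*}\omega_{X_e/B})\ge\deg f_{e*}\omega_{X_e/B}=p^e\deg f_*\omega_{X/B}\to\infty$. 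Thus for $e\gg0$ the system $|\rounddown{K_{X_e/B}+f_e^*A}|$ defines a map with two-dimensional image, and the geometry exploited in Proposition \ref{prop:(1,2) surf b0} becomes available on $X_e$.

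With this in hand I would rerun the distinguished-section argument on a resolution $\beta\colon Y_e\to X_e$ produced by Lemma \ref{lem:resolution}, which remains valid in characteristic $p$ by Remark \ref{remarkcharp}. Splitting the movable and fixed parts, taking a general surface $S_e$ in the movable part, and isolating the horizontal curve $\Gamma_{0,e}$ coming from the base point of $|K_F|$, one performs adjunction on $S_e$ exactly as in \eqref{(1,2)b=0a}--\eqref{(1,2)b=0b}. The only change is that $\Gamma_{0,e}$ is a section over the elliptic base, so the value $2g(\Gamma_{0,e})-2=0$ replaces the value $-2$ used for $g(B)=0$; this is in fact favorable and still yields $((\beta^*K_{X_e/B})|_{S_e}\cdot\Gamma_{0,e})\ge\tfrac13 t_e-O(1)$ and hence $K_{X_e/B}^3\ge\tfrac43 t_e-O(1)$, where $t_e\ge h^0(X_e,K_{X_e/B})-2\ge p^e\deg f_*\omega_{X/B}-2$.

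The main obstacle is to guarantee that every correction term in the last inequality stays $O(1)$, \emph{independently of} $e$. The purely inseparable base change may destroy normality of $X_e$, so one must compare $\omega_{X_e/B}$ with the dualizing sheaf of its normalization and of $Y_e$, bounding the conductor and the discrepancies that appear, and one must verify that $\Gamma_0$ pulls back to a genuine genus-one section $\Gamma_{0,e}$ under each Frobenius twist (this is precisely why the genus of $B$ enters only through the bounded quantity $2g(\Gamma_{0,e})-2=0$). Granting that these errors do not grow with $e$, dividing $K_{X_e/B}^3\ge\tfrac43 t_e-O(1)$ by $p^e$ and letting $e\to\infty$ gives $K_{X/B}^3\ge\tfrac43\deg f_*\omega_{X/B}$ after the harmless contribution of $f_e^*A$ and the final limit $\deg A\to0$.
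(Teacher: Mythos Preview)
Your route is genuinely different from the paper's. Since $g(B)=1$ gives $K_{X/B}=K_X$ and $\omega_{X/B}=\omega_X$, the paper observes that (assuming $\deg f_*\omega_{X/B}>0$) $X$ is minimal of general type and irregular, and then simply quotes the inequality $K_X^3\ge\tfrac{4}{3}\chi(X,\omega_X)$ from \cite[Theorem 1.5]{Zhang}. It remains only to identify $\chi(X,\omega_X)$ with $\deg f_*\omega_{X/B}$: since $h^1(F,\CO_F)=0$ for a $(1,2)$-surface one has $R^1f_*\omega_X=0$, and the Leray spectral sequence together with Riemann--Roch on the elliptic curve $B$ gives $\deg f_*\omega_X=\chi(X,\omega_X)$. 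That is the entire proof.

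Your proposal instead transplants to $g(B)=1$ the characteristic-$p$ Frobenius-limiting machinery that the paper develops \emph{later}, for $g(B)\ge 2$ (Proposition~\ref{prop:(1,2) surf b2} and \S\ref{modpproof}). The outline is sound and the adjunction bookkeeping with $2g(\Gamma)-2=0$ is correct, but two points deserve care. First, you assert that for $e\gg0$ the system $|\rounddown{K_{X_e/B}+f_e^*A}|$ has two-dimensional image merely from $h^0\to\infty$; this is not automatic, and in the paper's $g(B)\ge2$ argument it is supplied by Lemma~\ref{lem:(1,2) surface not pencil}, whose proof uses $b\ge2$. For $b=1$ one should instead invoke the Koll\'ar splitting $f_*\omega_{X/B}=Q\oplus P$ carried through the reduction: any rank-one quotient of the ample pullback $F^{e*}Q$ has positive degree, which rules out the pencil case (and the twist by $A$ makes this even easier). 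Second, the ``$O(1)$ independent of $e$'' control of conductors and discrepancies that you flag is exactly what the paper establishes in \S\ref{modpproof} via Claim~I (the section $\Gamma_0$ meets every fiber at a smooth point, so the Frobenius and the normalization/resolution leave it untouched). With those two points secured, your argument goes through; its virtue is self-containment, while the paper's virtue is brevity by reduction to an already-proved Noether--Severi type inequality.
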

	
	\begin{proof}
		In this case, $K_{X/B}$ and $K_X$ coincide. To prove this result, we may assume that $\deg f_*\omega_{X/B} = \deg f_*\omega_X > 0$. This implies that $X$ is a minimal $3$-fold of general type (see \cite[Proposition 4.6]{Viehweg} for instance). Thus by \cite[Theorem 1.5]{Zhang}, we have
		$$
		K_X^3 \ge \frac{4}{3} \chi(X, \omega_{X}).
		$$
		
		On the other hand, since $R^1 f_*\omega_X = 0$, from the Leray spectral sequence, we deduce that
		\begin{eqnarray*}
			h^1(X, \omega_X) & = & h^1(B, f_* \omega_X); \\
			h^2(X, \omega_X) & = & h^0(B, R^2 f_* \omega_X) = h^0(B, \omega_B) = 1.
		\end{eqnarray*}
		By the Riemann-Roch theorem, the above two equalities imply that
		$$
		\deg f_*\omega_X = h^0(B, f_*\omega_X) - h^1(B, f_*\omega_X) = \chi(X, \omega_X).
		$$
		Thus the proof is completed.
	\end{proof}
	
	\subsection{The case when $g(B) \ge 2$}
	The main result in this subsection is the following.
	\begin{prop}\label{prop:(1,2) surf b2}
		Let $f: X \to B$ and $F$ be as in \S \ref{setting}. Suppose that $X$ is smooth, $F$ is a $(1, 2)$-surface, and $b=g(B) \ge 2$. Then
		$$
		K_{X/B}^3 \ge \frac{4}{3} \deg f_{*}\omega_{X/B}.
		$$
	\end{prop}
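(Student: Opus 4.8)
The plan is to run the characteristic $p>0$ argument sketched after \eqref{idea} and to reduce the case $b=g(B)\ge 2$ to an estimate of exactly the same shape as the one carried out for $g(B)=0$ in Proposition \ref{prop:(1,2) surf b0}. Using the integral model of \S\ref{interal model}, it suffices to fix an ample effective $\QQ$-divisor $A$ on $B$ of arbitrarily small degree and to prove
$$
(K_{X/B}+f^*A)^3 \ge \tfrac{4}{3}\deg f_*\omega_{X/B}
$$
for the reduction $f\colon X\to B$ over a field $k$ of characteristic $p>0$ specialized from characteristic zero; letting $\deg A\to 0$ then recovers \eqref{slope1}. The whole point of passing to characteristic $p$ is that we may base change along the $e$-th power $F_B^e\colon B\to B$ of the absolute Frobenius, which is a finite \emph{flat} morphism of degree $p^e$, and thereby manufacture the extra sections that the genus deficit destroys in characteristic zero.

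First I would set up the Frobenius base change $f_e\colon X_e\to B$ of $f$ along $F_B^e$, with projection $\psi_e\colon X_e\to X$. Since $\omega_{X/B}$ is compatible with base change for the flat Cohen--Macaulay morphism $f$, one has $\omega_{X_e/B}=\psi_e^*\omega_{X/B}$, and flat base change (applicable because $F_B^e$ is flat) gives $f_{e*}\omega_{X_e/B}=F_B^{e*}(f_*\omega_{X/B})$, so that, writing $d=\deg f_*\omega_{X/B}$, we get $\deg f_{e*}\omega_{X_e/B}=p^e d$; moreover on a suitable resolution $K_{X_e/B}^3=p^e K_{X/B}^3$ up to conductor corrections. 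The crucial gain is positivity: since $f_*\omega_{X/B}$ is a semipositive rank-two bundle, Riemann--Roch gives $h^0\bigl(X_e,\rounddown{K_{X_e/B}+f_e^*A}\bigr)\ge \chi = p^e d+2(1-b)+O(\deg A)$, which, in sharp contrast with the characteristic-zero picture where the deficit $2(1-b)<0$ may swallow all sections, \emph{exceeds $2$ and grows like $p^e d$} once $e\gg 0$. Thus for $e$ large the system $\bigl|\rounddown{K_{X_e/B}+f_e^*A}\bigr|$ maps $X_e$ onto a surface, putting us exactly in the situation of the $g(B)=0$ argument.

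Next I would transplant the intersection-theoretic computation of Proposition \ref{prop:(1,2) surf b0} to (a normalization and resolution of) $X_e$. Applying Lemma \ref{lem:resolution} — valid in characteristic $p$ by Remark \ref{remarkcharp} — to the movable part of $\bigl|\rounddown{K_{X_e/B}+f_e^*A}\bigr|$, I take a general smooth member $S$ of the base-point-free model $|N|$ and restrict. On $S$ the induced fibration over $B$ has general fibre a genus-$2$ curve $C$ (the movable part of $|K_F|$ on the $(1,2)$-surface $F$), while the base point of $|K_F|$ propagates to the \emph{distinguished section} $\Gamma_0$, whose proper transform $\Gamma$ is the common horizontal part of $E|_S$ and $E'|_S$. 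The relation $E\le 2E'$ together with the adjunction computation along $\Gamma$ then yields $\bigl((\beta^*K_{X_e/B})|_S\cdot\Gamma\bigr)\ge \tfrac13 t$ and $\bigl((\beta^*K_{X_e/B})|_S\cdot C\bigr)=1$ with $t\ge h^0-2$, exactly as in \eqref{(1,2)b=0a}--\eqref{(1,2)b=0b}, producing a \emph{slope-like} inequality $(K_{X_e/B}+f_e^*A)^3\ge \tfrac43 t-(\mathrm{error}_e)$. Dividing by $p^e$ and letting $e\to\infty$ kills the errors and gives $(K_{X/B}+f^*A)^3\ge\tfrac43 d$, and finally $\deg A\to 0$ finishes the proof.

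The hard part will be the bookkeeping hidden in $\mathrm{error}_e$ and in the correction terms for $K_{X_e/B}^3$. Because $F_B^e$ is purely inseparable, $X_e$ is in general non-normal and $\omega_{X_e/B}$ need not be invertible, so one must pass to the normalization and a resolution; the resulting discrepancies and conductor contributions are supported along the transform of $\Gamma_0$ and along the vertical loci, and their multiplicities vary with $e$. The argument succeeds only if these corrections — as well as the deficit $2(1-b)$ and the $\deg A$ terms — grow strictly more slowly than $p^e$, so that they vanish after dividing by $p^e$. Tracking $\Gamma_0$ and the canonical divisor precisely through the successive normalizations and blow-ups, and in particular verifying that the horizontal correction remains a single section whose contribution is controlled uniformly in $e$, is exactly the delicate point flagged after \eqref{idea}, and is where essentially all of the work lies.
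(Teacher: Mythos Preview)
Your overall plan is exactly the paper's: spread out to an integral model, reduce mod $p$, take Frobenius base change $X_e\to B$, run the $(1,2)$-surface computation of Proposition~\ref{prop:(1,2) surf b0} on a resolution of $X_e$ using Lemma~\ref{lem:resolution}, and kill the error by letting $e\to\infty$ and $\deg A\to 0$. So the strategy is right.

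There are, however, two genuine technical gaps. First, the assertion that ``for $e$ large the system $\bigl|\rounddown{K_{X_e/B}+f_e^*A}\bigr|$ maps $X_e$ onto a surface'' does not follow from $h^0$ being large; a priori the system could still be composed with a pencil. The paper rules this out by a separate argument (Lemma~\ref{lem:(1,2) surface not pencil}), which uses the semi-positivity of $f_{e*}\omega_{X_e/B}$ in an essential way and does not come for free. Second, and more seriously, your hope that the horizontal correction ``remains a single section whose contribution is controlled uniformly in $e$'' is precisely what has to be \emph{proved}, and the mechanism is not a growth estimate but a geometric fact you have not identified: the base point of $|K_F|$ propagates to a section $\Gamma_0$ that meets \emph{every} fiber of $f$ (including singular ones) at a \emph{smooth} point of that fiber. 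This is Claim~I in the paper, and it is what guarantees that $\Gamma_0$ misses the entire non-normal and singular locus of $X_e$, so that after normalization and resolution one has $\Gamma_e=\varepsilon^*\Gamma_0$ and $\bigl((K_{X''_e/B}-\varepsilon^*K_{X_0/B})\cdot\Gamma_e\bigr)=0$ \emph{exactly}, not merely $o(p^e)$. Without this, the conductor and discrepancy corrections along $\Gamma_e$ could perfectly well grow like $p^e$ and the limiting argument would collapse. A companion subtlety you also pass over is that in characteristic $p$ Bertini only gives $S$ integral, not smooth, so the adjunction computation along $\Gamma\subset S$ requires first checking that $\Gamma$ is Cartier on $S$ (Claim~II in the paper). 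Both claims are short but indispensable; once you have them, the rest of your outline goes through essentially as written.
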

	Our proof here utilizes the mod $p$ reduction argument in a very essential way. The rest of this section will be devoted to the proof of Proposition \ref{prop:(1,2) surf b2}.
	
	\subsubsection{Some easy cases} Here we prove Proposition \ref{prop:(1,2) surf b2} for some cases when the characteristic zero argument applies.
	
	Similarly to the proof of Proposition \ref{prop:(1,2) surf b1}, we know that $X$ is minimal of general type, and we have
	$$
	h^1(X, \omega_X) = h^1(B, f_*\omega_X), \quad h^2(X, \omega_X) = b.
	$$
	Since $f_* \omega_{X/B}$ is of rank $2$, we have
	\begin{eqnarray*}
		\deg f_* \omega_{X/B} & = & \deg f_* \omega_X - 4(b-1) \\
		& = & h^0(B, f_*\omega_X) - h^1(B, f_*\omega_X) - 2(b-1) \\
		& = & \chi(X, \omega_X) - 3(b-1).
	\end{eqnarray*}
	Also recall that in this case,
	$$
	K_{X/B}^3 = K_X^3 - 6(b-1).
	$$
	
	\begin{lemma}
		Proposition \ref{prop:(1,2) surf b2} holds when $h^0(X, K_X) \le 2$.
	\end{lemma}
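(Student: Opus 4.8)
The plan is to prove Proposition~\ref{prop:(1,2) surf b2} in the special case $h^0(X, K_X) \le 2$ directly, without passing to characteristic $p$. First I would record the numerical translation already established just above the lemma: we have
\begin{equation} \label{plan:numtrans}
\deg f_* \omega_{X/B} = \chi(X, \omega_X) - 3(b-1), \qquad K_{X/B}^3 = K_X^3 - 6(b-1),
\end{equation}
so that the desired inequality $K_{X/B}^3 \ge \frac{4}{3}\deg f_*\omega_{X/B}$ is equivalent to
\begin{equation} \label{plan:target}
K_X^3 - 6(b-1) \ge \tfrac{4}{3}\bigl(\chi(X,\omega_X) - 3(b-1)\bigr),
\end{equation}
i.e. $K_X^3 \ge \frac{4}{3}\chi(X,\omega_X) - 2(b-1)$. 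Since $X$ is minimal of general type, \cite[Theorem 1.5]{Zhang} already gives $K_X^3 \ge \frac{4}{3}\chi(X,\omega_X)$, which is \emph{stronger} than \eqref{plan:target} whenever $b \ge 1$. So in fact the whole difficulty must be hidden in a sign: the point of the hypothesis $h^0(X,K_X)\le 2$ must be that it lets us control $\chi(X,\omega_X)$ in a way that upgrades the bound, or conversely that without it the global inequality is not directly applicable in the relative form we want. I would therefore re-examine whether the intended target is the relative inequality itself rather than \eqref{plan:target}.

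\textbf{The honest route via $p_g$.} The more likely mechanism is the following. Under $h^0(X,K_X) = p_g(X) \le 2$, the canonical map of $X$ carries very little information, and one expects $\chi(X,\omega_X)$ to be small; combined with $h^2(X,\omega_X) = b$ and $h^1(X,\omega_X) = h^1(B, f_*\omega_X)$ one gets
\[
\chi(X,\omega_X) = h^0(X,\omega_X) - h^1(X,\omega_X) + h^2(X,\omega_X) - h^3(X,\omega_X).
\]
By Serre duality $h^3(X,\omega_X) = h^0(X,\mathcal{O}_X) = 1$ and $h^0(X,\omega_X) = p_g(X) \le 2$. The plan is to bound $\deg f_*\omega_{X/B}$ from above in terms of these Hodge numbers and then verify \eqref{plan:target} by a short numerical check, using positivity of $K_X^3$ for a minimal $3$-fold of general type together with \cite[Theorem 1.5]{Zhang}. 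Concretely I would: (1) expand $\chi(X,\omega_X)$ using the four Hodge numbers above; (2) substitute into \eqref{plan:numtrans} to express $\deg f_*\omega_{X/B}$ purely in terms of $p_g(X)$, $h^1(B,f_*\omega_X)$, and $b$; (3) observe that when $p_g(X)\le 2$ the quantity $\deg f_*\omega_{X/B}$ is bounded by a small constant, so the inequality $K_{X/B}^3 \ge \frac{4}{3}\deg f_*\omega_{X/B}$ reduces to showing $K_{X/B}^3$ dominates an explicitly small number, which follows from nefness of $K_{X/B}$ and the fact that $K_{X/B}^3 = 4e$-type positivity already gives a definite lower bound.

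\textbf{Main obstacle.} The hard part will be step (2)--(3): controlling $h^1(B, f_*\omega_X) = h^1(X,\omega_X)$. Because $f_*\omega_{X/B}$ is a rank-$2$ semipositive bundle on $B$ with $b=g(B)\ge 2$, its $h^1$ can be large, and a crude bound will not suffice. I would handle this by invoking semipositivity (Fujita / Viehweg), which forces the Harder--Narasimhan slopes of $f_*\omega_{X/B}$ to be nonnegative and hence controls $h^0$ versus $\deg$; in particular, for a rank-$2$ nef bundle on a genus-$b$ curve one has $h^0(B, f_*\omega_{X/B}) \le \deg f_*\omega_{X/B} + 2$, which pins down the relation between $p_g(X)$, $\deg f_*\omega_{X/B}$, and $b$. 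The delicate point is that when $\deg f_*\omega_{X/B}$ is very small the bundle may be strictly semistable or even trivial on a subsheaf, so I would split into the cases according to whether $f_*\omega_{X/B}$ has a degree-zero quotient or sub-line-bundle, treating each by the semipositivity estimate. Once $\deg f_*\omega_{X/B}$ is bounded above by the constant forced by $p_g(X)\le 2$, the conclusion is immediate from $K_{X/B}^3 \ge 0$ together with the already-cited global inequality, and the proof is complete.
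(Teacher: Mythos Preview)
Your plan contains a sign error that derails the opening reduction. Rewriting $K_{X/B}^3 \ge \tfrac{4}{3}\deg f_*\omega_{X/B}$ via the identities gives
\[
K_X^3 - 6(b-1) \;\ge\; \tfrac{4}{3}\chi(X,\omega_X) - 4(b-1),
\]
i.e.\ $K_X^3 \ge \tfrac{4}{3}\chi(X,\omega_X) + 2(b-1)$, not $-2(b-1)$. So the global inequality $K_X^3 \ge \tfrac{4}{3}\chi(X,\omega_X)$ from \cite{Zhang} is \emph{weaker} than what is needed when $b\ge 2$, and your first paragraph does not close the argument.

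More importantly, the ``main obstacle'' you describe is not an obstacle at all, and your proposed Harder--Narasimhan case analysis is unnecessary. From the identities you already wrote down,
\[
\deg f_*\omega_{X/B} \;=\; \chi(X,\omega_X) - 3(b-1) \;=\; p_g(X) - h^1(X,\omega_X) - 2(b-1).
\]
A large $h^1(X,\omega_X)$ therefore makes $\deg f_*\omega_{X/B}$ \emph{smaller}, which only helps. Using just $h^1 \ge 0$ you get $\deg f_*\omega_{X/B} \le p_g(X) - 2(b-1) \le 2 - 2(b-1) \le 0$ for $b\ge 2$; semipositivity of $f_*\omega_{X/B}$ then forces $\deg f_*\omega_{X/B}=0$, and the inequality $K_{X/B}^3 \ge 0$ finishes the proof. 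So your ``honest route'' was the right instinct, but it collapses to two lines once you see which direction $h^1$ pushes.

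For comparison, the paper takes a genuinely different path: it invokes results of Chen--Hacon to establish the absolute inequality $K_X^3 \ge 2\chi(X,\omega_X)$ (directly for $p_g=0$, and via $\chi \le \tfrac{3}{2}p_g \le 3$ together with $K_X^3 \ge 6(b-1) \ge 6$ for $1\le p_g\le 2$), which under the numerical translation is exactly $K_{X/B}^3 \ge 2\deg f_*\omega_{X/B}$. That route yields a nominally stronger constant but relies on external input; your route, once cleaned up, is more elementary and shows that under the hypotheses the relative degree is actually zero.
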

	
	\begin{proof}
		If $h^0(X, K_X) = 0$, we have $K_X^3\ge 2\chi(X, \omega_X)$ by \cite[3.2 Case 2, p761]{Chen_Hacon}. Thus we deduce that
		$$
		K_{X/B}^3 \ge 2 \deg f_{*}\omega_{X/B}
		$$
		from the above calculation.
		
		If $1 \le h^0(X, K_X) \le 2$, then $\chi(X, \omega_X)\le \frac{3}{2}h^0(X, K_X)$ by \cite[Proposition 2.1]{Chen_Hacon}. In particular, $\chi(X, \omega_X)\le 3$. On the other hand, $K_{X/B}^3\ge 0$ implies that $K_X^3 \ge 6$. Therefore, $K_X^3\ge2\chi(\omega_X)$. Thus the result follows similarly to the above.
	\end{proof}

	The general case that we cannot handle so far using characteristic zero arguments is when $h^0(X, K_X) \ge 3$. However, we discover that a characteristic $p>0$ method is applicable here quite well.
	
	\subsubsection{Positive characteristic preparations} \label{charppreparation}
	Let $k$ be a field of characteristic $p > 0$. We have the following result which is a positive characteristic counterpart of a result of Chen \cite[Lemma 4.5]{Chen}. One important difference from \cite[Lemma 4.5]{Chen} is that we have to consider Gorenstein $3$-folds rather than merely smooth ones, because in the future, we need to apply this to the Frobenius base change of the fibered $3$-folds. These $3$-folds are by no means smooth.
	
	\begin{lemma}\label{lem:(1,2) surface not pencil}
		Let $f:\Sigma\to B$ be a fibration defined over an algebraically closed field from a Gorenstein integral $3$-fold $\Sigma$ to a nonsingular curve $B$ of genus $b \ge 2$ with general fiber $F$ a $(1,2)$-surface. Suppose that $f_{*}\omega_{\Sigma/B}$ is semi-positive and that $h^0(\Sigma, K_{\Sigma})\ge 3$. Then $|K_{\Sigma}|$ is not composed with a pencil.
	\end{lemma}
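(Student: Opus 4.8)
The plan is to argue by contradiction: assume $|K_\Sigma|$ is composed with a pencil and extract a contradiction from the three hypotheses $b \ge 2$, the semi-positivity of $\mathcal{E} := f_*\omega_{\Sigma/B}$, and $h^0(\Sigma, K_\Sigma) \ge 3$. After a birational modification as in Lemma~\ref{lem:resolution} (available in positive characteristic by Remark~\ref{remarkcharp}), I may assume the moving part of $|K_\Sigma|$ is base point free and defines a fibration $g \colon \Sigma \dashrightarrow C$ onto a smooth curve, and I write $K_\Sigma = M + Z$ with $M$ moving and $Z$ fixed. Since the sections of $M$ are pulled back from $C$, writing $M = g^*\mathfrak{d}$ one has $h^0(C, \mathfrak{d}) = h^0(\Sigma, M) = h^0(\Sigma, K_\Sigma) \ge 3$, so $\deg \mathfrak{d} \ge 2$ and a general member of $|M|$ is a sum $G_1 + \cdots + G_m$ of $m = \deg\mathfrak{d} \ge 2$ distinct fibres of $g$. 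The whole point will be that $m \ge 2$ is incompatible with the geometry of a general fibre $F$ of $f$.

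The technical heart is a restriction statement: for a general fibre $F = f^{-1}(P)$ the map $H^0(\Sigma, \omega_\Sigma) \to H^0(F, \omega_F)$ is surjective. Since $\Sigma$ is Gorenstein, $\omega_\Sigma = \omega_{\Sigma/B} \otimes f^*\omega_B$, so $H^0(\Sigma, \omega_\Sigma) = H^0(B, \mathcal{E}\otimes\omega_B)$ and this restriction is the evaluation of $\mathcal{E}\otimes\omega_B$ at $P$. I would deduce surjectivity from the generic global generation of $\mathcal{E}\otimes\omega_B$: by Serre duality this is the equality $h^0(\mathcal{E}^\vee) = h^0(\mathcal{E}^\vee(P))$ for general $P$, and since $\mathcal{E}$ is nef of rank $2$ with $\deg\omega_B = 2b-2 > 0$, any nonzero homomorphism $\mathcal{E}\to\mathcal{O}_B(P)$ not factoring through a trivial quotient would produce an effective quotient line bundle of degree at most $1$ lying in a bounded family, which a general point $P$ of the genus-$b$ curve avoids. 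This step uses only Serre duality and the numerical consequences of nefness, so it is insensitive to the characteristic; it substitutes for the Koll\'ar-type vanishing employed in the case $g(B) = 0$ (Proposition~\ref{prop:(1,2) surf b0}).

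With surjectivity in hand the dichotomy is clean. If $g(F)$ is a point for a general $F$, then $g$ contracts the general fibres of $f$ and hence factors through $f$; consequently $M$ is pulled back from $B$, $M|_F \equiv 0$, and every section of $K_\Sigma$ restricts to a scalar multiple of the single section cutting out $Z|_F$. The image of the restriction would then be at most one-dimensional, contradicting its surjectivity onto the two-dimensional space $H^0(F, \omega_F)$. Otherwise $g(F) = C$ for general $F$, so $g|_F$ is a nonconstant pencil on $F$, and surjectivity forces the restricted moving system to be the complete canonical system $|M|_F| = |K_F|$. A general member of $|K_F|$ would then be the restriction $(G_1 + \cdots + G_m)|_F = \sum_{j=1}^m (G_j \cap F)$, a sum of $m \ge 2$ nonzero effective divisors, hence reducible. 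This contradicts the fact that the canonical pencil of a $(1,2)$-surface is a genus-$2$ pencil whose general member is irreducible. Both cases being impossible, $|K_\Sigma|$ is not composed with a pencil.

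The main obstacle I anticipate is not the above combinatorics but the control of the positive-characteristic geometry underlying it. I must ensure that the general fibre $F$, a $(1,2)$-surface obtained by reduction from characteristic zero, still satisfies $q(F) = 0$ and that $|K_F|$ is a genus-$2$ pencil with zero-dimensional base locus and irreducible general member; these are exactly the delicate properties of $(1,2)$-surfaces specialized from characteristic zero flagged in the introduction, and they also justify the base-change identification $(\mathcal{E})_P \cong H^0(F, \omega_F)$ and the identity $H^0(\Sigma,\omega_\Sigma) = H^0(B, \mathcal{E}\otimes\omega_B)$ for general $P$. The second delicate point is the positivity bookkeeping in the restriction step, namely excluding the special $\mathcal{E}$ for which low-degree quotient line bundles could sweep out the whole curve; here the semi-positivity hypothesis together with $b \ge 2$ must be used carefully. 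Once these geometric inputs are secured, the Serre-duality computation, the dichotomy, and the irreducibility contradiction are uniform in the characteristic.
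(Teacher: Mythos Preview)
Your overall strategy is sound and is essentially dual to the paper's: the paper first rules out the case where the pencil induced by $|K_\Sigma|$ restricts nontrivially to a general fibre $F$ (your case $g(F)=C$) using the single equality $K_F^2=1$, and only then analyzes the remaining case where the pencil coincides with $f$ via a sub/quotient line bundle argument on $f_*\omega_\Sigma$. You reverse the order, first establishing surjectivity of $H^0(\Sigma,K_\Sigma)\to H^0(F,K_F)$ (which is exactly the statement that the pencil does \emph{not} coincide with $f$), and then ruling out the case $g(F)=C$ by reducibility.

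There is, however, a real gap in your justification of surjectivity. Your ``bounded family'' sketch does not control the situation: nonzero maps $\mathcal E\to\mathcal O_B(P)$ may have image a \emph{non-effective} degree-$0$ quotient $\mathcal L\cong\mathcal O_B(P-Q)$ with $\mathcal L\neq\mathcal O_B$, and you have not excluded this; more seriously, even bounding the quotients in $\operatorname{Pic}(B)$ does not prevent the bad points $P$ from sweeping out $B$. The correct (and characteristic-free) way to prove generic surjectivity under only the hypotheses $b\ge 2$, semi-positivity, and $h^0(K_\Sigma)\ge 3$ is to observe that failure means all global sections of $f_*\omega_\Sigma=\mathcal E\otimes\omega_B$ lie in a saturated sub-line-bundle $\mathcal L_0$, and then to bound $\deg(\mathcal E\otimes\omega_B/\mathcal L_0)$ via Clifford's inequality and Riemann--Roch to contradict nefness of $\mathcal E$. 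But this is precisely the paper's argument for the ``pencil $=f$'' case --- so your surjectivity step, once made rigorous, coincides with the paper's main step rather than bypassing it.

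Your treatment of the case $g(F)=C$ is genuinely different from the paper's. The paper disposes of it in one line: if $F'$ is a general fibre of $g$ restricted to $F$ then $(K_F\cdot F')\ge 1$, so $1=K_F^2\ge a(K_F\cdot F')\ge a\ge 2$, a contradiction. Your argument instead invokes the irreducibility of a general member of $|K_F|$, which you correctly flag as a delicate positive-characteristic input requiring the ``specialized from characteristic zero'' hypothesis. The paper's intersection-theoretic argument is both shorter and avoids this hypothesis, so it proves the lemma in the generality stated.
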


	\begin{proof}
		We prove by showing a contradiction. Suppose $|K_{\Sigma}|$ is composed with a pencil. Take a birational modification $\pi:\tilde{\Sigma}\to \Sigma$ such that:
		\begin{enumerate}
			\item[(1)] $\tilde{\Sigma}$ is nonsingular;
			\item[(2)] the movable part $|M|$ of $\pi^*|K_{\Sigma}|$ is base point free. Denote by $g$ the morphism induced by $|M|$ and by $S$ a general member in $|M|$.
		\end{enumerate} 
		
		By our assumption, $g$ is a morphism onto a curve. Let $g=s\circ h$ be the Stein factorization of $g$, where $h:\tilde{\Sigma}\to C$ is a fibration onto a nonsingular curve. Denote by $F'$ a general fiber of $h$. Then we have 
		$$
		S\equiv aF',
		$$ 
		where $a\ge h^0(\Sigma, K_{\Sigma})-1 \ge 2$.
		
		Let $\tilde{f}=f\circ \pi$. Denote by $\tilde{F}$ a general fiber of $\tilde{f}$ and by $\sigma: \tilde{F}\to F$ the natural contraction morphism.
		
		We first prove that $h$ and $\tilde{f}$ give the same fibration. Otherwise, $h|_{\tilde{F}}$ is a fibration onto $C$. Thus we would have $((\sigma^*K_F) \cdot F'|_{\tilde{F}}) \ge 1$, so that
		$$
		1 = (\sigma^*K_F)^2 = ((\sigma^* K_F )\cdot (\pi^*K_{\Sigma})|_{\tilde{F}}) \ge a ((\sigma^*K_F) \cdot F'|_{\tilde{F}}) \ge a \ge 2.
		$$
		This is a contradiction. 
		
		Denote by $\mathcal{L}_0$ the saturated subbundle of $f_{*}\omega_{\Sigma}$ generated by global sections. Since $h$ and $\tilde{f}$ give the same fibration, $\mathcal{L}_0$ is a line bundle. Then we have the following exact sequence:
		$$
		0\to \mathcal{L}_0\to f_{*}\omega_{\Sigma}\to \mathcal{L}_1\to 0,
		$$
		where $\mathcal{L}_1$ is a line bundle since $f_* \omega_\Sigma$ is of rank two. In particular, we have $h^0(B, \mathcal{L}_0)=h^0(\Sigma, K_{\Sigma})\ge 3$. This also implies that $h^0(B, \mathcal{L}_1) \le h^1(B, \mathcal{L}_0)$.
		
		When $h^1(B, \mathcal{L}_0)=h^0(B, \omega_B\otimes\mathcal{L}_0^{\vee})>0$, we have
		\begin{equation} \label{eq:pencil1}
		h^0(B, \omega_B\otimes\mathcal{L}_0^{\vee}))\le b+1-h^0(B, \mathcal{L}_0)\le b-2.
		\end{equation}
		Here the first inequality follows from the fact that 
		$$
		h^0(B, \omega_B\otimes\mathcal{L}_0^{\vee})+h^0(B, \mathcal{L}_0) - 1 \le h^0(B, \omega_B).
		$$ 
		Thus $h^0(B, \mathcal{L}_1) \le h^1(B, \mathcal{L}_0) \le b-2$. By the Riemann-Roch theorem on $B$, we have 
		$$
		\deg \mathcal{L}_1 \le h^0(B, \mathcal{L}_1)+b-1\le 2b - 3.
		$$ 
		As a result, we deduce that $\deg(\mathcal{L}_1\otimes \omega_B^{\vee})\le -1$. This gives rise to a contradiction as needed, because $f_*\omega_{\Sigma/B}$ is semi-positive and each of its quotient bundles should have  non-negative degree.
		
		The proof is now completed.
	\end{proof}

	In the following, let $f: \Sigma \to B$ be a fibration defined over $k$ from a smooth $3$-fold $\Sigma$ to a curve $B$ of genus $b$. Suppose that $\Sigma$ is of general type, and the general fiber $F$ of $f$ is a $(1, 2)$-surface over $k$. We further assume that $F$ is ``specialized from characteristic zero", i.e., it is a general closed fiber of an integral model of a $(1, 2)$-surface in characteristic zero, so that its geometry is the same as that in characteristic zero.
	
	Let $L$ be nef and big $\QQ$-divisor on $\Sigma$ such that
	\begin{itemize}
		\item $L-K_\Sigma$ is a vertical $\QQ$-divisor with respect to $f$;
		\item $\rounddown{L}|_F \sim K_F$;
		\item the map induced by the complete linear system $|\rounddown{L}|$ has the image of dimension two.
	\end{itemize}
	The above assumption guarantees that the map $H^0(\Sigma, \rounddown{L}) \to H^0(F, K_F)$ is surjective.
	
	Write
	$$
	L=M+Z,
	$$
	where $|M|$ is the movable part of $|\rounddown{L}|$ and $Z=L-M$ is an effective $\QQ$-divisor. Similarly to the proof of Proposition \ref{prop:(1,2) surf b0}, the base locus of $|M|$ contains a unique section of $f$. Since $L-K_\Sigma$ is  vertical and $M|_F \sim \rounddown{L}|_F \sim K_F$, we know that $Z$ is vertical with respect to $f$. We denote this distinguished section of $f$ by $\Gamma_0$. For any fiber $F$ of $f$, since $\Gamma_0$ is the section of $f$, $\Gamma_0 \cap F$ consists of exactly one single point. 
	
	\textbf{Claim I}: For any fiber $F$ of $f$, $\Gamma_0 \cap F$ is a smooth point on $F$.
	
	Denote by $P \in F$ this point and by $\mu \colon \Sigma'\rightarrow \Sigma$ the blow-up of $\Sigma$ at $P$. Since $\Sigma$ is smooth, $\Sigma'$ is also smooth. Write
	$$
	\mu^*F = F' + r E,
	$$
	where $E$ is the exceptional divisor of $\mu$, $F'$ is the strict transform of $F$, and $r$ is the multiplicity of $F$ at $P$. Denote by $\Gamma'_0$ the strict transform of $\Gamma_0$. Then $\Gamma'_0$ is isomorphic to $\Gamma_0$. Moreover, $\Gamma'_0$ is a section of the fibration $f \circ \mu$. We have
	$$
	1= ((\mu^*F) \cdot \Gamma'_0) = (F'\cdot \Gamma'_0) + r (E\cdot \Gamma'_0).
	$$
	Since $(E\cdot \Gamma'_0)=1$ and $(F'\cdot \Gamma'_0) \ge 0$, we have $r=1$. Because $\Sigma$ is smooth, the point $P$ must be a \emph{smooth point} on $F$. Thus we finish the proof of the claim. 
	
	By Remark \ref{remarkcharp}, we can take a birational modification $\beta: \Sigma_0 \rightarrow \Sigma$ as in Lemma \ref{lem:resolution}, and write
	$$
	K_{\Sigma_0} = \beta^*K_\Sigma + E, \quad \beta^*M = N + E'
	$$
	so that $|N|$ is base point free and that both $E$ and $E'$ are linear combinations of strict transforms of exceptional divisors $E_i$ and $E\le 2E'$.
	
	Let $f'= f \circ \beta$ and $F'$ be a general fiber of $f'$. Denote by $\sigma: F' \to F$ the natural contraction. Take a general member $S \in |N|$. By Bertini's theorem (see \cite[Thm. I.6.3]{Zariski}), $S$ is irreducible. Since $|N||_{F'}$ is the movable part of $|\sigma^*K_F|$ and $F$ is a (1,2) surface, $S$ is reduced by \cite[Thm. I.6.3]{Zariski}. That is, $S$ is a Gorenstein integral surface, but not necessarily smooth. The restriction $f'|_S: S \to B$ is again a fibration with general fiber $C$ a smooth curve of genus $2$. We also have $N|_S \equiv tC$, where $t \ge h^0(\Sigma_0, N) - 2 = h^0(\Sigma, \rounddown{L}) -2$. Similar as in the proof of Proposition \ref{prop:(1,2) surf b0}, we deduce that
	\begin{itemize}
		\item $((\beta^*K_\Sigma)|_S \cdot C) = ((\beta^*L)|_S \cdot C) =((\sigma^*K_F) \cdot S|_{F'}) = 1$;
		\item $(E|_S \cdot C) = (E'|_S \cdot C) = 1$.
	\end{itemize}
	Thus the horizontal part of $E|_S$ is the same as that of $E'|_S$, which we denote again by $\Gamma$. Here $\Gamma$ is a section of $f'$, and $g(\Gamma) = b \ge 2$. We emphasize that
	$$
	\beta(\Gamma) = \Gamma_0.
	$$
	Therefore, from Claim I, we know that for any fiber $F'$ of $f'$, the point $\Gamma \cap F'$ is a smooth point on $F'$. 
	
	\textbf{Claim II}: $\Gamma$ is a Cartier divisor on $S$. 
	
	We may assume that $\Gamma$ comes from some $E_i$, i.e., $\Gamma \subseteq E_i|_S$. Since $(E_i \cdot C)=1$ and $|N||_{E_i}$ separates fibers of $f'|_{E_i}$, we know that $|N||_{E_i}$ induces a generic finite morphism on $E_i$. By Bertini's theorem \cite[Thm. I.6.3]{Zariski}, we may further take the above $S$ general so that $S\cap E_i$ is irreducible. Then $E_i|_S$ is an irreducible curve on $S$. Since $(E_i|_S\cdot C)=1$, $E_i|_S$ is also reduced. We deduce that $\Gamma=E_i|_S$. In particular, $\Gamma$ is an effective Cartier divisor on $S$.
	
	Let $E_V$ and $E'_V$ be the same as before so that $E_V \le 2E'_V$.
	
	\begin{lemma} \label{lemmacharp}
		Let $l = ((L-K_{\Sigma/B}) \cdot \Gamma_0)$. Then we have
		$$
		L^3 \ge \frac{4}{3} \left( h^0(\Sigma, \rounddown{L}) - 2 \right) + \frac{1}{3}l.
		$$
	\end{lemma}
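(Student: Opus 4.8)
The plan is to adapt the argument of Proposition \ref{prop:(1,2) surf b0} (the $g(B)=0$ case) to the present situation, the two new features being the positive base genus $b\ge 2$ and the fact that $L$ is only a nef and big approximation of $K_{\Sigma/B}$, whose discrepancy along the distinguished section is exactly the error term $l$. First I would bound $L^3$ from below by restricting to the base-point-free system. Since $\beta^*L$ is nef and $\beta^*L-N=E'+\beta^*Z$ is effective, one gets $L^3=(\beta^*L)^3\ge (\beta^*L)^2\cdot N=((\beta^*L)|_S)^2$ for a general $S\in|N|$. Writing $(\beta^*L)|_S=N|_S+E'|_S+(\beta^*Z)|_S$ and discarding the effective tails against the nef class $(\beta^*L)|_S$, together with $N|_S\equiv tC$, $((\beta^*L)|_S\cdot C)=1$ and $E'|_S=\Gamma+E'_V$, this yields
\[
L^3 \ge t + ((\beta^*L)|_S\cdot\Gamma).
\]
By the projection formula and the fact that $\beta$ maps $\Gamma$ birationally (hence with degree one) onto $\Gamma_0$, the last term equals $(L\cdot\Gamma_0)$.

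The heart of the argument is a lower bound for $(L\cdot\Gamma_0)$ coming from adjunction along $\Gamma$ on $S$. Since $S$ is a Gorenstein integral surface and $\Gamma$ is a Cartier divisor on it (Claim II) that maps isomorphically to $B$, hence has arithmetic genus $b$, adjunction gives $2b-2=((K_{\Sigma_0}+S)|_S+\Gamma)\cdot\Gamma$. Substituting $K_{\Sigma_0}=\beta^*K_\Sigma+E$ with $E|_S=\Gamma+E_V$, replacing $E_V$ by the larger vertical effective divisor $2E'_V+2(\beta^*Z)|_S$ (legitimate because $\Gamma$ is horizontal and so not a component of any of these), and using the identity $2\Gamma+2E'_V+2(\beta^*Z)|_S=2(\beta^*L)|_S-2N|_S$ along with $S|_S\equiv N|_S\equiv tC$, one converts $\beta^*K_\Sigma$ into $\beta^*K_{\Sigma/B}$ at the cost of $(f'^*K_B)|_S\equiv(2b-2)C$. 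The crucial point is that this $(2b-2)$ precisely cancels the $2b-2$ on the left, leaving
\[
((\beta^*K_{\Sigma/B})|_S\cdot\Gamma)+2((\beta^*L)|_S\cdot\Gamma)\ge t.
\]
Translating both intersection numbers to $\Gamma_0$ via the projection formula and inserting $l=(L\cdot\Gamma_0)-(K_{\Sigma/B}\cdot\Gamma_0)$ gives $3(L\cdot\Gamma_0)\ge t+l$.

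Combining the two displays and using $t\ge h^0(\Sigma,\rounddown{L})-2$ then yields $L^3\ge t+\tfrac13(t+l)=\tfrac43 t+\tfrac13 l\ge \tfrac43\left(h^0(\Sigma,\rounddown{L})-2\right)+\tfrac13 l$, as desired. I expect the main obstacle to be the bookkeeping in the adjunction step: one must keep careful track of the several effective exceptional and vertical contributions $E_V$, $E'_V$ and $(\beta^*Z)|_S$, verify that each is discarded in the correct direction (using that $\Gamma$ is not among their components), and confirm the cancellation of the $2b-2$ terms, which is exactly what makes the genus of the base invisible in the final estimate. A secondary point to check is that adjunction and the identity $((\beta^*L)|_S\cdot\Gamma)=(L\cdot\Gamma_0)$ remain valid on the possibly singular Gorenstein surface $S$ in positive characteristic; this is guaranteed by Claims I and II, which ensure that $\Gamma$ is a Cartier divisor meeting each fiber in a smooth point.
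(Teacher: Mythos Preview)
Your proposal is correct and follows essentially the same approach as the paper: the paper performs the identical adjunction computation on $\Gamma\subset S$, replaces $E_V$ by $2E'_V+2(\beta^*Z)|_S$, and rewrites to obtain $((\beta^*L)|_S\cdot\Gamma)\ge\tfrac13(t+l)$, then combines with $L^3\ge((\beta^*L)|_S\cdot S|_S)+((\beta^*L)|_S\cdot E'|_S)\ge t+\tfrac13(t+l)$. The only cosmetic difference is that the paper keeps the computation on $S$ throughout (writing $((\beta^*L-\beta^*K_\Sigma)\cdot\Gamma)=l-(2b-2)$ at the end), whereas you pass to $\Gamma_0$ via the projection formula a step earlier and split $K_\Sigma=K_{\Sigma/B}+f^*K_B$ to see the $2b-2$ cancellation; the algebra is identical.
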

	
	\begin{proof}
		Since $\Gamma$ is a Cartier divisor on a Gorenstein integral surface $S$, by the adjunction formula, we have
		\begin{eqnarray*}
			2b-2 & = & ((K_{\Sigma_0}|_S+S|_S+\Gamma)\cdot \Gamma) \\
			& = & (((\beta^*K_\Sigma)|_S+S|_S + 2 \Gamma + E_V) \cdot \Gamma) \\
			& \le & (((\beta^*K_\Sigma)|_S + S|_S + 2\Gamma + 2E'_V + 2(\beta^*Z)|_S) \cdot \Gamma) \\
			& = & ((2(\beta^*L)|_S + (\beta^*K_\Sigma)|_S - S|_S) \cdot \Gamma) \\
			& = & 3((\beta^*L)|_S \cdot \Gamma) - t - ((\beta^*L - \beta^*K_\Sigma) \cdot \Gamma) \\
			& = & 3((\beta^*L)|_S \cdot \Gamma) - l - t + 2b - 2,
		\end{eqnarray*}
		i.e.,
		$$
		((\beta^*L)|_S \cdot \Gamma) \ge \frac{1}{3} (t + l).
		$$
		Therefore, we deduce that
		\begin{eqnarray*}
			L^3 & \ge & ((\beta^*L)|_S)^2 \ge ((\beta^*L)|_S \cdot S|_S) + ((\beta^*L)|_S \cdot E'|_S) \\
			& \ge & \frac{4}{3} t + \frac{1}{3} l = \frac{4}{3} \left( h^0(\Sigma, \rounddown{L}) - 2 \right) + \frac{1}{3}l.
		\end{eqnarray*}
		Thus the proof is completed.
	\end{proof}
	
	\subsubsection{Completing the proof} \label{modpproof} 
	
	Let $f: X \to B$ be as in Proposition \ref{prop:(1,2) surf b2}. Furthermore, we may assume that the canonical linear system $|K_X|$ induces a map to a surface image and that $\deg f_*\omega_{X/B} > 0$. By Koll\'ar's decomposition \cite{Kollar}, we have
	$$
	f_*\omega_{X/B}=Q \oplus P,
	$$
	where $Q$ is an ample vector bundle on $B$ and $P$ is a flat bundle on $B$. The assumption that $\deg f_* \omega_{X/B} >0$ guarantees that $Q \neq 0$. Since $f_* \omega_{X/B}$ is of rank $2$, either $P$ is a line bundle or $P = 0$.

	Fix an ample and effective $\QQ$-divisor $A$ on $B$. We extend $f: X \to B$ to $\CX \to \CB \to \CZ$ as in \S \ref{interal model}. The divisor $A$ also extends to a relatively ample and effective $\QQ$-divisor $\CA$ on $\mathcal{B}$. We may assume that  (i)-(iv) in \S \ref{interal model} hold.  Similarly to \cite[Section 4B]{Patakfalvi}, by shrinking $\CZ$ if necessary, we may further assume that
	\begin{itemize}
		\item[(v)] the ample bundle $Q$ extends to a relatively ample bundle $\mathcal{Q}$ on $\CB$ and the flat bundle $P$ extends to a bundle $\mathcal{P}$ on $\CB$. 
	\end{itemize}
	Moreover, we still have 
	$$
	f_* \omega_{\mathcal{X}/\mathcal{B}} =\mathcal{Q}\oplus\mathcal{P}.
	$$ 
	In particular, for any closed point $z \in \mathcal{Z}$, we have 
	$$
	{f_z}_*\omega_{X_z/B_z}=\mathcal{Q}_z\oplus\mathcal{P}_z,
	$$ 
	where $\mathcal{Q}_z$ is ample, and either $\mathcal{P}_z=0$ or $\mathcal{P}_z$ is a line bundle of degree $0$. Nevertheless, ${f_z}_*\omega_{X_z/B_z}$ is semi-positive.
	
	Now take a closed point $z \in \mathcal{Z}$ whose residue field has characteristic $p > 0$. For simplicity, we denote
	$$
	X_0 := \mathcal{X}_z, \quad B_0 := \mathcal{B}_z, \quad A_0 := \mathcal{A}_z
	$$
	in the following. We still use $f$ to denote the fibration $X_0 \to B_0$ and use $F$ to denote a general fiber of $f$. Notice that the fibration $f: X_0 \to B_0$ is not necessarily relatively minimal, because it is not clear to us whether $K_{X_0/B_0}$ is still nef.\footnote{There are some known examples in which none of the reductions of a nef divisor to positive characteristics is nef. See \cite{Langer}.} 
	
	In the following, let
	$$
	M = K_{X_0/B_0} + f^*A_0.
	$$
	Let $F^e$ be the $e^{\mathrm{th}}$ absolute Frobenius morphism of $B_0$. Consider the following commutative diagram:
	$$
	\xymatrix{
		X''_{e} \ar[r]^{\varepsilon''} \ar[rrd]_{f''_e} \ar@/^2.5pc/[rrr]^{\varepsilon} & X'_{e} \ar[dr]^{f'_e} \ar[r]^{\varepsilon'} & X_e \ar[d]^{f_e} \ar[r]^{F^e_f} & X_0 \ar[d]^f \\
		& & B_0 \ar[r]^{F^e} & B_0      }
	$$
	Here $X_e = X_0 \times_{F^e} B_0$, $\varepsilon'$ is the normalization of $X_e$, $\varepsilon''$ is a resolution of singularities of $X'_{e}$ which is an isomorphism over the smooth locus of $X'_e$, and $\varepsilon$ is the composition of $\varepsilon''$, $\varepsilon'$ and $F^e_f$. Notice that the existence of $\varepsilon''$ as required is known by \cite[Theorem 1.1]{Cutkosky} for instance. The general fibers of $f_e$ and $f''_e$ are both isomorphic to $F$. By abuse of notation, we still denote them by $F$. Let 
	$$
	M_e = F_f^{e*}M, \quad M''_e = \varepsilon^*M.
	$$ 
	Then it is easy to see that $M''_e$ is nef and big. Let 
	$$
	L_e=M_e + 2(b-1)F, \quad L''_e = M''_e  + 2(b-1)F.
	$$ 
	
	Recall that we have assumed that $\deg f_* \omega_{X_0/B_0} = \deg f_* \omega_{X/B} > 0$. When $e$ is sufficiently large, we have 
	$$
	h^0(X_e, K_{X_e}) \ge h^0(B, {f_e}_*\omega_{X_e/B_0}) \ge \deg {f_e}_*\omega_{X_e/B_0} - 2(b-1) \ge 3.
	$$
	Since ${f_e}_* \omega_{X_e/B}= {F^e}^*f_*\omega_{X/B}$ is still semi-positive, by Lemma \ref{lem:(1,2) surface not pencil}, we know that $|K_{X_e}|$ is not composed with a pencil.\footnote{Notice that here $X_e$ is Gorenstein but not smooth in general.} Notice that $\rounddown{L_e}\ge K_{X_e}$. Thus we have $|\rounddown{L''_e}||_F = |K_F|$ and the map induced by the complete linear system $|\rounddown{L''_e}|$ has the image of dimension two. Thus we are under the setting of \S \ref{charppreparation}.
	
	Let $|N|$ be the movable part of $|K_{X_0}|$. As is described in \S \ref{charppreparation}, there is a unique section $\Gamma_0$ of $f$ lying in its base locus. Similarly, the movable part $|N_e|$ of $|\rounddown{L''_e}|$ also contains a unique section $\Gamma_e$ of $f''_e$. Moreover, $\varepsilon(\Gamma_e) = \Gamma_0$, because when restricting on general fibers, they are  identical.
	
	By our assumption, it is easy to see that $L_e - K_{X_e}$ is vertical. Notice that the modification of either $\varepsilon'$ or $\varepsilon''$ \emph{only occurs on the singularities of singular fibers of $f_e$ or $f'_e$}. This particularly implies that $K_{X''_e} - \varepsilon''^*(\varepsilon'^*K_{X_e})$ is vertical. Therefore, we deduce $L''_e - K_{X''_e}$ is vertical.
	
	Recall that in \S \ref{charppreparation}, we have showed that $\Gamma_0$ meets every fiber of $f$ at its smooth point. Thus $F_f^{e*}\Gamma_0$ meets each fiber of $f_e$ at its smooth point. Since neither $\varepsilon'$ nor $\varepsilon''$ touches the smooth locus of any fiber, we deduce that $\Gamma_e = \varepsilon^*\Gamma_0$. In particular,
	$$
	((K_{X''_e/B_0} - \varepsilon^*K_{X_0/B_0}) \cdot \Gamma_e) = 0.
	$$
	Thus it follows that
	\begin{eqnarray*}
		((L''_e - K_{X''_e/B_0}) \cdot \Gamma_e) & = & ((L''_e - \varepsilon^*K_{X_0/B_0}) \cdot \Gamma_e) \\
		& = & ((M''_e-\varepsilon^*K_{X_0/B_0}) \cdot \Gamma_e) + 2(b-1)\\
		& = & (p^e (M-K_{X_0/B_0}) \cdot \Gamma_0) + 2(b-1)\\
		& = & p^e \deg A_0+2(b-1).
	\end{eqnarray*}
	
	Now we apply Lemma \ref{lemmacharp} to $L''_e$. Together with the above calculation, it follows that
	\begin{eqnarray*}
		L''^3_e & \ge & \frac{4}{3} \left( h^0(X''_e, \rounddown{L''_e}) - 2 \right) + \frac{p^e}{3} \deg A_0+\frac{2}{3}(b-1) \\
		& \ge & \frac{4}{3} \left(h^0(X_e, K_{X_e/B_0}) - 2\right) + \frac{p^e}{3} \deg A_0+ \frac{2}{3}(b-1) \\
		& \ge& \frac{4}{3} \left( h^0(X_e, f_{e*} \omega_{X_e/B_0}) - 2 \right) + \frac{p^e}{3} \deg A_0  \\
		& \ge & \frac{4}{3} \left( \deg f_{e*} \omega_{X_e/B_0} - 2b \right)  + \frac{p^e}{3} \deg A_0.
	\end{eqnarray*}
	On the other hand, we have
	$$
	L''^3_e  =  \left( M''_e + 2(b-1)F \right)^3 = M''^3_e + 6(b-1) = p^e M^3 + 6(b-1)
	$$
	as well as
	$$
	\deg f_{e*} \omega_{X_e/B_0} = p^e \deg f_* \omega_{X_0/B_0}.
	$$
	Combine all the above together and let $e \to \infty$. It follows that
	$$
	(K_{X_0/B_0} + f^*A_0)^3 = M^3 \ge \frac{4}{3} \deg f_* \omega_{X_0/B_0} + \frac{1}{3} \deg A_0.
	$$
	Go back to characteristic zero. The above inequality just says that
	$$
	(K_{X/B} + f^*A)^3 \ge \frac{4}{3} \deg f_* \omega_{X/B} + \frac{1}{3} \deg A
	$$
	for the original fibration $f: X \to B$. Since $\deg A$ is arbitrary, we deduce that
	$$
	K_{X/B}^3 \ge \frac{4}{3} \deg f_* \omega_{X/B}
	$$
	by letting $\deg A \to 0^+$. Thus the proof is completed.
	
	\section{Proof for $(1, 1)$-surface fibers} \label{11proof}
	
	The main result in this section is the following:
	
	\begin{prop}\label{prop:(1,1) surface}
		Let $f\colon X\rightarrow B$ be as in \S \ref{setting}. Suppose that the general fiber $F$ of $f$ is a $(1,1)$-surface. Then we have
		$$
		K_{X/B}^3\ge 2\mathrm{deg}f_{*}\omega_{X/B}.
		$$
	\end{prop}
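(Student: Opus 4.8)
The plan is to deduce the bound from a slope-type estimate for the relative \emph{bi}canonical sheaf. Write $d_1 = \deg f_*\omega_{X/B}$ and $d_2 = \deg f_*\omega_{X/B}^{[2]}$, and note that $p_g(F) = 1$ forces $f_*\omega_{X/B}$ to be a \emph{line} bundle $\mathcal{L}$ of degree $d_1$, while $f_*\omega_{X/B}^{[2]}$ has rank $P_2(F) = 3$. I would establish two independent ingredients: (A) $K_{X/B}^3 \ge \tfrac12 d_2$, proved by the mod $p$ machinery of Proposition \ref{prop:(1,2) surf b2} applied to $|2K|$; and (B) the numerical relation $d_2 \ge 4 d_1$, a characteristic-zero consequence of the intrinsic geometry of $(1,1)$-surfaces. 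Granting both, $K_{X/B}^3 \ge \tfrac12 d_2 \ge \tfrac12\cdot 4 d_1 = 2 d_1$, which is exactly the assertion.

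For (B) I would use the canonical curve. The evaluation $f^*\mathcal{L} \to \omega_{X/B}$ cuts out, after discarding vertical components, an effective divisor $\mathcal{D}\in|\omega_{X/B}\otimes f^*\mathcal{L}^{-1}|$ restricting on a general fiber to the unique canonical curve $D\in|K_F|$, which for a $(1,1)$-surface is a genus-$2$ curve. Since $D\sim K_F$, adjunction gives $2K_F|_D = (K_F+D)|_D = \omega_D$, i.e.\ relatively $(\omega_{X/B}^{[2]}\otimes f^*\mathcal{L}^{-1})|_{\mathcal{D}}\cong\omega_{\mathcal{D}/B}$. Multiplication by the section defining $\mathcal{D}$ yields $0\to\omega_{X/B}\to\omega_{X/B}^{[2]}\otimes f^*\mathcal{L}^{-1}\to\omega_{\mathcal{D}/B}\to 0$, and since $q(F)=0$ gives $R^1 f_*\omega_{X/B}=0$, pushing forward produces a short exact sequence $0\to\mathcal{L}\to f_*\omega_{X/B}^{[2]}\otimes\mathcal{L}^{-1}\to g_*\omega_{\mathcal{D}/B}\to 0$, where $g=f|_{\mathcal{D}}\colon\mathcal{D}\to B$ is a genus-$2$ fibration of a surface over $B$. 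Comparing degrees gives $\deg g_*\omega_{\mathcal{D}/B} = d_2 - 4 d_1$, and Fujita's semipositivity theorem for the genus-$2$ fibration $g$ forces this to be $\ge 0$. Resolving $\mathcal{D}$ and absorbing the vertical corrections only move the inequality in the favorable direction, so (B) follows.

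For (A) I would run the argument of \S\ref{charppreparation}--\S\ref{modpproof} verbatim with $|K|$ replaced by $|2K|$. The geometric input is that for a $(1,1)$-surface $|2K_F|$ is base-point-free and generically finite onto a surface: the exact sequence $0\to H^0(K_F)\to H^0(2K_F)\to H^0(\omega_D)\to 0$ exhibits $|2K_F|$ as restricting to the base-point-free canonical $g^1_2$ on $D$, while the member $2D$ covers the complement of $D$, so $|2K_F|$ has no base point, and $(2K_F)^2 = 4 > 0$ prevents it from being composed with a pencil. After reducing to characteristic $p$ via \S\ref{interal model}, passing to the $e$-th Frobenius base change $X_e$, and forming the divisor $L''_e$ attached to $2(K_{X_e/B_0}+f^*A_0)$, one needs the bicanonical analogues of Lemma \ref{lem:(1,2) surface not pencil} (so that $|2K_{X_e}|$ is not composed with a pencil, via semipositivity of $f_{e*}\omega_{X_e/B_0}^{[2]}$ and the fiberwise fact above) and of Lemma \ref{lemmacharp}. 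Because $|2K_F|$ is base-point-free, a general member $C=S\cap F'$ of the restricted movable system satisfies $C\equiv 2K_F$, whence $(\beta^*L''_e|_S\cdot C)=(2K_F)^2=4$; restricting a general $S\in|N|$ and intersecting therefore yields $L''^3_e \ge (\beta^*L''_e|_S)^2 \ge (\beta^*L''_e|_S\cdot S|_S) = t\,(\beta^*L''_e|_S\cdot C) = 4t \ge 4\bigl(h^0(X_e,\rounddown{L''_e})-2\bigr)$, so the factor $\tfrac43$ of the $(1,2)$ case is replaced by $4$. Since $h^0(X_e,\rounddown{L''_e})$ grows like $p^e d_2$ while $L''^3_e$ grows like $8p^e(K_{X/B}+f^*A)^3$, letting $e\to\infty$ and then $\deg A\to 0^+$ gives $8K_{X/B}^3\ge 4 d_2$, which is (A).

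The main obstacle, exactly as in the $(1,2)$ case, lies in (A): one must ensure that the favorable bicanonical geometry of $(1,1)$-surfaces persists for fibers \emph{specialized from characteristic zero} and survives the Frobenius base change together with the normalization and resolution $\varepsilon',\varepsilon''$, which yield only Gorenstein, non-smooth total spaces. Keeping track of the fixed part, verifying that $L''_e-K_{X''_e}$ stays vertical, and checking that all error terms in the bicanonical analogue of Lemma \ref{lemmacharp} are bounded independently of $e$ (hence disappear in the limit) are the delicate points. By contrast, relation (B) is robust and essentially formal once the auxiliary genus-$2$ fibration $g\colon\mathcal{D}\to B$ has been produced.
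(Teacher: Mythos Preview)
Your overall two–step strategy coincides with the paper's: prove (A) $K_{X/B}^3\ge\tfrac12\,d_2$ by the mod $p$/Frobenius machinery applied to the bicanonical system, and then combine with a relation (B) between $d_2$ and $d_1$. However, your execution of (A) has a genuine gap, and your (B) is a different route from the one the paper takes.

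\medskip

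\textbf{On (A).} Your computation tries to transplant the $(1,2)$ argument verbatim, writing $(\beta^*L''_e|_S\cdot S|_S)=t\,(\beta^*L''_e|_S\cdot C)$ with $C=S\cap F'$. This identity relies on $N|_S\equiv tC$, which in the $(1,2)$ case holds because $|K_F|$ maps $F$ to a \emph{curve}, so $\phi_M$ has a two–dimensional image and $\phi_M|_S$ is a fibration with fiber $C$. You yourself note that for a $(1,1)$-surface $|2K_F|$ is \emph{generically finite onto a surface}; hence the image of $\phi_M$ is (generically) three–dimensional, $\phi_M|_S$ is generically finite, and $N|_S$ is \emph{not} numerically a multiple of the fiber curve $C$. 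So the displayed chain breaks at that equality. What the paper does instead (Lemma~\ref{lem:(1,1) surface1}) is a case analysis on $\dim\phi_M(\Sigma')\in\{1,2,3\}$, yielding $L^3\ge 4h^0(\Sigma,\rounddown{L})-12$ in every case. The $\dim=3$ case uses $\deg\phi_M\ge 4$ (since $|2K_F|$ is a degree-$4$ cover of $\PP^2$); the $\dim=2$ case needs the non-obvious \emph{Fact 2}: any movable prime divisor $D$ on a $(1,1)$-surface satisfies $(K_F\cdot D)\ge 2$. Your sketch is missing precisely this case split and the Fact~2 input.

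\medskip

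\textbf{On (B).} Here you genuinely diverge from the paper. The paper does not prove $d_2\ge 4d_1$; it invokes Ohno's formula (a consequence of Reid's Riemann--Roch on the terminal $3$-fold)
\[
d_2 \;=\; \tfrac12 K_{X/B}^3 + 3\,d_1 + l(2), \qquad l(2)\ge 0,
\]
and substitutes into (A) to get $\tfrac32 K_{X/B}^3\ge 3d_1+l(2)$, hence $K_{X/B}^3\ge 2d_1$. Your route via the relative canonical curve $\mathcal{D}\to B$ and Fujita semipositivity is an attractive alternative and does lead to $d_2\ge 4d_1$; once (A) is in place this is enough. Note though that the paper's identity is strictly stronger information (it even yields the extra term $\tfrac23 l(2)$), and it sidesteps the technical checks you flag about singularities of $\mathcal{D}$, vertical components of the evaluation section, and whether adjunction and Fujita apply cleanly on a possibly non-normal $\mathcal{D}$ sitting inside a terminal (not Gorenstein) $X$.
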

	
	The whole section is devoted to the proof of it, and our method is also via a characteristic $p > 0$ argument.
	
	\subsection{Positive characteristic results.} Assume that $k$ is a field of characteristic $p>0$. Let $f\colon \Sigma \rightarrow B$ be a fibration defined over $k$ from a smooth $3$-fold $\Sigma$ to a smooth curve $B$ with general fiber $F$ a minimal surface of general type. Suppose that $F$ is a $(1, 1)$-surface. Similarly to the $(1, 2)$-surface case in \S \ref{charppreparation}, here we also assume that $F$ is ``specialized from characteristic zero", i.e., it is a general closed fiber of an integral model of a $(1, 1)$-surface in characteristic zero. In particular, we may assume that $h^1(F, \CO_F) = 0$. Thus $\chi(F, \CO_F) = 2$ and $h^0(F, 2K_F) = 3$.
	
	\subsubsection{Some facts} We need the following geometric results on $F$:
	
	\textbf{Fact $1$}: \textit{$|2K_F|$ is base point free.} 
	
	In characteristic zero, this was proved by Catanese in \cite[Theorem 1.4]{Catanese}. Since here we assume that $F$ comes from an integral model of a $(1, 1)$-surface in characteristic zero, we are safe to use it under the current setting.
	
	\textbf{Fact $2$}: \textit{Let $D$ be a prime divisor on $F$ such that $h^0(F, D) \ge 2$. Then $(K_F\cdot D) \ge 2$.}
	
	In characteristic zero, this was proved by Chen in \cite[Lemma 2.5]{Chen}. In the following, we present a proof which is valid regardless of the field characteristic.
	
	\begin{proof} [Proof of Fact $2$]
		We argue by contradiction. Suppose that $(K_F\cdot D) \le 1$. Since $p_a(D) \ge 2$, by the parity and the Hodge index theorem, the only possibility that we have is $(K_F\cdot D) = D^2 = 1$. Thus $D \equiv K_F$. Notice that $h^0(F, D) \ge 2$ implies that $D \not\sim K_F$. In particular, $h^0(F, D-K_F) = 0$. Therefore, by the Riemann-Roch theorem,
		$$
		h^0(2K_F - D) \ge \chi(F, \CO_F) + \frac{((2K_F-D)\cdot (K_F-D))}{2} = 2.
		$$
		
		On the other hand, by Fact $1$, $|2K_F|$ induces a generically finite morphism. This particularly implies that the following restriction map
		$$
		H^0(F, 2K_F) \to H^0(D, 2K_F|_D)
		$$
		has  image of dimension at least $2$, because $D$ is movable. Thus 
		$$
		h^0(F, 2K_F - D) \le h^0(F, 2K_F) - 2 = 1.
		$$
		This is a contradiction.
	\end{proof}
	
	\subsubsection{Key lemma} The following lemma is the key to Proposition \ref{prop:(1,1) surface}.
	
	\begin{lemma} \label{lem:(1,1) surface1}
		Let $L$ be a nef $\QQ$-divisor on $\Sigma$ such that $L|_F \sim 2K_F$. Then
		$$
		L^3\ge 4h^0(\Sigma, \rounddown{L})-12.
		$$
	\end{lemma}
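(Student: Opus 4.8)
The plan is to reduce the threefold inequality to a statement about a general member of the moving part of $|\rounddown{L}|$, and then to extract the \emph{slope} $4$ from the special geometry of the bicanonical system of a $(1,1)$-surface. Throughout write $h:=h^0(\Sigma,\rounddown{L})$.

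First I would dispose of the trivial range: since $L$ is nef we have $L^3\ge 0$, so the assertion is automatic whenever $h\le 3$; thus assume $h\ge 4$. Write $|\rounddown{L}|=|M|+Z$ with $Z$ the fixed divisor, and apply Lemma \ref{lem:resolution} (valid in positive characteristic by Remark \ref{remarkcharp}) to obtain $\beta\colon \Sigma_0\to\Sigma$ such that the moving part $N$ of $\beta^*M$ is base point free. Because $\beta^*L-N$ is effective and $\beta^*L$ is nef, taking a general $S\in|N|$ gives
$$
L^3=(\beta^*L)^3\ge (\beta^*L)^2\cdot N=\big((\beta^*L)|_S\big)^2=:L_S^2,
$$
while the restriction sequence together with $N\sim S$ yields $h=h^0(\Sigma_0,N)\le h^0(S,L_S)+1$. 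Let $\phi=\phi_{|N|}\colon \Sigma_0\to W\subset\PP^{h-1}$ be the (nondegenerate) morphism defined by $|N|$; on a general fibre $F$ the restricted system $|N|_F|$ is a subsystem of $|2K_F|$, which by Fact $1$ is base point free.

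Next I would split according to $\dim W$. If $\dim W=3$, then $\phi$ is generically finite and $\dim\phi(F)=2$, so $|N|_F|$ is the full bicanonical system $|2K_F|$ (recall $h^0(F,2K_F)=3$) and $\phi|_F$ is the bicanonical map, which is generically $4$-to-$1$ onto a plane since $(2K_F)^2=4$. A general point $w\in W$ lies on $\phi(F_b)$ for some fibre $F_b$ and is a general point there, hence has at least $4$ preimages; therefore $\deg\phi\ge 4$. Combined with the bound $\deg W\ge h-3$ for a nondegenerate threefold in $\PP^{h-1}$, this gives
$$
L^3\ge N^3=\deg\phi\cdot\deg W\ge 4\,(h-3),
$$
as desired. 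If instead $\dim W\le 2$, I first note that $\dim\phi(F)=2$ is impossible: it would force $W=\phi(F)$ to span at most a $\PP^2$, i.e. $h\le 3$, contrary to assumption. Hence $\phi$ is, fibrewise, composed with a pencil, $S$ is fibred over a curve with general fibre $C=S|_F$ a member of this pencil, and $N|_S\equiv tC$ with $t\ge h^0(S,L_S)-1\ge h-2$. Here $C\le 2K_F$ is movable, so by Fact $2$ a general reduced member has $K_F\cdot C=2$; since $2K_F-C$ is then numerically $K_F$-trivial, we get $L_S\cdot C=2K_F\cdot C=4$, whence
$$
L^3\ge L_S^2\ge L_S\cdot N|_S=4t\ge 4\,(h-2).
$$

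The heart of the matter — and the step I expect to be most delicate — is producing the slope $4$ rather than the slope $2$ that a naive Clifford estimate on a fibre would give. Both routes to the extra factor exploit the fine geometry of $(1,1)$-surfaces specialized from characteristic zero: in the generically finite case it is the degree-$4$ bicanonical map (Fact $1$), used through the preimage count for $\deg\phi$, and in the composed case it is Fact $2$, which forbids movable fibre-curves of canonical degree less than $2$ and thereby pins the pencil members to the numerical class $2K_F$. The remaining technical care lies in the Bertini-type irreducibility and reducedness of the general members $S$ and $C$ in characteristic $p$, for which I would invoke \cite[Thm. I.6.3]{Zariski} exactly as in \S \ref{charppreparation}, and in checking that the possible presence of $(-2)$-curves does not disturb the numerical identity $L_S\cdot C=4$.
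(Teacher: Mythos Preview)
Your overall strategy coincides with the paper's: split according to $\dim W$, use Fact~1 (degree of the bicanonical map) in the generically finite case and Fact~2 in the composed-with-a-pencil case. The $\dim W=3$ case is handled correctly and exactly as in the paper. However, your merging of the cases $\dim W\le 2$ into a single argument leaves a real gap.

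After excluding $\dim\phi(F)=2$ you write ``hence $\phi$ is, fibrewise, composed with a pencil'' and take $C=S|_F$ as a moving curve on $F$. But you have only shown $\dim\phi(F)\le 1$; the case $\dim\phi(F)=0$ is not excluded. In that case $\phi$ factors through the fibration $f'=f\circ\beta$, so necessarily $\dim W=1$, the divisor $N$ is a pullback from $B$, and for a general fibre $F$ one has $N|_F\sim 0$. Then $S\cap F=\emptyset$, your curve $C=S|_F$ does not exist, and your chain $L^3\ge L_S^2\ge L_S\cdot N|_S$ collapses to $L^3\ge 0$ since $N|_S=0$. The paper treats this $\dim W=1$ case separately and directly, without restricting to $S$: using $h^0(\Sigma,\rounddown{L}-F)>0$ one gets $M\equiv aF'$ with $a\ge h-1$, whence $L^3\ge(\pi^*L)^2\cdot M=a\,(2K_F)^2=4a\ge 4(h-1)$. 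You need to insert this missing case.

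Two minor slips: Fact~2 only gives $K_F\cdot C\ge 2$, so write $L_S\cdot C\ge 4$ (which is all you need), not $=4$; and the bound on $t$ should read $t\ge h^0(S,N|_S)-1\ge h-2$, not $t\ge h^0(S,L_S)-1$, since $h^0(S,L_S)\ge h^0(S,N|_S)$ and the inequality would otherwise go the wrong way.
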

	
	\begin{proof}
		To prove this result, we may assume that $h^0(\Sigma, \rounddown{L})\ge 4$. From the following exact sequence
		\begin{align*}
		0\to H^0(\Sigma, \rounddown{L}-F)\to H^0(\Sigma, \rounddown{L})\to H^0(F, 2K_F),
		\end{align*}
		we know that $h^0(\Sigma, \rounddown{L}-F)>0$. Choose a blow-up $\pi: \Sigma'\to\Sigma$ such that the movable part $|M|$ of $\pi^*|\rounddown{L}|$ is base point free. Denote by $F'=\pi^*F$ and by $\sigma:F'\to F$ the natural contraction.
		
		Consider the morphism
		\begin{align*}
		\phi_M :\Sigma'\to \mathbb{P}^{h^0(\Sigma', M)-1}
		\end{align*}
		defined by the complete linear system $|M|$. If $\dim \phi_M(\Sigma')=1$, as we have $h^0(\Sigma, \rounddown{L}-F)=h^0(\Sigma', M-F')>0$, we know that the general fiber of the pencil induced by $\phi_M$ is nothing but $F'$. Thus $ M\equiv aF'$, where $a \ge h^0(\Sigma', M)-1$. It follows that
		$$
		L^3 \ge ((\pi^*L)^2\cdot M)=a (2\sigma^*K_F)^2=4a\ge 4h^0(\Sigma', M)-4.
		$$
		
		Now assume that $\dim \phi_M(\Sigma')=2$. Denote by $C'$ a general fiber of $\phi_M$. Then $M^2 \equiv a C'$ as a numerical equivalence of $1$-cycles, where $a \ge h^0(\Sigma', M)-2$.
		It follows that
		$$
		L^3 \ge ((\pi^*L)\cdot M^2) = a \left((\pi^*L) \cdot C'\right) \ge (h^0(\Sigma', M)-2) \left( (\pi^*L) \cdot C' \right).
		$$
		On the other hand, notice that $C'$ lies in a general fiber $F'$. Let $C = \sigma(C')$. Since $C$ gives a rational pencil on $F$, we have $h^0(F, C) \ge 2$. By Fact $2$, $(K_F\cdot C) \ge 2$, which implies that $((\pi^*L) \cdot C') \ge 4$. Thus it follows that
		$$
		L^3\ge 4h^0(\Sigma, \rounddown{L})-8.
		$$
		
		Finally, we consider the case when $\dim \phi_M(\Sigma')=3$. In this case, $\phi_M|_{F'}$ is generically finite. Since
		$h^0(F, 2K_F)=3$ and $M|_{F'} \le \sigma^*(2K_F)$, this forces $M|_{F'} \sim \sigma^*(2K_F)$. By Fact 1, $|2K_F|$ induces a generically finite morphism of degree $4$. This implies that $\deg \phi_M \ge 4$. Thus
		$$
		L^3 \ge M^3 \ge ( \deg \phi_M) (h^0(\Sigma', M)-3) \ge 4h^0(\Sigma, \rounddown{L})-12.
		$$
		The proof is completed.
	\end{proof}
	
	\subsection{Proof of Proposition \ref{prop:(1,1) surface}.}
	
	Let $f: X\to B$ be as in Proposition \ref{prop:(1,1) surface}. Let $\pi : Y \to X$ be a resolution of singularities of $X$ such that $\pi$ is isomorphic over the smooth locus of $X$. Let $\tilde{f}: Y\to B$ be the induced fibration. Denote by $A$ an ample  and effective $\QQ$-divisor on $B$. We extend $Y \to X\to B$ to an integral model $\mathcal{Y}\to\mathcal{X}\to\mathcal{B}\to \CZ$ and further extend $A$ to $\mathcal{A}$ exactly the same as in \S \ref{interal model} so that (i)--(iv) in \S \ref{interal model} all hold here. 
	
	Let $\tilde{f}: Y_0\to X_0\to B_0$ be a general closed fiber of $\mathcal{Y}\to \mathcal{X}\to \mathcal{B}$ defined over a field of positive characteristic. We still use $\pi$ to denote the morphism $Y_0\to X_0$. Apply Lemma \ref{lem:(1,1) surface1} to the $\QQ$-divisor $2(\pi^*K_{X_0/B_0}+\tilde{f}^*A_0)$ and follow the same strategy as in \S \ref{modpproof}. Then we deduce that
	$$
	8(\pi^*K_{X_0/B_0}+\tilde{f}^*A_0)^3 \ge 4 \deg f_* \omega_{X_0/B_0}^{[2]}.
	$$
	Go back to characteristic zero and use the arbitrarity of $\deg A$. It follows that
	\begin{equation} \label{eq:(1,1)}
	2K_{X/B}^3\ge\mathrm{deg}f_{*}\omega_{X/B}^{[2]}.
	\end{equation}
	
	Since $F$ is a (1,1)-surface, we have $R^1f_{*}\omega_X=0$ and $R^2f_{*}\omega_X=\omega_B$. Furthermore, $f_{*}\omega_{X/B}$ is a semi-positive line bundle. Thus we have $f_{*}\omega_{X/B}=\mathcal{O}_B(D)$ for a certain divisor $D$ on $B$. Thus we have
	\begin{eqnarray*}
		\chi(X, \omega_X) &= & \chi(B, f_{*}\omega_X)+\chi(B, \omega_B)\\
		& = & \chi(B, \CO_B(K_B+D)) + b - 1\\
		& = & \deg D+2(b-1).
	\end{eqnarray*}
	Here we recall a result of Ohno \cite[Lemma 2.8]{Ohno} which says that
	$$
	\deg f_* \omega_{X/B}^{[2]} = \frac{1}{2}K_{X/B}^3 + 3 \chi(X, \omega_X) - 3 \chi(F, \CO_F)(b-1) +l(2),
	$$
	where $l(2) \ge 0$ is the correction term in Reid's Riemann-Roch formula \cite{Reid}. The above two equalities together yield
	$$
	\deg f_* \omega_{X/B}^{[2]} = \frac{1}{2}K_{X/B}^3  + 3 \deg f_* \omega_{X/B} + l(2).
	$$
	Combining this with \eqref{eq:(1,1)}, we have
	$$
	K_{X/B}^3 \ge 2 \deg f_* \omega_{X/B} + \frac{2}{3} l(2) \ge 2 \deg f_* \omega_{X/B} .
	$$
	The whole proof is completed.

	\section{The remaining case} \label{remainingproof}
	Our main result in this section is the following.
	
	\begin{prop} \label{prop:pg3}
		Let $f: X \to B$ be as in \S \ref{setting}. Suppose that $p_g(F) \ge 3$. Then we have
		$$
		K_{X/B}^3 \ge \alpha \deg f_* \omega_{X/B},
		$$
		where $\alpha = \max \{2, \frac{4K^2_F}{K^2_F+4}\}$
	\end{prop}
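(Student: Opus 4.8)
The plan is to run the same characteristic $p>0$ template already used for the $(1,1)$- and $(1,2)$-fiber cases. By the Lefschetz principle and the integral model of \S\ref{interal model}, it suffices to prove, for a fibration $f\colon X_0\to B_0$ over a field of characteristic $p>0$ whose general fiber $F$ is a $p_g\ge 3$ surface specialized from characteristic zero, a slope-like inequality for $M=K_{X_0/B_0}+f^*A_0$ (with $A_0$ the reduction of a small ample $\QQ$-divisor $A$). Following \S\ref{modpproof} I would pass to the $e^{\mathrm{th}}$ Frobenius base change $f''_e\colon X''_e\to B_0$, set $L''_e=\varepsilon^*M+2(b-1)F$ with $b=g(B)$, use that $f_{e*}\omega$ is semi-positive of the expected rank so that $h^0$ grows like $p^e$ and $\rounddown{L''_e}|_F\sim K_F$, apply the key lemma below, and let $e\to\infty$ to absorb all $O(1)$ error terms; passing back to characteristic zero and letting $\deg A\to 0$ then gives $K_{X/B}^3\ge\alpha\deg f_*\omega_{X/B}$.

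Everything thus reduces to a key lemma: if $L$ is a nef and big $\QQ$-divisor on a Gorenstein $3$-fold $\Sigma$ with $\rounddown{L}|_F\sim K_F$ and $|\rounddown{L}|$ of fiber-dimension two, then $L^3\ge\alpha\,(h^0(\Sigma,\rounddown{L})-c)$ for a universal constant $c$. To prove it I would resolve the base locus by $\beta\colon\Sigma_0\to\Sigma$ as in Lemma \ref{lem:resolution}, take the base-point-free movable part $|N|$ of $\beta^*|\rounddown{L}|$, and split according to $\dim\phi_N(\Sigma_0)$, mirroring Lemma \ref{lem:(1,1) surface1} but with $L$ restricting to $K_F$ instead of $2K_F$.

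If $\dim\phi_N(\Sigma_0)=1$, then $N\equiv aF'$ with $a\ge h^0-1$, whence $L^3\ge a\,(L|_F)^2=aK_F^2\ge K_F^2(h^0-1)$; since $K_F^2\ge 2$ (Noether, as $p_g\ge 3$) and $K_F^2\ge\frac{4K_F^2}{K_F^2+4}$ trivially, we have $K_F^2\ge\alpha$ and this beats $\alpha$. If $\dim\phi_N(\Sigma_0)=2$, then $|K_F|$ is composed with a pencil, $N^2\equiv aC'$ with $C=\sigma(C')$ a moving curve on $F$, and $L^3\ge a\,(K_F\cdot C)\ge 2(h^0-2)$; to also secure $\frac{4K_F^2}{K_F^2+4}$ here I would invoke the composed-with-a-pencil estimate of Proposition \ref{prop:general case}. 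The heart is $\dim\phi_N(\Sigma_0)=3$, i.e. $|K_F|$ not composed with a pencil. Here I take a general $S\in|N|$, a Gorenstein integral surface carrying the fibration $g=f'|_S\colon S\to B$ whose general fiber $C$ is a general canonical curve of $F$, so $g(C)=K_F^2+1$ and $((\beta^*L)|_S\cdot C)=(K_F\cdot C)=K_F^2=:d$. Writing $D=(\beta^*L)|_S$ I have $L^3\ge D^2$, and the task becomes a two-dimensional estimate bounding $D^2$ below. Since $\deg D|_C=d<2g(C)-2$, the divisor $D|_C$ is special and Clifford's theorem gives $h^0(C,D|_C)\le\frac{d}{2}+1$; combining this with a slope/Harder--Narasimhan analysis of $(S,g,D)$ should produce a contribution of shape $\frac{4d}{d+4}(h^0-c)$, while the integrality of $h^0(C,D|_C)$ forces the slope to be at least $2$ as soon as $d\ge 3$. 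Taking the better of the two bounds yields $\alpha$.

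The main obstacle is exactly this two-dimensional estimate in the case $\dim\phi_N(\Sigma_0)=3$: the crude degree bound $D^2\ge h^0-2$ only yields slope $\approx 1$, so I must genuinely exploit the fibration $g$ together with the Clifford specialty of $D|_C$, and control how many sections survive $H^0(S,D)\to H^0(C,D|_C)$ along the Harder--Narasimhan filtration; this is where the sharp constant $\frac{4d}{d+4}$, rather than the $O(1)$ coefficient of a naive volume or Xiao-type estimate, must be extracted. A second delicate point is the single surface type $(K_F^2,p_g)=(2,3)$: there $d=2$, the canonical curves are hyperelliptic and $D|_C$ is a $g^1_2$, so the generic-fiber Clifford bound gives only slope $\tfrac43$, whereas the proposition asserts $2$. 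For this case I expect to need a surface-specific upgrade --- e.g. working with $|2K_F|$ in the spirit of Lemma \ref{lem:(1,1) surface1}, or exploiting the double-cover structure of the canonical map --- to raise $\tfrac43$ to $2$.
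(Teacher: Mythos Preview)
Your overall template --- pass to characteristic $p$, set $L=\pi^*(K_{X_0/B_0}+f^*A_0)$ on a smooth model, prove a key inequality $L^3\ge\alpha\,(h^0(\Sigma,\rounddown{L})-c)$, then take the Frobenius limit and let $\deg A\to 0$ --- is exactly what the paper does. The difference lies entirely in how the key inequality is obtained, and here the paper takes two shortcuts that bypass both obstacles you flag.

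For the constant $\frac{4K_F^2}{K_F^2+4}$, the paper does \emph{not} run a Clifford/Harder--Narasimhan analysis on the auxiliary surface $S$. It simply invokes \cite[Theorem 7.1]{Zhang} (recorded here as Lemma~\ref{lemma:pg31}): for any nef $L$ with $L|_F\sim K_F$ and $F$ not a $(1,2)$-surface, one has
$h^0(\Sigma,\rounddown{L})\le\bigl(\tfrac14+\tfrac{1}{K_F^2}\bigr)L^3+\tfrac{K_F^2+4}{2}$,
with no case split on $\dim\phi$ at all. This also patches a gap in your $\dim\phi_N=2$ branch: your bound there is only $2(h^0-2)$, and your fallback to Proposition~\ref{prop:general case} yields $\tfrac{4(p_g-1)}{p_g}$, which is strictly smaller than $\tfrac{4K_F^2}{K_F^2+4}$ whenever $K_F^2>4(p_g-1)$ --- a possibility you have not excluded.

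For the constant $2$ when $p_g(F)=3$, the paper proves a separate Lemma~\ref{lemma:pg32} via the same trichotomy on $\dim\phi_M$; the cases $\dim\phi_M=1,2$ go exactly as you wrote. In the case $\dim\phi_M=3$ the argument is much simpler than anything you propose: since $p_g=3$ the canonical image lies in $\PP^2$, so if the canonical map were birational then $F$ would be rational; hence $\deg\phi_M\ge 2$, and therefore $L^3\ge M^3\ge(\deg\phi_M)(h^0-3)\ge 2(h^0-3)$. This one-line observation disposes of your $(K_F^2,p_g)=(2,3)$ worry without any recourse to $|2K_F|$ or double-cover structure.
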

	
	It is obvious that $\frac{4K^2_F}{K^2_F+4} \ge 2$ once $K_F^2 \ge 4$. The strategy of the proof here is similar to previous ones, and we still need the mod $p$ reduction to get the desired lower bound.

	\subsection{Positive characteristic results}
	From previous sections, we have seen that results in positive characteristics with the desired slope and some error terms, such as Lemma \ref{lemmacharp} and Lemma \ref{lem:(1,1) surface1}, are crucial. Fortunately, when the general fiber of $f$ is not a $(1, 2)$-surface, some results of this type are known.
	
	Suppose that $k$ is a field of characteristic $p > 0$. Let $f: \Sigma \to B$ be a fibration defined over $k$ from a smooth $3$-fold $\Sigma$ to a smooth curve $B$ with general fiber $F$ a minimal surface of general type. Let $L$ be a nef $\QQ$-divisor on $\Sigma$ such that $L|_F \sim K_F$. Notice that this assumption implies that the fractional part of $L$ must be vertical with respect to $f$.
	
	\begin{lemma} \label{lemma:pg31}
		Suppose that $F$ is not a $(1, 2)$-surface. Then
		$$
		h^0(\Sigma, \rounddown{L}) \le \left(\frac{1}{4} + \frac{1}{K_F^2}\right) L^3 + \frac{K_F^2 + 4}{2}.
		$$
	\end{lemma}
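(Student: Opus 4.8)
The plan is to adapt the geometric strategy of Lemma \ref{lem:(1,1) surface1}, now applied to the canonical rather than the bicanonical system, and to extract the sharp Cornalba--Harris--Xiao coefficient $\tfrac14 + \tfrac1{K_F^2}$ from the geometry of the canonical map of $F$. First I would reduce to the case where $h^0(\Sigma, \rounddown{L})$ is large, since otherwise the constant term $\tfrac{K_F^2+4}{2}$ on the right already dominates and the inequality is trivial. Because $L|_F \sim K_F$, the exact sequence $0 \to H^0(\Sigma, \rounddown{L} - F) \to H^0(\Sigma, \rounddown{L}) \to H^0(F, K_F)$ shows that $\rounddown{L} - F$ is effective once $h^0(\Sigma,\rounddown{L}) > p_g(F)$. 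I would then take a blow-up $\pi\colon \Sigma' \to \Sigma$ as in Lemma \ref{lem:resolution} (valid in characteristic $p$ by Remark \ref{remarkcharp}) so that the movable part $|M|$ of $\pi^*|\rounddown{L}|$ is base point free, and study the morphism $\phi_M$ defined by $|M|$, writing $\sigma\colon F' \to F$ for the contraction on a general fibre. Throughout I will use $M \le \pi^*L$ and the nefness of $L$ to pass from intersection numbers of $M$ to the bound $M^3 \le L^3$, together with $(M|_{F'})^2 \le (\sigma^*K_F)^2 = K_F^2$.

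The argument then splits according to $\dim \phi_M(\Sigma') \in \{1,2,3\}$. When $\dim \phi_M(\Sigma') = 1$ the image is a curve, and since $\rounddown{L}-F$ is effective the general fibre of the induced pencil is a fibre $F'$, so $M \equiv a F'$ with $a \ge h^0(\Sigma,\rounddown{L}) - 1$; this gives $L^3 \ge (\pi^*L)^2\cdot M = a\,K_F^2 \ge (h^0(\Sigma,\rounddown{L})-1)K_F^2$, which comfortably implies the stated inequality because $K_F^2 \ge \tfrac{4K_F^2}{K_F^2+4}$. When $\dim \phi_M(\Sigma') = 2$ I would write $M^2 \equiv a C'$ for a general fibre $C'$ of $\phi_M$ with $a \ge h^0(\Sigma,\rounddown{L}) - 2$, separating whether $C'$ lies in a fibre $F'$, so that $C = \sigma(C')$ moves in a pencil on $F$, or is horizontal, and bounding $(\pi^*L\cdot C') = (K_F\cdot C)$ from below. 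Finally, when $\dim \phi_M(\Sigma') = 3$ the canonical map of $F$ is generically finite, and I would combine the degree of $\phi_M$ with the degree of its image and the genus $g(C) = K_F^2 + 1$ of a general canonical curve of $F$, applying Clifford's theorem to the restriction $M|_C$, which has degree $K_F^2 = \tfrac12 \deg K_C$ and is therefore special.

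The main obstacle is precisely the extraction of the sharp coefficient $\tfrac14 + \tfrac1{K_F^2}$ in the two higher-dimensional cases. Unlike Lemma \ref{lem:(1,1) surface1}, where the degree $4$ of the bicanonical map of a $(1,1)$-surface produced the constant directly, here the degree of the canonical map of $F$ is typically too small, so the naive bound $M^3 \ge (\deg \phi_M)(h^0(\Sigma,\rounddown{L})-3)$ does not suffice. The sharp constant must instead come from a Clifford-type estimate along the fibration of a general surface $S \in |M|$ over $B$, whose fibres are (sub)canonical curves of $F$ carrying a half-canonical-degree divisor; controlling this estimate, ruling out the low pencils in the composed-with-a-pencil case, and ensuring the speciality of $M|_C$ is exactly where the hypothesis that $F$ is \emph{not} a $(1,2)$-surface becomes indispensable, since that is the surface type for which the true slope drops to $\tfrac43$.
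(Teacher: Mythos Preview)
The paper does not actually prove this lemma in-line: its entire proof is a citation, stating that the argument of \cite[Theorem 7.1]{Zhang} applies verbatim to any nef $\QQ$-divisor $L$ with $L|_F\sim K_F$, and that by \cite[Remark 7.2]{Zhang} it is characteristic-free. So there is no paper-internal argument to compare your sketch against at the level of details; what can be compared is whether your outline matches the shape of that external argument.

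Your plan is in the right spirit --- pass to a base-point-free model, split on $\dim\phi_M(\Sigma')$, and in the nontrivial cases feed a Clifford-type bound on the canonical curve $C\subset F$ (of genus $K_F^2+1$, with $\deg(L|_C)=K_F^2=\tfrac12\deg K_C$) back into a filtration on $\Sigma'$. The $\dim=1$ case you handle is fine and matches Lemmas \ref{lem:(1,1) surface1} and \ref{lemma:pg32}. But as you yourself flag, the proposal stops short of an argument in the $\dim=2$ and $\dim=3$ cases: you state that a Clifford estimate on $C$ together with a fibration of a general $S\in|N|$ over $B$ ``must'' produce the coefficient $\tfrac14+\tfrac1{K_F^2}$, without showing how. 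The degree bound $M^3\ge(\deg\phi_M)(h^0-3)$ you write down in the $\dim=3$ case gives at best slope $2$ (as in Lemma \ref{lemma:pg32}), not $\tfrac{4K_F^2}{K_F^2+4}$, so something genuinely different is required there. This is a real gap, not just missing bookkeeping.

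Concretely, what the cited argument does (and what your sketch is missing) is a two-step restriction: first restrict to a general $S$, then run an honest Xiao/Clifford filtration on the induced fibered surface $f'|_S:S\to B$, whose general fibre is the canonical curve $C$ with $g(C)=K_F^2+1$. The point is that on $S$ the divisor $L|_S$ has fibre degree $K_F^2=g(C)-1$, so Clifford on $C$ gives $h^0(C,L|_C)\le\tfrac{K_F^2}{2}+1$; summing this along the filtration on $S$ produces $(L|_S)^2\ge 4\,h^0(S,L|_S)-2(K_F^2+4)$ up to constants, and combining with $L^3\ge(L|_S)^2$ and $h^0(\Sigma,\rounddown{L})\le h^0(S,L|_S)+\tfrac{L^3}{K_F^2}+O(1)$ (the ``vertical'' contribution, where the $\tfrac1{K_F^2}$ appears) yields the stated bound. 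The hypothesis that $F$ is not a $(1,2)$-surface is used exactly where you suspect: to guarantee that the curve $C$ has $g(C)\ge 2$ with $K_F^2\ge 2$, or else that $|K_F|$ is a genuine pencil with the right numerics, so that the Clifford step is non-degenerate. If you want a self-contained proof rather than a pointer, you need to carry out this surface filtration explicitly; the case-split on $\dim\phi_M(\Sigma')$ alone will not produce the sharp constant.
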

	
	\begin{proof}
		The proof is identical to that of \cite[Theorem 7.1]{Zhang}. In fact, the proof \cite[Theorem 7.1]{Zhang} applies verbatim to any nef $\QQ$-divisor $L$ whose restriction on the general fiber coincides with $K_F$, not necessarily $K_{X/B}$. Moreover, by \cite[Remark 7.2]{Zhang}, the whole argument therein works regardless of the characteristic of the base field. Thus we leave the detailed proof to the interested reader.
	\end{proof}
	
	\begin{lemma} \label{lemma:pg32}
		Suppose that $p_g(F) = 3$ and the canonical map of $F$ is generically finite. Then
		$$
		L^3 \ge 2h^0(\Sigma, \rounddown{L}) - 6.
		$$
	\end{lemma}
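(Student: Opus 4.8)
The plan is to follow closely the trichotomy argument used in the proof of Lemma \ref{lem:(1,1) surface1}, exploiting the lucky numerical coincidence that for a minimal surface of general type with $p_g(F)=3$ one has $h^0(F,K_F)=3$, exactly the value of $h^0(F,2K_F)$ that drove the $(1,1)$ case. First I would dispose of the trivial range: if $h^0(\Sigma,\rounddown{L})\le 3$ the right-hand side is non-positive while $L^3\ge 0$ by nefness, so I may assume $h^0(\Sigma,\rounddown{L})\ge 4$. Since the fractional part of $L$ is vertical, one has $\rounddown{L}|_F\sim K_F$, and the restriction sequence
$$0\to H^0(\Sigma,\rounddown{L}-F)\to H^0(\Sigma,\rounddown{L})\to H^0(F,K_F)$$
together with $h^0(F,K_F)=3$ yields $h^0(\Sigma,\rounddown{L}-F)>0$. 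I then take a blow-up $\pi:\Sigma'\to\Sigma$ resolving the base locus so that the movable part $|M|$ of $\pi^*|\rounddown{L}|$ is base point free, set $F'=\pi^*F$ with $\sigma:F'\to F$ the contraction, and split according to $\dim\phi_M(\Sigma')\in\{1,2,3\}$, where $\phi_M$ is the morphism defined by the complete linear system $|M|$. Throughout I use $L^3=(\pi^*L)^3\ge M^3$ and the intermediate nef inequalities $(\pi^*L)^2\cdot M\ge (\pi^*L)\cdot M^2\ge M^3$, exactly as in the cited lemma.

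For the first two cases the estimates are quick once I know $K_F^2\ge 2$, which is forced by Noether's inequality $K_F^2\ge 2p_g(F)-4=2$. If $\dim\phi_M(\Sigma')=1$, then since $h^0(\Sigma',M-F')>0$ the pencil is the one cut out by $f$, so $M\equiv aF'$ with $a\ge h^0(\Sigma',M)-1$, whence $L^3\ge (\pi^*L)^2\cdot M=a\,K_F^2\ge 2\big(h^0(\Sigma,\rounddown{L})-1\big)$. If $\dim\phi_M(\Sigma')=2$, then $M^2\equiv aC'$ with $a\ge h^0(\Sigma',M)-2$, and $C=\sigma(C')$ moves in a pencil on $F$, so $h^0(F,C)\ge 2$ and $C^2\ge 0$. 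The analogue of Fact $2$ that I need here, namely $(K_F\cdot C)\ge 2$, I would obtain directly from the Hodge index theorem and a parity check: if $(K_F\cdot C)=1$ then $1=(K_F\cdot C)^2\ge K_F^2\,C^2\ge 2C^2$ forces $C^2=0$, but then $K_F\cdot C+C^2=1$ is odd, contradicting that it equals $2p_a(C)-2$. Hence $(\pi^*L\cdot C')\ge(K_F\cdot C)\ge 2$ and $L^3\ge(\pi^*L)\cdot M^2\ge 2\big(h^0(\Sigma,\rounddown{L})-2\big)$.

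The remaining and genuinely hardest case is $\dim\phi_M(\Sigma')=3$, where $\phi_M$ is generically finite. Here $M|_{F'}\le\sigma^*K_F$ together with generic finiteness and $p_g(F)=3$ forces $M|_{F'}\sim\sigma^*K_F$, so that $\phi_M|_{F'}$ coincides with the canonical map $\phi_{K_F}\circ\sigma$ of $F$. The crux is the lower bound $\deg\phi_M\ge 2$ for the degree of the generically finite map of \emph{threefolds}: I would deduce it from the fiberwise picture by noting that for general $t$ the restriction $\phi_M|_{F'_t}$ is birational onto its image whenever $\phi_M$ is birational, whereas $\phi_M|_{F'_t}=\phi_{K_F}\circ\sigma$ has degree $\deg\phi_{K_F}\ge 2$ (equality to $1$ would make $F$ birational to its canonical image in $\PP^2$, hence rational, contradicting general type). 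With $\deg\phi_M\ge 2$ in hand and $\deg\phi_M(\Sigma')\ge h^0(\Sigma',M)-3$ by nondegeneracy, I obtain $L^3\ge M^3\ge\deg\phi_M\cdot\deg\phi_M(\Sigma')\ge 2\big(h^0(\Sigma,\rounddown{L})-3\big)$, which is the asserted bound. The main obstacle is precisely this last step: transferring the degree estimate from the canonical map of the surface fiber to the degree of the threefold map $\phi_M$, for which one must argue carefully that birationality of $\phi_M$ restricts to birationality on a general fiber.
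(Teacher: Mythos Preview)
Your proof is correct and follows essentially the same trichotomy as the paper's own argument, including the reduction $M|_{F'}\sim\sigma^*K_F$ and the use of $\deg\phi_{K_F}\ge 2$ in the $3$-dimensional image case. Your lingering worry about the degree transfer is not a gap: the paper (and you) need only the direct inequality $\deg\phi_M\ge\deg(\phi_M|_{F'})$, which holds because for general $F'$ the preimages in $F'$ of a general point of $\phi_M(F')$ are already distinct preimages for $\phi_M$; no delicate ``birationality restricts to birationality'' statement is required.
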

	
	\begin{proof}
		Here the proof is similar to that of Lemma \ref{lem:(1,1) surface1}. Thus we just give a sketch.
		
		To prove this result, we may assume that $h^0(\Sigma, \rounddown{L}) \ge 4$. From the following exact sequence
		$$
		0 \to H^0(\Sigma, \rounddown{L}-F) \to H^0(\Sigma, \rounddown{L}) \to H^0(F, K_F),
		$$
		we know that $h^0(\Sigma, \rounddown{L}-F) > 0$. Choose a blow-up $\pi: \Sigma' \to \Sigma$ such that the movable part $|M|$ of $\pi^*|\rounddown{L}|$ is base point free. Denote by $F' = \pi^*F$.
		
		Consider the morphism
		$$
		\phi_M: \Sigma' \to \PP^{h^0(\Sigma', M)-1}
		$$
		defined by the complete linear system $|M|$. If $\dim \phi_M(\Sigma') = 1$, similarly to the proof of Lemma \ref{lem:(1,1) surface1}, we know that the general fiber of the pencil induced by $\phi_M$ is just $F'$. Thus $M \equiv aF'$, where $a \ge h^0(\Sigma', M) - 1$. It follows that
		$$
		L^3 \ge ((\pi^*L)^2\cdot M) = a (\sigma^*K_F)^2 = 2a \ge 2h^0(\Sigma', M) - 2.
		$$
		
		If $\dim \phi_M(\Sigma') = 2$, then using the same argument as in the proof of Lemma \ref{lem:(1,1) surface1}, we deduce that
		$$
		L^3 \ge ((\pi^*L)\cdot M^2) = a \left((\pi^*L) \cdot C'\right) \ge \left(h^0(\Sigma', M)-2\right) \left((\pi^*L) \cdot C'\right),
		$$
		where $C'$ is a general closed fiber of $\phi_M$.
		
		Again, $C'$ lies in a general fiber $F'$. Thus $((\pi^*L) \cdot C') = ((\pi^*K_F) \cdot C') = (K_F\cdot C)$, where $C = \pi(C')$. Since $C^2 \ge 0$, $p_a(C) \ge 2$ and $K_F^2 \ge 2$, by the Hodge index theorem and the adjunction formula, $(K_F\cdot C) \ge 2$. As a result,
		$$
		L^3 \ge \left((\pi^*L)|_S \cdot M|_S \right) = a \left((\pi^*L) \cdot C'\right) = 2 \left(h^0(\Sigma', M) - 2\right).
		$$
		
		If $\dim \phi_M(\Sigma') = 3$, then $\dim \phi_M(F') = 2$. Since $L|_F \sim K_F$ and the canonical map of $F$ is generically finite but not birational, we deduce that $\deg (\phi_M|_{F'}) \ge 2$. In particular, $\deg \phi_M \ge 2$. Thus
		$$
		L^3 \ge M^3 \ge (\deg \phi_M) \left(h^0(\Sigma', M) - 3\right) \ge 2 \left(h^0(\Sigma', M) - 3\right).
		$$
		The proof is now completed.
	\end{proof}

	\begin{remark}
		Notice that here we do not require that $F$ is specialized from characteristic zero, because we do not need any specific geometry of $F$ which only holds in characteristic zero. 
	\end{remark}
	
	\subsection{Proof of Proposition \ref{prop:pg3}} The proof here is similar to that of Proposition \ref{prop:(1,2) surf b2} but with a slight modification. Thus we give a sketch and leave the detail to the interested reader.
	
	Let $f: X \to B$ be as in Proposition \ref{prop:pg3} with $p_g(F) \ge 3$. Let $\pi: Y \to X$ be a resolution of singularities of $X$, and let $\tilde{f}: Y \to B$ be the induced fibration. As is in \S \ref{interal model}, we may assume that $Y \to X \to B$ extends to an integral model $\CY \to \CX \to \CB \to \CZ$ for a scheme $\CZ$ of finite type over $\ZZ$. Let $A$ be the same ample and effective $\QQ$-divisor as in \S \ref{interal model} which also extends to a relatively ample and effective $\QQ$-divisor $\CA$ on $\CB$. Notice that $\pi^*K_{X/B} + \tilde{f}^*A$ is also semi-ample.
	
	In the following, we adopt almost the same notation as in \S \ref{modpproof}. Let $\tilde{f}: Y_0 \to X_0 \to B_0$ be a general closed fiber of $\CY \to \CX \to \CB$ defined over a field of positive characteristic. We still use $\pi$ to denote the morphism $Y_0 \to X_0$. Let $L = \pi^*K_{X_0/B_0} + \tilde{f}^*A_0$. Apply Lemma \ref{lemma:pg31} and Lemma \ref{lemma:pg32}, and follow exactly the same argument as in \S \ref{modpproof}. We deduce that
	$$
	(\pi^*K_{X_0/B_0} + \tilde{f}^*A_0)^3 \ge \frac{4K_F^2}{4+K_F^2} \deg f_* \omega_{X_0/B_0}
	$$
	whenever $K_F^2 > 1$ and
	$$
	(\pi^*K_{X_0/B_0} + \tilde{f}^*A_0)^3  \ge 2 \deg f_* \omega_{X_0/B_0}
	$$
	when $p_g(F) = 3$ and the canonical map is not composed with a pencil. These two inequalities implies that
	$$
	(K_{X/B} + f^*A)^3 \ge \max \left\{2, \frac{4K^2_F}{K_F^2 + 4}\right\} \deg f_* \omega_{X/B}
	$$
	over characteristic zero. Since $\deg A > 0$ is arbitrary, the proof is completed similar as before.
	
	\subsection{An example} \label{(2,3) example}
	In this subsection, we present an example of a fibration $f$ showing that the inequality in Proposition \ref{prop:pg3} is sharp.
	
	Let $g\ge 0$  be a given integer. Let $Z=\PP^2 \times B$, where $B$ is a smooth curve of genus $g$. Let $p_1$ and $p_2$ be the two natural projections of $Z$. We choose an ample divisor $D_1$ on $\mathbb{P}^2$ and an ample divisor $D_2$ on $B$. We take $D_1$ to be a smooth curve of degree $4$ on $\PP^2$. Then the divisor
	$$
	D=p_1^* D_1 + p_2^* D_2
	$$
	is ample on $Z$. We may further assume that the linear system $|2D|$ is base point free. Thus we can choose a smooth divisor $H \in |2D|$. This induces a double cover $\pi: X \to Z$ branched along $H$. Denote by $f: X\rightarrow B$ the induced fibration from $X$ to $B$. On a general fiber $F$ of $f$, we see that $(p_1 \circ \pi)|_F:F \to \PP^2$ is a double cover over $\mathbb{P}^2$ branched along a smooth curve of degree $8$. Thus $K_F$ is ample, and it is easy to compute that $(K_F^2, p_g(F))=(2,3)$.
	
	From the construction, we know that
	$$
	K_{X/B}=\pi^*(K_{Z/B}+D)=\pi^*\left(p_1^*(K_{\mathbb{P}^2}+D_1)+p_2^*D_2 \right),
	$$
	which is an ample divisor. Thus we have
	$$
	K_{X/B}^3 = 2\left(p_1^*(K_{\PP^2}+D_1)+p_2^*D_2\right)^3=6 \deg D_2.
	$$
	By the double cover formula, we get
	\begin{align*}
	\pi_{*}\omega_{X/B}=\omega_{Z/B} \oplus \CO_Z \left( p_1^*(K_{\mathbb{P}^2}+D_1)+p_2^*D_2 \right).
	\end{align*}
	Applying the projection formula and the fact that $Z$ is a $\PP^2$-bundle over $B$, we have
	$$
	f_*\omega_{X/B} = {p_2}_* (\pi_* \omega_{X/B})=\CO_B (D_2)^{\oplus h^0(\PP^2, K_{\mathbb{P}^2}+D_1)} = \CO_B(D_2)^{\oplus 3}.
	$$
	In particular, $\deg f_*\omega_{X/B} = 3 \deg D_2$ and we have $K_{X/B}^3 = 2 \deg f_* \omega_{X/B}$. Notice that in this example, $\deg f_* \omega_{X/B}>0$ and $f$ is not isotrivial.
	
	Moreover, if we assume that $g(B) = 1$, then we have $K_X^3 = 2 \deg f_* \omega_X$. Notice that in this case, $h^1(F, \CO_F) = 0$. Thus $R^1f_* \omega_X = 0$. By \cite[Lemma 2.4]{Ohno}, $\deg f_* \omega_X = \chi(X, \omega_X)$. As a result, we obtain an irregular $3$-fold $X$ of general type with $K_X^3 = 2\chi(X, \omega_X)$.
	
	\subsection{Theorem \ref{main4} in positive characteristics} 
	
	A natural question is whether we have a slope inequality for $3$-fold fibration over curve in positive characteristics. Here we remark that \eqref{slope3} actually holds regardless of the field characteristic. We have the following theorem.
	
	\begin{theorem} \label{slopecharp}
		Let $f: X \to B$ be a fibration defined over an algebraically closed field $k$ of characteristic $p > 0$ from a $3$-fold $X$ to a smooth curve $B$ with general fiber $F$ a smooth surface of general type. Suppose that $X$ has at worst terminal singularities and $K_{X/B}$ is nef. Then
		$$
		K_{X/B}^3 \ge \left(\frac{4K_F^2}{K_F^2 + 4}\right) \deg f_* \omega_{X/B}.
		$$
	\end{theorem}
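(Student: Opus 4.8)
The plan is to transplant the Frobenius base change machinery of \S\ref{modpproof} directly to $k$: since we already work in characteristic $p>0$, there is no reduction mod $p$ to perform. We may assume $\deg f_*\omega_{X/B}>0$, for otherwise the right-hand side vanishes and the inequality is immediate from the nefness of $K_{X/B}$. Fix an ample and effective $\QQ$-divisor $A$ on $B$, write $b=g(B)$, and set $M=K_{X/B}+f^*A$, which is nef and big. Choosing a resolution $\pi\colon Y\to X$, for each $e$ I would form the $e$-th Frobenius base change $X_e=X\times_{F^e}B$, normalize and resolve as in the diagram of \S\ref{modpproof} to obtain $f''_e\colon X''_e\to B$, pull $M$ back to $M''_e=\varepsilon^*M$, and put $L''_e=M''_e+2(b-1)F$. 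Exactly as there, $L''_e$ is nef with $\rounddown{L''_e}|_F\sim K_F$, one has $(L''_e)^3=p^eM^3+6(b-1)$ and $\deg f_{e*}\omega_{X_e/B}=p^e\deg f_*\omega_{X/B}$, and for $e\gg0$
\[
h^0(X''_e,\rounddown{L''_e})\ge\deg f_{e*}\omega_{X_e/B}-2(b-1)=p^e\deg f_*\omega_{X/B}-2(b-1),
\]
where the first inequality is the Riemann--Roch bound $h^0(B,E)\ge\deg E-\rank(E)(b-1)$ and so requires no semi-positivity of $f_*\omega_{X/B}$. Everything then reduces to a \emph{slope-like inequality} of the shape $h^0(\Sigma,\rounddown{L})\le(\tfrac14+\tfrac1{K_F^2})L^3+C$ for a nef $\QQ$-divisor $L$ with $L|_F\sim K_F$ on such a resolved base change $\Sigma$, with $C$ a constant independent of $e$; feeding in $L=L''_e$, dividing by $p^e$, letting $e\to\infty$ and then $\deg A\to0^+$ yields the theorem.

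For $F$ not a $(1,2)$-surface this slope-like inequality is precisely Lemma \ref{lemma:pg31}, which---as recorded in the remark after Lemma \ref{lemma:pg32}---holds in arbitrary characteristic and needs no specialization from characteristic zero. Rewriting it as
\[
L^3\ge\frac{4K_F^2}{K_F^2+4}\,h^0(\Sigma,\rounddown{L})-2K_F^2,
\]
the error term $2K_F^2$ is constant in $e$ and is annihilated in the limit, so the procedure of the first paragraph produces exactly $K_{X/B}^3\ge\frac{4K_F^2}{K_F^2+4}\deg f_*\omega_{X/B}$. This settles every fibre type with $K_F^2\ge2$ as well as the $(1,1)$-surface.

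The one fibre type outside the scope of Lemma \ref{lemma:pg31} is the $(1,2)$-surface, and this is where genuinely new input is needed. Here $\frac{4K_F^2}{K_F^2+4}=\frac45$, so it is enough to establish the \emph{stronger} bound $L^3\ge h^0(\Sigma,\rounddown{L})-2$. I would obtain it by repeating the case analysis of Lemma \ref{lem:(1,1) surface1} and Lemma \ref{lemma:pg32} for a nef $L$ with $L|_F\sim K_F$. Passing to the base-point-free movable part $|M|$ of $\pi^*\rounddown{L}$ on a blow-up $\pi\colon\Sigma'\to\Sigma$ and examining $\phi_M\colon\Sigma'\to\PP^{h^0(\Sigma',M)-1}$: if $\dim\phi_M(\Sigma')=1$ then the general fibre is $F'$ and $M\equiv aF'$ with $a\ge h^0(\Sigma,\rounddown{L})-1$, giving $L^3\ge a(\sigma^*K_F)^2=aK_F^2\ge h^0(\Sigma,\rounddown{L})-1$; if $\dim\phi_M(\Sigma')=2$ then $M^2\equiv aC'$ with $a\ge h^0(\Sigma,\rounddown{L})-2$ and $C'$ lying in a general fibre $F'$, whence $L^3\ge a\,((\pi^*L)\cdot C')=a\,(K_F\cdot C)\ge h^0(\Sigma,\rounddown{L})-2$ by the ampleness of $K_F$. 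The decisive point is that the third case $\dim\phi_M(\Sigma')=3$ is \emph{vacuous} for a $(1,2)$-surface: since $h^0(F,K_F)=2$ the canonical map of $F$ is a genus-two pencil, so $\phi_M|_{F'}$ has at most one-dimensional image and the fibres cannot map finitely onto a surface.

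The heaviest part to write out carefully is the bookkeeping of the first paragraph---tracking the discrepancies of the normalization $\varepsilon'$ and the resolution $\varepsilon''$, checking that these modifications touch only the singularities of singular fibres so that $\rounddown{L''_e}|_F\sim K_F$ and the intersection numbers scale cleanly by $p^e$, and verifying $\deg f_{e*}\omega_{X_e/B}=p^e\deg f_*\omega_{X/B}$ via flat base change for the relative dualizing sheaf---but all of this is identical to \S\ref{modpproof}. The genuine mathematical obstacle is the $(1,2)$ case, and within it the vanishing of the top-dimensional image: it is precisely this feature that allows the single uniform formula \eqref{slope3} to cover \emph{every} fibre type in positive characteristic, even though the hypothesis excluded in Lemma \ref{lemma:pg31} might at first suggest that $(1,2)$-surfaces demand a different bound.
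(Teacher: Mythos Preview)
Your proposal is correct and follows essentially the same route as the paper: Frobenius base change plus the limiting argument, fed by Lemma \ref{lemma:pg31} in the non-$(1,2)$ case and by the auxiliary bound $L^3\ge h^0(\Sigma,\rounddown{L})-2$ in the $(1,2)$ case (which the paper records as Lemma \ref{lemma:charp (1,2)} with exactly your two-case proof). Two small remarks: the paper points out that since $K_{X/B}$ is already nef in characteristic $p$, the perturbation by $A$ is unnecessary here and one may run the Frobenius argument directly on $K_{X/B}$; and in your $\dim\phi_M(\Sigma')=2$ step for the $(1,2)$ fibre, $K_F$ need not be ample on a minimal surface, so the correct justification for $(K_F\cdot C)\ge 1$ is that $C$ moves in a family (hence is not contracted by the nef and big $K_F$), as the paper phrases it.
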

	
	\begin{proof}
		Applying Lemma \ref{lemma:pg31} for the Frobenius pullback of $K_{X/B}$ and the previous limiting argument, if $F$ is not a $(1, 2)$-surface, then Theorem \ref{slopecharp} follows in exactly the same way as before. Notice that it is even simpler in the current setting, because we no longer need to perturb $K_{X/B}$ by a divisor $A$ on $B$.
		
		We are left the case when $F$ is a $(1, 2)$-surface. In this case, we can use Lemma \ref{lemma:charp (1,2)} (which we will prove in the following) to proceed the proof using the same Frobenius limiting argument, and we obtain that
		$$
		K_{X/B}^3 \ge \deg f_* \omega_{X/B}.
		$$ 
		Since the proof is just identical as before, we leave it to the interested readers.
	\end{proof}
	
	\begin{lemma} \label{lemma:charp (1,2)}
		Let $f: \Sigma \to B$ be a fibration defined over an algebraically closed field $k$ from a smooth $3$-fold $\Sigma$ to a smooth curve $B$ with general fiber $F$ a $(1, 2)$-surface. Let $L$ be a nef and big $\QQ$-divisor on $\Sigma$ such that $L|_F \sim K_F$. Then we have
		$$
		L^3 \ge h^0(\Sigma, \rounddown{L}) - 2.
		$$
	\end{lemma}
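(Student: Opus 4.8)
The plan is to run the same blow-up-and-case-analysis scheme used in Lemma \ref{lem:(1,1) surface1} and Lemma \ref{lemma:pg32}, the crucial simplification here being that for a $(1,2)$-surface the canonical map is composed with a pencil. First I would dispose of the trivial range: since $L$ is nef, $L^3 \ge 0$, so the asserted inequality holds automatically when $h^0(\Sigma, \rounddown{L}) \le 2$, and I may assume $h^0(\Sigma, \rounddown{L}) \ge 3$. Because $L|_F \sim K_F$ is integral on a general fiber, the fractional part $\{L\}$ must be vertical with respect to $f$, hence $\rounddown{L}|_F \sim K_F$; the restriction sequence
$$
0 \to H^0(\Sigma, \rounddown{L} - F) \to H^0(\Sigma, \rounddown{L}) \to H^0(F, K_F)
$$
together with $h^0(F, K_F) = p_g(F) = 2$ then forces $h^0(\Sigma, \rounddown{L} - F) > 0$.

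Next I would choose a blow-up $\pi : \Sigma' \to \Sigma$ making the movable part $|M|$ of $\pi^*|\rounddown{L}|$ base point free, set $F' = \pi^* F$ with contraction $\sigma : F' \to F$, and analyze $\phi_M : \Sigma' \to \PP^{h^0(\Sigma', M)-1}$ according to the dimension of its image, using throughout that $\pi^* L$ is nef with $\pi^* L - M$ effective, so that $L^3 = (\pi^* L)^3 \ge (\pi^* L)^2 \cdot M \ge (\pi^* L) \cdot M^2$. If $\dim \phi_M(\Sigma') = 1$, then $h^0(\Sigma', M - F') > 0$ identifies the general fiber of the induced pencil with $F'$, so $M \equiv a F'$ with $a \ge h^0(\Sigma', M) - 1$, and I would conclude $L^3 \ge (\pi^* L)^2 \cdot M = a (\sigma^* K_F)^2 = a K_F^2 = a \ge h^0(\Sigma, \rounddown{L}) - 1$, using $K_F^2 = 1$. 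If $\dim \phi_M(\Sigma') = 2$, then $M^2 \equiv a C'$ for a general fiber $C'$ of $\phi_M$ with $a \ge h^0(\Sigma', M) - 2$ (non-degenerate surface in projective space); since $C'$ lies in a general fiber $F'$, its image $C = \sigma(C')$ is a curve, and $(\pi^* L \cdot C') = (K_F \cdot C) \ge 1$ because $K_F$ is ample, giving $L^3 \ge a (\pi^* L \cdot C') \ge h^0(\Sigma, \rounddown{L}) - 2$.

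The step I expect to be the crux is ruling out $\dim \phi_M(\Sigma') = 3$, which is exactly where the special geometry of $(1,2)$-surfaces must enter. Here $\phi_M|_{F'}$ is induced by a sublinear system of $|\sigma^* K_F|$, and since $|K_F|$ on a $(1,2)$-surface is composed with a pencil its image is at most a curve, so $\dim \phi_M(F') \le 1$ for every general fiber. As $\phi_M(\Sigma') = \overline{\bigcup_{t \in B} \phi_M(F'_t)}$ is swept out by a one-parameter family of curves, its dimension is at most $2$, so the generically finite case cannot occur. Combining the two surviving cases yields $L^3 \ge h^0(\Sigma, \rounddown{L}) - 2$ unconditionally. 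I would finally remark that the argument uses only the nefness of $L$ and the ampleness of $K_F$ and no characteristic-specific input, which is precisely what is needed to feed it into the Frobenius limiting argument in the proof of Theorem \ref{slopecharp}.
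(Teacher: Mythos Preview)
Your proposal is correct and matches the paper's proof essentially line by line: the same reduction to $h^0(\Sigma,\rounddown{L})\ge 3$, the same blow-up and case split on $\dim\phi_M(\Sigma')$, and the same observation that $p_g(F)=2$ forces $\dim\phi_M(\Sigma')\le 2$. One small imprecision to fix: on a \emph{minimal} $(1,2)$-surface $K_F$ is only nef and big, not necessarily ample, so justify $(K_F\cdot C)\ge 1$ instead by noting that $C$ moves (hence $C^2\ge 0$) and applying the Hodge index theorem, which is exactly the paper's phrasing ``$C$ gives a family of curves on $F$''.
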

	
	\begin{proof}
		The proof is very similar to that of Lemma \ref{lem:(1,1) surface1} and \ref{lemma:pg32}. We just give a sketch here.
		
		We may assume that $h^0(\Sigma, \rounddown{L}) \ge 3$. Since $L|_F \sim K_F$ and $p_g(F) = 2$, we deduce that $h^0(\Sigma, \rounddown{L}-F) > 0$. Choose a blow-up $\pi: \Sigma' \to \Sigma$ such that the movable part $|M|$ of $\pi^*|\rounddown{L}|$ is base point free. Denote $F' = \pi^*F$.
		
		Consider the morphism
		$$
		\phi_M: \Sigma' \to \PP^{h^0(\Sigma', M) - 1}.
		$$
		Here $1 \le \dim \phi_M(\Sigma) \le 2$. If $\dim \phi_M(\Sigma) = 1$, we just apply the proof of Lemma \ref{lemma:pg32} for this case verbatim to deduce that $M \equiv aF'$ for some integer $a \ge h^0(\Sigma', M) - 1$ and thus
		$$
		L^3 \ge aK_F^2 = h^0(\Sigma', M) - 1.
		$$
		If $\dim \phi_M(\Sigma) = 2$, then the proof of Lemma \ref{lemma:pg32} for this case also applies here, and we deduce that
		$$
		L^3 \ge \left(h^0(\Sigma, M) - 2\right)(K_F\cdot C),
		$$
		where $C$ is the image of a general fiber of $\phi_M$ under $\pi$. Since $C$ gives a family of curves on $F$, we deduce that $(K_F\cdot C) \ge 1$. Thus he proof is completed.
	\end{proof}
	
	\begin{remark}
		Similarly to the characteristic zero case, it is an interesting question whether the $f$-nefness of $K_X$ implies the nefness of $K_{X/B}$ in characteristic $p > 0$. In fact, under the setting of Theorem \ref{slopecharp}, Patakfalvi \cite[Theorem 1.1]{Patakfalvi} has given a positive answer to this question when $(p, r) = 1$, where $r$ is the index of $K_X$.
	\end{remark}

\section{A Noether-Severi type inequality}

Our main result in this section is the following.

\begin{theorem} [Theorem \ref{main5}]
	Let $X$ be an irregular and minimal $3$-fold of general type over an algebraically closed field of characteristic zero. Suppose that the Albanese fiber of $X$ is not a $(1, 2)$-surface. Then we have the following sharp inequality:
	$$
	K^3_X \ge 2 \chi(X, \omega_X).
	$$
	Moreover, if the equality holds, then $h^1(X, \CO_X) = 1$.
\end{theorem}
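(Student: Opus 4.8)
The plan is to reduce the statement to the Albanese fibration and feed it into the sharp slope bound of Theorem \ref{main3}. First I would dispose of the cases $\alb\dim(X)\in\{2,3\}$: here the stronger Severi inequality \eqref{severi} for $d=2,3$ is already known (\cite{Barja_Severi,Zhang}) and yields $K_X^3\ge 2\cdot d!\,\chi(X,\omega_X)\ge 2\chi(X,\omega_X)$, so there is nothing to prove. Thus I may assume $\alb\dim(X)=1$. Let $a\colon X\to \mathrm{Alb}(X)$ be the Albanese map; its image is a curve, and taking the Stein factorization of $X\to a(X)$ produces a fibration $f\colon X\to B$ onto a smooth curve $B$ of genus $b$. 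A key preliminary observation is that $b=q(X):=h^1(X,\CO_X)$: the pullback $f^*\colon H^0(B,\Omega_B^1)\hookrightarrow H^0(X,\Omega_X^1)$ gives $b\le q(X)$, while the surjection $\mathrm{Jac}(B)\twoheadrightarrow \mathrm{Alb}(X)$ (coming from the fact that $a(X)$ generates $\mathrm{Alb}(X)$) gives $b\ge q(X)$. The general fiber $F$ of $f$ is the Albanese fiber, a minimal surface of general type which by hypothesis is not a $(1,2)$-surface; and $f$ is relatively minimal in the sense of \S\ref{setting}, since $X$ is terminal and $K_X$ is nef.

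Next I would set up the two numerical translations between the ``absolute'' invariants $K_X^3,\chi(X,\omega_X)$ and the ``relative'' invariants $K_{X/B}^3,\deg f_*\omega_{X/B}$. Writing $K_{X/B}=K_X-f^*K_B$ and using $(f^*K_B)^2\equiv 0$ together with $K_X^2\cdot F=K_F^2$ (the general fiber avoids the isolated singularities of $X$, so adjunction gives $K_X|_F=K_F$), one obtains
\begin{equation*}
K_X^3=K_{X/B}^3+6(b-1)K_F^2.
\end{equation*}
For the Euler characteristic I would pass to a resolution $\pi\colon Y\to X$, so that $\chi(X,\omega_X)=\chi(Y,\omega_Y)$ and $\deg f_*\omega_{X/B}=\deg \tilde f_*\omega_{Y/B}$ (Grauert--Riemenschneider applies, the terminal singularities of $X$ being rational), and then compute $\chi(Y,\omega_Y)=-\chi(Y,\CO_Y)$ through the Leray spectral sequence of $\tilde f\colon Y\to B$. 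Using $\tilde f_*\CO_Y=\CO_B$, the relative duality identifications of $R^2\tilde f_*\CO_Y$ and $R^1\tilde f_*\CO_Y$ with $(\tilde f_*\omega_{Y/B})^\vee$ and $(R^1\tilde f_*\omega_{Y/B})^\vee$, and Riemann--Roch on $B$, this yields
\begin{equation*}
\chi(X,\omega_X)=\deg f_*\omega_{X/B}-\deg R^1\tilde f_*\omega_{Y/B}+\chi(F,\CO_F)(b-1).
\end{equation*}
The Fujita--Koll\'ar semipositivity of $R^1\tilde f_*\omega_{Y/B}$ over the curve $B$ gives $\deg R^1\tilde f_*\omega_{Y/B}\ge 0$, hence
\begin{equation*}
\chi(X,\omega_X)\le \deg f_*\omega_{X/B}+\chi(F,\CO_F)(b-1).
\end{equation*}

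Finally I would combine these with the slope inequality $K_{X/B}^3\ge 2\deg f_*\omega_{X/B}$ of Theorem \ref{main3}, which applies precisely because $F$ is not a $(1,2)$-surface. Substituting the two displays gives
\begin{equation*}
K_X^3-2\chi(X,\omega_X)\ge 2(b-1)\bigl(3K_F^2-\chi(F,\CO_F)\bigr).
\end{equation*}
Since $b\ge 1$, it remains to note the elementary surface-geography inequality $3K_F^2\ge\chi(F,\CO_F)$, which follows from the Noether inequality $\chi(F,\CO_F)\le \tfrac12 K_F^2+3$ together with a direct check of the boundary case $K_F^2=1$; crucially, equality $3K_F^2=\chi(F,\CO_F)$ holds \emph{only} for $(1,2)$-surfaces, which are excluded. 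This proves $K_X^3\ge 2\chi(X,\omega_X)$. For the equality clause, the chain of inequalities forces $(b-1)\bigl(3K_F^2-\chi(F,\CO_F)\bigr)=0$; as $3K_F^2>\chi(F,\CO_F)$ strictly for every non-$(1,2)$ fiber, equality forces $b=1$, and therefore $h^1(X,\CO_X)=q(X)=b=1$.

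Granting Theorem \ref{main3}, the main obstacle is the bookkeeping when the Albanese fiber is itself irregular: then $R^1\tilde f_*\omega_{Y/B}\neq 0$ and must be controlled, which is exactly where the semipositivity input and the identity $q(X)=g(B)$ enter to convert $\deg f_*\omega_{X/B}$ into $\chi(X,\omega_X)$ cleanly. A secondary point needing care is that $X$ is only terminal, so the cohomological computations must be transported to a resolution; this is routine given the rationality of the singularities. Everything else is formal once the sharp slope bound is in hand.
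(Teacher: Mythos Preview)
Your proposal is correct and follows essentially the same route as the paper: reduce to Albanese dimension one, apply Theorem \ref{main3} to the Stein factorization $f\colon X\to B$, convert $K_{X/B}^3$ and $\deg f_*\omega_{X/B}$ into $K_X^3$ and $\chi(X,\omega_X)$, and finish with the surface-geography fact that $3K_F^2>\chi(F,\CO_F)$ unless $F$ is a $(1,2)$-surface. The only differences are cosmetic: the paper cites \cite[Lemma 2.4, 2.5]{Ohno} for the inequality $\deg f_*\omega_{X/B}\ge \chi(X,\omega_X)-\chi(F,\omega_F)(b-1)$, whereas you re-derive it via the Leray spectral sequence and semipositivity of $R^1\tilde f_*\omega_{Y/B}$; and the paper only records $g(B)\ge h^1(X,\CO_X)$ (which already suffices for the equality clause), whereas you prove the sharper identity $g(B)=q(X)$.
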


\begin{proof}
	If the Albanese dimension of $X$ is at least two, then we have already known that (e.g. \cite[Theorem 1.1]{Zhang_Severi} and \cite[\S 6]{Zhang_small_volume})
	$$
	K_X^3 \ge 4 \chi(X, \omega_X).
	$$
	In the following, we may assume that the Albanese dimension of $X$ is one.
	Let 
	$$
	f: X \to B
	$$
	be the Stein factorization of the Albanese map of $X$. Thus $g(B) \ge h^1(X, \CO_X) > 0$, and a general fiber $F$ of $f$ is a smooth minimal surface of general type, not a $(1, 2)$-surface by our assumption. Clearly, $f$ is relatively minimal. By Theorem \ref{main3}, we know that
	$$
	K_{X/B}^3 \ge 2 \deg f_* \omega_{X/B}.
	$$
	On the other hand, by \cite[Lemma 2.4, 2.5]{Ohno},
	$$
	\deg f_* \omega_{X/B} \ge \chi(X, \omega_X) - \chi(B, \omega_B) \chi(F, \omega_F).
	$$
	In the meantime, since $K_{X/B} = K_X - f^*K_B$, we have
	$$
	K_{X/B}^3 = K_X^3 - 6K_F^2(g(B)-1) = K_X^3 - 6K_F^2 \chi(B, \omega_B).
	$$
	Combine all above together. It follows that
	$$
	K_X^3 \ge 2\chi(X, \omega_X) + 2\left(3K_F^2 - \chi(F, \omega_F) \right) \chi(B, \omega_B).
	$$
	It is easy to check that $3K_F^2 - \chi(F, \omega_F) > 0$ once $F$ is not a $(1, 2)$-surface. Thus 
	$$
	K_X^3 \ge 2\chi(X, \omega_X).
	$$
	The sharpness of this inequality has been shown in \S \ref{(2,3) example}. Moreover, if the equality holds, we have $\chi(B, \omega_B) = 0$. Thus $g(B) = 1$, which forces $h^1(X, \CO_X) = 1$. 
	
	The whole proof is completed.
\end{proof}

	\bibliography{slope3fold}
	\bibliographystyle{amsplain}
	
\end{document}